\newtheorem{theorem}{Theorem}[section]
\newtheorem{lemma}[theorem]{Lemma}
\newtheorem{proposition}[theorem]{Proposition}
\newtheorem{definition}[theorem]{Definition}
\newtheorem{corollary}[theorem]{Corollary}
\newtheorem{prop}{Proposition}[section]
\newtheorem{remark}[prop]{Remark}
\makeatletter \@addtoreset{equation}{section} \makeatother
\def\ddt{\frac{d}{dt}}
\def\ppt{\frac{\partial}{\partial t}}
\def\RR{{\mathrm R}}
\def \2R{{\hat{\RR}}}
\def\WW{{\mathrm W}}
\def\Rc{{\mathrm {Rc}}}
\def\oRc{{\overline{\mathrm {Rc}}}}
\def\SS{{\mathrm S}}
\def\He{\mathrm {Hess}}
\def\id{\mathrm{Id}}
\def\lie{\mathcal{L}}
\def\xi{\partial_{x_i}}
\def\yi{\partial_{y_i}}
\def\tr{\text{tr}}
\begin{document}

\title{K\"{a}hler Gradient Ricci Solitons with Large Symmetry}



\author{Hung Tran}



\date{\today}



\begin{abstract} Let $(M, g, J, f)$ be an irreducible non-trivial K\"{a}hler gradient Ricci soliton of real dimension $2n$. We show that its group of isometries is of dimension at most $n^2$ and the case of equality is characterized. As a consequence, our framework shows the uniqueness of $U(n)$-invariant K\"{a}hler gradient Ricci solitons constructed earlier. There are corollaries regarding the groups of automorphisms or affine transformations and a general version for almost Hermitian GRS. The approach is based on a connection to the geometry of an almost contact metric structure.   
	
\end{abstract}
\maketitle

\section{Introduction}
Let $(M^m, g)$ be an orientable connected Riemannian manifold. In \cite{H3}, R. Hamilton introduced the Ricci flow equation, for $\Rc$ denoting the Ricci curvature,
\begin{equation}
	\label{rf}
	\ppt g(t)= -2 \Rc(t). \end{equation}
The theory has been utilized to solve fundamental problems; see \cite{perelman1, perelman2, perelman3, bohmwilking, bs091, bs072}. 
As a weakly parabolic system, it generically develops singularities and the study of such models is essential in any potential applications. Gradient Ricci solitons (GRS) are self-similar solutions to (\ref{rf}) and arise naturally in that context. Consequently, there have been numerous efforts to study them; see \cite{Hsurface, chow, perelman2, naber07, munse09, caozhou10, pewy10, b12rot, brendle14rotahigh, caotran1, LNW16fourPIC, MW17compact} and references therein.

A GRS $(M, g, f)$ is a Riemannian manifold such that, for a constant $\lambda$,  
\begin{equation}
	\label{grs}
	\Rc+\text{Hess}{f}=\lambda g. 
\end{equation}
 It is called shrinking, steady, or expanding depending on the sign of $\lambda$ being positive, zero, or negative. Clearly, any Einstein manifold is an example with $\text{Hess}{f}\equiv 0$ and $\lambda$ being the Einstein constant. Moreover, the Gaussian soliton refers to $(\mathbb{R}^{m}, g_{Euc}, \lambda \frac{|x|^2}{2})$ for the Euclidean metric $g_{Euc}$. 
 Naturally a soliton is called rigid if it is isometric to a quotient of $N^{m-k}\times \mathbb{R}^k$ with $f= \frac{|x|^2}{2}$ on the Euclidean factor. Furthermore, \cite{PW09grsym} shows that if the metric of a  GRS is reducible, the soliton structure is reducible accordingly. Thus, a soliton is called non-trivial (or non-rigid) if at least a factor in its de Rham decomposition is non-Einstein.
 
 
 Many non-trivial examples are K\"ahler and the topic receives tremendous interest; see, for examples, \cite{tian1, WZ04toric, caohd09, CZ12Kahler, MW15topo, CS2018classification, CF16conical, Kotschwar18Kahlercone, CD2020expanding, DZ2020rigidity}. In particular, significant efforts lead to the classification of all K\"{a}hler Ricci shrinker surfaces \cite{CDSexpandshriking19, CCD22finite, BCCD22KahlerRicci, LW23}. In an even dimension, $(M^{2n}, g, J)$ is called an almost Hermitian manifold if $g$ is compatible with an almost complex structure $J: TM\mapsto TM$, $J^2=-\id$. The associated K\"{a}hler two-form is defined as, for tangential vector fields $X$ and $Y$, 
 \[\omega(X, Y)=g(X, JY).\]
 Consequently, the structure is called almost K\"{a}hler if $\omega$ is closed and K\"{a}hler if, additionally, $J$ is a complex structure. On a K\"{a}hler manifold, the popular convention is to write $()^n$ for complex dimension $n$. A K\"{a}hler GRS $(M, g, J, f)$ is simultaneously a K\"{a}hler manifold and a gradient Ricci soliton.

 In this paper, we propose an investigation based on the group of symmetry. An isometry on $(M^m, g)$ is a diffeomorphism preserving the metric $g$. The dimension of the group of isometries is at most $\frac{m(m+1)}{2}$ \cite{KNvolumeI96} and it is attained if and only if the manifold is isometric to one of the following models of constant curvature: the round spheres, the real projective space, the Euclidean space, or the hyperbolic space. 



For an almost Hermitian manifold of real dimension $2n$, using the terminology of \cite{KNauto57}, an automorphism is an isometry which preserves $J$. S. Tanno \cite{tanno69Hermitian} showed that the maximal dimension of the automorphism group is $n(n+2)$ and  the maximal case is characterized to be homothetic to one of the followings: the unitary space $\mathbb{C}^n=\mathbb{R}^{2n}$ with $g_{Euc}$, a complex projective space $\mathbb{CP}^n$ with a Fubini-Study metric, or an open ball $B^n_{\mathbb{C}}$ with a Bergman metric. These models play an important role in our work. 
\begin{definition}
	Let $\mathbb{N}(k)$ be a simply connected K\"{a}hler manifold with constant holomorphic sectional curvature and normalized Ricci curvature $k$. 
\end{definition}

 From the above discussion, it is immediate that a Gaussian soliton on $\mathbb{C}^n$ has $n(2n+1)$ isometries and $n(n+2)$ automorphisms. Also, P. Petersen and W. Wylie showed that a homogeneous GRS must be rigid \cite{PW09grsym}. It is, thus, interesting to ponder the next best scenario. 
One observes that, many non-trivial K\"{a}hler GRS's, see \cite{caohd96, caohd97limits, koi90, CV96, fik03}, are $U(n)$-invariant and $\text{dim}(U(n))=n^2$. According to \cite{DW11coho}, their metrics all belong to the following cohomogeneity one structure: 
 
\textit{ \textbf{An Ansatz:} Let ${N}^{n-1}(k)$ be a K\"{a}hler-Einstein manifold with complex dimension $n-1$ and $\Rc_N=k \id$, $I$ be an interval, and functions $H, F: I\mapsto \mathbb{R}^+$. $(P, g_t)$ is a Riemann submersion of a line or circle bundle with coordinate $z$ over $({N}, F^2 g_N)$ and a bundle projection $\pi: P\mapsto \mathbb{N}$. $\eta$ is the one-form dual of $\partial_z$ such that $d\eta=q\pi^\ast \omega_{\mathbb{N}}$ for $q\in \mathbb{Z}$. If ${N}=\mathbb{CP}^{n-1}$ and $q=1$ then $P=\mathbb{S}^{2n-1}$ and one recovers the Hopf fibration. If ${N}=\mathbb{N}\neq \mathbb{CP}^{n-1}$, then one assumes the bundle is trivial. The metric $g$ and almost complex structure $J$ on $I\times P$ is given by} 
     \begin{align}
     	 \label{ansatz}
     	g &= dt^2+g_t=dt^2+ H^2(t) \eta\otimes \eta + F^2(t)\pi^\ast g_\mathbb{N},\\
     	J &= \partial_t \otimes H\eta - \frac{1}{H}\partial_z\otimes dt +\pi^\ast J_N.\nonumber
     \end{align}

 Our first result asserts the uniqueness. 
 

\begin{theorem}
	\label{main0}
	Let $(M^{n}, g, J, f, \lambda)$ be an irreducible non-trivial complete K\"{a}hler GRS. Its group of isometries is of dimension at most $n^2$ and equality happens iff it is smoothly constructed by the ansatz \ref{ansatz} for  ${N}=\mathbb{N}(k)$, $q=1$, and $f$ can be chosen as a function of $t$. If $\lambda\geq 0$ then $\mathbb{N}=\mathbb{CP}^{n-1}$.
\end{theorem}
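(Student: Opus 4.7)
The plan is to reduce the dimension bound on $\mathrm{Iso}(M,g)$ to Tanno's theorem on automorphism groups of almost contact metric manifolds, applied to a regular level hypersurface of $f$. The first step is to show that the identity component $G_0 \subset \mathrm{Iso}(M,g)$ acts by holomorphic isometries preserving $f$. For the holomorphicity, I would invoke that an irreducible Kähler manifold which is not of constant holomorphic sectional curvature admits an essentially unique parallel compatible complex structure up to sign; constant holomorphic sectional curvature is excluded here by non-triviality, since the complex space forms $\mathbb{N}(k)$ are Einstein. For the preservation of $f$, the standard GRS rigidity gives $f\circ\psi = f + c$ for any isometry $\psi$ and some constant $c$, and the absence of a Euclidean factor (from irreducibility together with non-triviality) forces $c = 0$. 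Thus both $\nabla f$ and the Killing vector field $V := J\nabla f$ are $G_0$-invariant.

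Next, let $\Sigma = f^{-1}(c)$ for a regular value $c$, a real hypersurface of dimension $2n-1$ on which $G_0$ acts. The ambient Kähler data equip $\Sigma$ with an almost contact metric structure: the Reeb direction is $\zeta := V/|V|$ (tangent to $\Sigma$ since $V$ is orthogonal to $\nabla f$), the one-form is $\eta := g(\zeta,\cdot)|_{T\Sigma}$, and the endomorphism is the restriction of $J$ to $\ker\eta$. Since $G_0$ preserves $g$, $J$, and $f$, it acts by almost contact automorphisms on $\Sigma$ with discrete kernel, because an ambient isometry that fixes $\Sigma$ pointwise and preserves a normal direction must be the identity. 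Tanno's upper bound for the automorphism group of a connected almost contact metric manifold of dimension $2m+1$, namely $(m+1)^2$, then yields $\dim G \leq n^2$ on taking $m = n-1$.

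For the equality case, the classification in Tanno's result forces $\Sigma$ with its almost contact metric structure to be a maximally symmetric model, hence one of the standard $\mathbb{S}^1$ or $\mathbb{R}$ bundles over a base $\mathbb{N}(k)$ with connection of integer charge $q = 1$. Because $G_0$ already acts transitively on each regular level, on $M$ it acts with cohomogeneity one. Choosing $t$ as arclength along the flow of $\nabla f/|\nabla f|$ yields a product chart $I \times P$, and the $G_0$-invariance together with the Kähler identity that makes $V$ proportional to $\partial_z$ forces the metric and almost complex structure into exactly the doubly warped form of the Ansatz, while the soliton equation reduces to an ODE system in $t$ alone, giving $f = f(t)$. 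The restriction $\mathbb{N} = \mathbb{CP}^{n-1}$ when $\lambda \geq 0$ then follows from smoothness at the closure: in the shrinking and steady cases the warped structure must close off along a zero of $H$, and this is geometrically consistent only with the Hopf fibration over $\mathbb{CP}^{n-1}$.

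The principal technical obstacle is the equality analysis. Two subtleties stand out. First, establishing that the almost contact metric structure induced on $\Sigma$ from the ambient Kähler GRS falls into the hypothesis class for Tanno's equality characterization requires unwinding how the soliton equation constrains $|\nabla f|$, the shape operator of $\Sigma$, and the interaction between $V$ and the second fundamental form. Second, the smoothness analysis at a critical orbit of $f$ must simultaneously pin down the boundary behavior of the warping functions $H$ and $F$ and force the topological identification of $P \to \mathbb{N}(k)$ with the Hopf fibration in the $\lambda \geq 0$ regime, blending bundle topology with the global ODE analysis of the soliton equation.
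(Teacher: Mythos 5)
Your overall strategy --- push the symmetry group down to a regular level hypersurface, read off an almost contact metric structure there, and invoke Tanno's $(m+1)^2$ bound and classification --- is exactly the paper's route (its Theorem \ref{inducegroup}, Corollary \ref{maxdim}, and Theorem \ref{tannoclassificationalmost}). However, there is a genuine gap at the very first reduction, in the steady case. You assert that $f\circ\psi = f + c$ and that ``the absence of a Euclidean factor forces $c=0$.'' Irreducibility (via Petersen--Wylie: $\nabla(Xf)$ is parallel) only gives that $Xf$ is \emph{constant} for each Killing field $X$; it does not make that constant zero. When $\lambda\neq 0$ one can kill the constant using the soliton identity $\SS+|\nabla f|^2-2\lambda f=\mathrm{const}$ together with the lower bound on $\SS$, which is essentially what the paper does. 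But when $\lambda=0$ that identity no longer involves $f$, and the map $\psi\mapsto c_\psi$ is merely a homomorphism $G_0\to(\mathbb{R},+)$ with no a priori reason to vanish ($\mathfrak{u}(n)$ is not semisimple, so one cannot kill it at the Lie algebra level either). The paper treats $\lambda=0$ as a separate Part II: it runs the level-set/Tanno argument on the scalar curvature $\SS$ (which \emph{is} genuinely isometry-invariant), derives the ansatz from that, proves $\mathbb{N}=\mathbb{CP}^{n-1}$ by a different curvature computation ($\Rc(X,X)=0$ along the non-$\partial_t,\partial_z$ part of $\nabla f$, leading to $k-2n(F')^2-2F''F=0$ and a completeness contradiction if $k\le 0$), and only \emph{then} concludes that $f$ can be chosen $G$-invariant because $G$ is compact. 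Your proposal needs either this detour or an independent argument that $c_\psi\equiv 0$ for steady irreducible Kähler GRS.

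A secondary, more easily repaired omission: in the equality case you pass directly from Tanno's classification to ``a bundle over $\mathbb{N}(k)$ with charge $q=1$,'' but Tanno's list also contains the global products ($q=0$) and the hyperbolic space. The paper excludes these using the Kähler condition and irreducibility (for $q=0$ the almost Kähler condition $FF'=qH$ forces $F$ constant and the metric splits off $\mathbb{N}(k)$; the hyperbolic product is never almost Kähler); you should say this explicitly. Your Claim-1 argument via uniqueness of the parallel complex structure up to sign is a legitimate alternative to the paper's citation of Lichnerowicz, though making it rigorous requires Berger's holonomy classification to rule out the special holonomy (hence Einstein, hence trivial) cases.
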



\begin{remark} The construction of complete K\"{a}hler GRS with $N=\mathbb{CP}^{n-1}$  was considered by many authors and a systematic analysis is given in \cite{DW11coho}. In particular, the isometry group is $U(n)$ and there must be exactly one or two singular orbits (two only if $\lambda >0$). To smoothly compactify each, one must collapse either the whole sphere (both $H$ and $F$ going to zero) or just the fiber ($H$ going to zero). Here are all possible configurations:   
	\begin{itemize}
		\item $\lambda=0$, $I=[0,\infty)$, the singular orbit is either a point ($M$ is topologically $\mathbb{C}^n$) or $\mathbb{CP}^{n-1}$ (($M$ is $\mathbb{C}^n$ blowing up at one point) \cite{caohd96, CV96}. 
		\item $\lambda<0$, $I=[0,\infty)$, the original construction is due to \cite{caohd97limits, CV96, fik03}.  
		\item $\lambda>0$, $I=[0, 1]$, each singular orbit is $\mathbb{CP}^{n-1}$ \cite{koi90, caohd96, CV96}. 
		\item $\lambda>0$, $I=[0, \infty)$, the singular orbit is either a point or $\mathbb{CP}^{n-1}$ \cite{fik03}.
	\end{itemize}
For $\lambda<0$, it is possible that $k\leq 0$; see \cite[Remark 4.22]{DW11coho} and \cite[Theorem 1]{PTV99quasi}. 
\end{remark}
\begin{remark}
	 The metric in Theorem \ref{main0} has each $(P, g_t)$ being a deformed homogenous Sasakian structure with constant holomorphic sectional curvature. 
\end{remark}


For the reducible case, the group of isometries is potentially skewed by a Gaussian soliton factor of a large dimension. Thus, it is natural to consider the following. 

\begin{corollary}
	\label{main3}
	Let $(M^{n}, g, J, f, \lambda)$ be a complete simply connected non-trivial K\"{a}hler GRS. Its group of automorphisms is of dimension at most $n^2$ and equality happens iff it is either irreducible as in Theorem \ref{main0} or isometric to
	\begin{enumerate}[label=(\roman*)]
	\item a product of an Euclidean soliton and a Hamilton's cigar ($\lambda=0$);     
		\item a product of $\mathbb{N}(k)$ ($k\leq 0$) with a complete K\"{a}hler expanding GRS in real dimension two ($\lambda<0$).
	\end{enumerate} 
\end{corollary}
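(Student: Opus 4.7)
The plan is to reduce to the irreducible case of Theorem \ref{main0} through the de Rham decomposition and sum factor-wise bounds. As $(M,g,J)$ is complete simply connected K\"{a}hler, it splits holomorphically and isometrically as $M \cong \mathbb{C}^{p} \times M_{1} \times \cdots \times M_{\ell}$, with each $M_i$ an irreducible non-flat K\"{a}hler factor. By \cite{PW09grsym}, the soliton respects this splitting: the potential decomposes as $f = f_0 + \sum f_i$ and each $(M_i, g_i, J_i, f_i)$ is a K\"{a}hler GRS sharing the constant $\lambda$. Non-triviality ensures that at least one $M_i$ is non-Einstein. Uniqueness of the de Rham decomposition implies every automorphism of $M$ preserves this factorization up to permuting isomorphic factors (a discrete ambiguity), so
\[
  \dim \text{Aut}(M) \;=\; \dim\text{Aut}(\mathbb{C}^{p}) + \sum_{i=1}^{\ell} \dim \text{Aut}(M_i).
\]

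For the summands I would invoke Tanno's theorem: $\dim\text{Aut}(\mathbb{C}^{p}) = p(p+2)$, and for each Einstein factor of complex dimension $n_i$, $\dim\text{Aut}(M_i)\leq n_i(n_i+2)$ with equality iff $M_i = \mathbb{N}(k_i)$. For each non-Einstein factor of complex dimension $n_j$, Theorem \ref{main0} gives $\dim\text{Aut}(M_j)\leq n_j^{2}$. Setting $b=\sum_{\text{Einstein}} n_i$ and $c=\sum_{\text{non-trivial}} n_j$, so that $p+b+c = n$ and $c\geq 1$, the convexity estimates $\sum n_i^{2} \leq b^{2}$ and $\sum n_j^{2} \leq c^{2}$ yield
\[
  \dim\text{Aut}(M) \;\leq\; p(p+2) + b(b+2) + c^{2} \;=\; n^{2} - 2p(b+c-1) - 2b(c-1) \;\leq\; n^{2},
\]
since both correction terms are non-negative.

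Equality forces every intermediate inequality to be sharp: at most one Einstein factor, necessarily equal to $\mathbb{N}(k)$; exactly one non-Einstein factor $M_j$; and the vanishing $p(b+c-1) = b(c-1) = 0$. With $c \geq 1$ this forces $c=1$, and then one of: $p=b=0$ (the irreducible case of Theorem \ref{main0}), $p=0$ and $b=n-1$ with $k\neq 0$, or $p=n-1$ and $b=0$. Hence in the reducible situation $M \cong \mathbb{N}(k)\times M_j$, where $\mathbb{N}(0)=\mathbb{C}^{n-1}$ accommodates the Euclidean factor, and $M_j$ is a complete non-trivial K\"{a}hler GRS of complex dimension one. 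A standard classification of surface GRS eliminates the shrinking case, leaving $M_j$ to be Hamilton's cigar ($\lambda=0$) or the rotationally symmetric expander ($\lambda<0$). Compatibility of the shared $\lambda$ with the Einstein constant of $\mathbb{N}(k)$ excludes $k>0$ and collapses the remaining configurations into cases (i) and (ii).

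The main obstacle is the equality analysis, specifically squeezing the two convexity estimates simultaneously with the vanishing of the correction $2p(b+c-1)+2b(c-1)$ to pin down a single non-Einstein factor of complex dimension one. A subsidiary point, but essential to the additivity of $\dim\text{Aut}$ across the product, is the uniqueness of the de Rham decomposition, which ensures that every automorphism preserves the factors set-wise.
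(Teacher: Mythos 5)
Your proposal is correct and follows essentially the same route as the paper: de Rham decomposition, the Petersen--Wylie splitting of the soliton structure, additivity of the automorphism group across factors (the paper cites Hano's theorem where you cite uniqueness of the decomposition), Tanno's bound $n_i(n_i+2)$ for the trivial factors and Theorem \ref{main0} for the non-trivial ones, followed by the same convexity computation --- your three-parameter bookkeeping in $p,b,c$ collapses to the paper's two-parameter bound $A(A+2)+B^2$ once the flat and Einstein parts are grouped. The equality analysis pinning down one maximally symmetric trivial factor of complex dimension $n-1$ and one non-trivial factor of complex dimension one, and the final appeal to the two-dimensional GRS classification to exclude $\lambda>0$, likewise match the paper.
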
 
\begin{remark}
	There is a list of \textit{all} models of GRS in real dimension two \cite{BM15}. 
\end{remark}

Under certain conditions, an infinitesimal isometry is closely related to conformal \cite{schoen95conformal} and affine vector fields \cite{KNvolumeI96}. 
For example, following \cite[Chapter 9]{KNvolumeII96}, one recalls that a K\"{a}hler manifold is non-degenerate if the restricted linear holonomy group at $x\in M$ contains the endormorphism $J_x$ for an arbitrary $x\in M$. 

\begin{corollary}
	\label{nondegenrate}
	Let $(M^{n}, g, J, f)$ be a non-degenerate complete simply connected K\"{a}hler GRS. If $f$ is non-constant, then the group of affine transformations is of dimension at most $n^2$ and equality happens iff it is either irreducible as in Theorem \ref{main0} or a product of $\mathbb{N}(k)$ ($k<0$) with a complete K\"{a}hler expanding GRS in real dimension two. 
\end{corollary}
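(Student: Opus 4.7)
The plan is to reduce Corollary \ref{nondegenrate} to Corollary \ref{main3} by establishing that, under the non-degeneracy hypothesis, the affine group of $(M,g)$ coincides with the automorphism group of $(M,g,J)$. Two classical facts would be invoked. First, since the restricted linear holonomy of $(M,g,J)$ contains $J_x$ at every $x$, and affine transformations act on the holonomy representation by conjugation, any affine transformation must preserve $J$ (Kobayashi--Nomizu Vol.~II, Ch.~IX). Second, non-degeneracy also precludes a Euclidean de Rham factor: such a factor would supply a nonzero vector fixed by the holonomy, contradicting the fact that $J$ has no fixed vectors. By the Yano--Nomizu theorem (Kobayashi--Nomizu Vol.~I, Thm~VI.3.5), on a complete simply connected Riemannian manifold with no Euclidean de Rham factor every affine transformation is an isometry. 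Combining these, one concludes $\dim(\mathrm{Aff}(M)) = \dim(\mathrm{Aut}(M))$ under the hypotheses.

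Next I would verify that the soliton is non-trivial, in order to invoke Corollary \ref{main3}. If it were rigid, its de Rham decomposition would read $N \times \mathbb{C}^k$ with $N$ Einstein and $f$ the Euclidean quadratic on the $\mathbb{C}^k$ factor; non-constancy of $f$ then forces $k \geq 1$, contradicting the absence of a flat factor. One must also dispense with the subtler possibility that every de Rham factor is Einstein while $f$ is non-constant on some non-flat factor: by Petersen--Wylie the soliton structure splits as $f = \sum f_i$, and a K\"{a}hler Einstein factor carrying a non-constant $f_i$ with $\mathrm{Hess}\,f_i = c\, g_i$, $c \neq 0$, would have to be a flat K\"{a}hler space form by Tashiro's rigidity, again contradicting non-degeneracy. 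Hence the soliton is non-trivial and Corollary \ref{main3} gives $\dim(\mathrm{Aff}) = \dim(\mathrm{Aut}) \leq n^2$.

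For the characterization of equality, I would inspect the list in Corollary \ref{main3} and discard the configurations containing a flat factor: the Gaussian-times-cigar model (case (i)) is excluded by its Euclidean factor, and the subcase $k = 0$ of the $\mathbb{N}(k) \times \Sigma^2_{\text{expanding}}$ product is excluded since $\mathbb{N}(0) = \mathbb{C}^{n-1}$ is flat. The surviving configurations match exactly those asserted in Corollary \ref{nondegenrate}. For the converse, both the ansatz models of Theorem \ref{main0} and the products $\mathbb{N}(k) \times \Sigma^2$ with $k < 0$ are non-Ricci-flat K\"{a}hler manifolds whose holonomies are the full $U$-type groups and hence contain $J$, confirming non-degeneracy.

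The principal obstacle is the ``$\mathrm{Aff} = \mathrm{Aut}$'' reduction of the first paragraph, which hinges on two classical theorems whose hypotheses must be matched carefully to our setting; in particular, the Yano--Nomizu step requires the completeness and simple connectivity that are given, plus the absence of a Euclidean factor, which is precisely where non-degeneracy enters. A secondary delicate point is the non-triviality verification: ruling out Einstein K\"{a}hler factors sustaining a non-constant $f_i$ without any flat direction is where Tashiro-type rigidity intervenes.
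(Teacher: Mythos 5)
Your proposal is correct and follows essentially the same route as the paper: non-degeneracy forces affine transformations to be automorphisms (via the Kobayashi--Nomizu holonomy argument) and rules out a Euclidean de Rham factor (so affine $=$ isometric by the decomposition theorem), after which Corollary \ref{main3} applies and the flat-factor cases are discarded from its list, with the converse checked via non-degeneracy of the Ricci curvature on the model spaces. Your explicit verification that the soliton is non-trivial (ruling out Einstein factors carrying a non-constant $f_i$ via Tashiro-type rigidity) is a detail the paper leaves implicit, and is a worthwhile addition.
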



Indeed, Theorem \ref{main0} follows from a more general version for (possibly incomplete) almost Hermitian GRS $(M^{2n}, g, J, f, \lambda)$. These structures are compatible:
\[g(X, Y) = g(JX, JY)  \text{  and } \He f (X, Y) = \He f(JX, JY).\]
The group of symmetry is to preserve all $g$, $J$, and $f$.
\begin{theorem}
	\label{main1}
	Let $(M^{2n}, g, J, f)$ be an almost Hermitian GRS with symmetry group $G$. If $f$ is non-constant then $\text{dim}(G)\leq n^2$ and equality happens iff locally it is either 
	
	\begin{enumerate}[label=(\roman*)]
		\item constructed by the ansatz \ref{ansatz} for  ${N}=\mathbb{N}(k)$ and $q=0, 1$. 
		\item a product of a line/circle with a hyperbolic space. 
	\end{enumerate} 
\end{theorem}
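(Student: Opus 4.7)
The approach is to project the symmetry problem onto a single level hypersurface of $f$, on which the ambient Hermitian structure degenerates to an almost contact metric one, and then invoke a classical Tanno bound in that setting. Since $f$ is non-constant, for a generic regular value $c$ let $\Sigma = f^{-1}(c)$, a hypersurface of real dimension $2n-1$. Setting $\nu = \nabla f/|\nabla f|$, $\xi = -J\nu$, $\eta = g(\xi,\cdot)|_\Sigma$, and $\phi(X)=JX+\eta(X)\nu$, the $g$-compatibility of $J$ together with the $J$-invariance of $\He f$ (built into the almost Hermitian GRS hypothesis) ensure that $(g_\Sigma,\phi,\xi,\eta)$ is an almost contact metric structure on $\Sigma$, with $\xi$ playing the role of a Reeb field that is Killing on $\Sigma$ by virtue of the soliton equation (\ref{grs}).

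Given an infinitesimal $G$-symmetry $X$: since $X$ preserves $f$, it is tangent to $\Sigma$; since $X$ preserves $g$, it also preserves $\nabla f$ and hence $\nu$; and since $X$ preserves $J$, it preserves $\xi,\eta,\phi$. Thus $X$ restricts to an infinitesimal automorphism of the almost contact metric structure on $\Sigma$. The restriction map $\mathfrak{g}\to\operatorname{aut}(\Sigma)$ is injective because a Killing field on a real-analytic metric is determined by its value and the (skew) covariant derivative at any single point, and the GRS equation forces real analyticity in appropriate coordinates. Invoking Tanno's theorem that the automorphism group of a $(2m+1)$-dimensional almost contact metric manifold has dimension at most $(m+1)^2$, applied with $m=n-1$, yields the desired bound $\dim(G)\leq n^2$.

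In the equality case, $\Sigma$ must be a maximal model in Tanno's classification, a space of constant $\phi$-sectional curvature. These fall into three families: the Sasakian space forms (Hopf-type $S^1$-bundles over $\mathbb{N}(k)$), the cosymplectic models (trivial $\mathbb{R}$- or $S^1$-bundles over $\mathbb{N}(k)$), and the Kenmotsu-type warped products. These correspond respectively to the ansatz (\ref{ansatz}) with $q=1$, the ansatz with $q=0$, and a warped product of a line or circle with a real hyperbolic factor, exhausting cases (i) and (ii). To reconstruct the full GRS one transports the maximal structure along the $\nabla f$-flow between level sets and then derives ODEs for $H(t)$, $F(t)$, $f(t)$ from the soliton equation whose integration produces either the ansatz or the hyperbolic product.

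The main obstacle will be the equality analysis. The upper bound reduces to a single invocation of Tanno, but the classification step requires showing that the maximal type of the contact structure is preserved along the $\nabla f$-flow (so that the GRS is coherently foliated by isomorphic maximal almost contact manifolds), that the constant $\phi$-sectional curvature of the level sets is compatible with the soliton equation, and that the Kenmotsu branch genuinely manifests as a line or circle times $\mathbb{H}^{2n-1}$ rather than some twisted variant. Isolating this Kenmotsu branch and verifying it is genuinely distinct from the $q=0$ ansatz (since both are, a priori, products) is the subtlest point in the argument.
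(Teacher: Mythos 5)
Your proposal follows essentially the same route as the paper: restrict the symmetry group to a regular level set of $f$, where $\zeta=-J(\nabla f/|\nabla f|)$, $\eta$, and $\Phi$ give an induced almost contact metric structure on which $G$ acts by automorphisms with injective restriction, then invoke Tanno's $(m+1)^2$ bound with $m=n-1$ and his classification of the maximal models (Sasakian space forms, trivial bundles over $\mathbb{N}(k)$, and the hyperbolic warped product), and finally reconstruct the soliton as a cohomogeneity-one metric $dt^2+g_t$ whose soliton ODEs produce the ansatz with $q=0,1$ or force the hyperbolic factor to be a rigid product. The one piece you leave unaddressed is the converse half of the ``iff'' --- that the ansatz metrics actually realize $\dim G=n^2$, which the paper obtains by lifting the automorphism group of $\mathbb{N}(k)$ to the line/circle bundle $P$ and hence to $M$ (Proposition \ref{conversesymmetry}) --- while your side claims that $\zeta$ is Killing on the level set and that real analyticity is needed for injectivity are unnecessary (and the former is unjustified in the merely almost Hermitian setting): the $1$-jet determination of Killing fields together with $\nabla_{\nabla f}X=\He f(X,\cdot)=0$ along the level set already gives injectivity.
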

\begin{remark} The metric in case (ii) above can be written as, for non-zero constants $H$ and $A$, 
\begin{align*}
	g = dt^2+g_t &=dt^2+ H^2 dz^2+ e^{2Az}g_{\mathbb{C}^{n-1}},\\
	\lambda=\frac{\partial^2 f}{\partial t^2}&=-2(\frac{A}{H})^2(n-1).
\end{align*} 
\end{remark}
\begin{remark}
	For Ansatz \ref{ansatz}, equation (\ref{grs}) is equivalent to an ODE system: 
	\begin{align}
		{\lambda} &=-\frac{H''}{H}-\frac{(2n-2)F''}{F}+f''=\frac{H^2 q^2(2n-2)}{F^4}-\frac{H''}{H}-\frac{2(n-1)H'F'}{HF}+f'\frac{H'}{H}\nonumber\\
		\label{ODEsasa}
		&=\frac{k}{F^2}-\frac{2H^2 q^2}{F^4}- \frac{F''}{F}-(2n-3)(\frac{F'}{F})^2-\frac{H'F'}{FH}+f'\frac{F'}{F}.
	\end{align}
	The almost K\"{a}hler condition is equivalent to 
	\[FF'= qH.\] 
	The metric could then be rewritten to a modified Calabi's ansatz; see Lemma \ref{almostKahler}. 
\end{remark}
\begin{remark}
	It is possible to construct local solutions for (\ref{ODEsasa}) giving (possibly incomplete) manifolds with maximal symmetry. For $\mathbb{N}=\mathbb{CP}^{n-1}$, generalized versions of (\ref{ODEsasa}) were investigated by \cite{DW11coho} and \cite{BDW15}.  
\end{remark}

\begin{remark}
	The Gaussian soliton $(\mathbb{R}^{2n}, g_{Euc}, f=\lambda \frac{|x|^2}{2},\lambda) \text{ for } \lambda\neq 0$ belongs to family $q=1$ with $P$ being the round sphere, $H=F=t$, and $k=2n$. For $\lambda=0$, the soliton $(\mathbb{R}^{2n}, g_{Euc}, f=ax_i+b)$ belongs to family $q=0$ with $P$ being the Euclidean space.  
\end{remark}
To illustrate the dimension $n^2$, let's consider the case of a Gaussian soliton on $\mathbb{C}^n$ for $\lambda\neq 0.$ The isometry group consists of $2n$ translations and $\frac{2n(2n-1)}{2}$ rotations. With a standard coordinate $\{x_i, y_i\}_{i=1}^n$, one specifies an almost complex structure such that
\[J(\partial_{x_i})= \partial_{y_i}, ~~~~ J(\partial_{y_i})= -\partial_{x_i}. \]
Then it is clear that not all rotations preserve this tensor field. That's why the automorphism group is only of dimension $n(n+2)$. Among those, the translations do not preserve the potential function $f=\lambda \frac{|x|^2}{2}$. Consequently, the group of symmetry preserving $g$, $J$ and $f$ is of dimension $n^2$.

The paper is organized as follows. Section \ref{prelim} recalls general and useful preliminaries while Section \ref{coho1ansatz} is devoted to calculation about ansatz \ref{ansatz}. Afterward, we'll discuss the relation between the symmetry of an almost Hermitian GRS and one of its level sets determined by $f$. The key idea is that a symmetry group on $(M, g, J, f)$ induces a symmetry group of regular level sets considered as almost contact metric structures. In Section \ref{rigid}, the rigidity of a maximal dimension is examined and proofs of all theorems are collected. Finally the appendix explains our convention and recalls submersion. 

\subsection{Acknowledgment}  H. T was partially supported by grants from the Simons Foundation [709791], the National Science Foundation [DMS-2104988], and the Vietnam Institute for Advanced Study in Mathematics. We benefit greatly from discussion with Profs. McKenzie Wang, Catherine Serle, and Ronan Conlon. We also thank Will Wylie for suggestions and \cite{wyliepersonal}.

\section{Preliminaries}
\label{prelim}
We'll recall fundamental concepts and useful results about an almost complex structure, a gradient Ricci soliton, group actions on a manifold, an almost contact structure, and certain model spaces. 
\subsection{Almost Complex Structure}
Let $M$ be a smooth manifold of dimension $2n$. 
\begin{definition}
	An almost complex structure is a smooth section $J$ of the bundle of endormorphisms $\text{End}(TM)$ such that
	\[J^2=-\id. \]

\end{definition}
	One can immediately extend $J$ to be an endormorphism on the complexified tangent bundle $TM\otimes_{\mathbb{R}}\mathbb{C}$ via $\mathbb{C}$-linearity. An almost complex structure is said to be integrable if $M$ admits an atlas of complex charts with holomorphic transition functions such that $J$ corresponds to the induced complex multiplication on $TM\otimes_{\mathbb{R}}\mathbb{C}$. A real differentiable manifold with an integrable almost complex structure is, by definition, a complex manifold. Thanks to the work of Newlander and Nirenberg \cite{NN57}, the integrability of $J$ is equivalent to the vanishing of the Nijenhuis tensor
	\[N_J(X, Y)=[JX, JY]-[X, Y]-J[X, JY]-J[JX, Y]. \]
	
\begin{definition}
	Let $(M^{2n}, g)$ be a Riemannian manifold with an almost complex structure $J$. $(M, g, J)$ is called an almost Hermitian manifold and $g$ a Hermitian metric if
	\[g(JX, JY)=g(X, Y).\] 
	The fundamental $2$-form or K\"{a}hler form is given by 
	\[\omega(X, Y)= g(X, JY).\]
	$(M, g, J)$ is called almost K\"{a}hler if $d\omega=0$. When $J$ is integrable, we upgrade an almost Hermitian to Hermitian and almost K\"{a}hler to K\"{a}hler. 
\end{definition}	    
For a Riemannian manifold to be K\"{a}hler, the following is well-known.
\begin{proposition}\cite[Proposition 3.1.9]{BGbookSasakian08}
	Let $(M, g, J)$ be an almost Hermitian (real) manifold. The followings are equivalent:
	\begin{enumerate}
		\item $\nabla J=0$,
		\item $\nabla \omega_g=0$,
		\item $(M, g, J)$ is K\"{a}hler.  
	\end{enumerate}
\end{proposition}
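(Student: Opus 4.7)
My plan is to prove the three equivalences in a cycle: first establish (1)$\Leftrightarrow$(2), then (1)$\Rightarrow$(3), and finally the subtler direction (3)$\Rightarrow$(1).

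The equivalence (1)$\Leftrightarrow$(2) is the immediate part. Since the Levi-Civita connection satisfies $\nabla g=0$ and $\omega$ is defined by $\omega(Y,Z)=g(Y,JZ)$, a direct computation gives
\[
(\nabla_X \omega)(Y,Z) = g(Y,(\nabla_X J)Z).
\]
Because $g$ is non-degenerate, $\nabla\omega\equiv 0$ iff $\nabla J\equiv 0$. I would state this as a short lemma and move on.

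For (1)$\Rightarrow$(3) I would show separately that $d\omega=0$ and that the Nijenhuis tensor $N_J$ vanishes. The first follows from the standard identity expressing the exterior derivative of a $2$-form as the antisymmetrization of its covariant derivative: if $\nabla\omega=0$, then automatically $d\omega=0$. For integrability, I would expand $N_J$ using the torsion-free property of $\nabla$, replacing each Lie bracket $[U,V]$ by $\nabla_U V-\nabla_V U$, and then rewrite every term involving $J$-derivatives via the Leibniz rule to obtain a formula of the shape
\[
N_J(X,Y) = (\nabla_{JX}J)Y-(\nabla_{JY}J)X-J(\nabla_X J)Y+J(\nabla_Y J)X,
\]
which manifestly vanishes when $\nabla J=0$. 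By the Newlander--Nirenberg theorem invoked in the preceding discussion, $J$ is then integrable, and together with $d\omega=0$ this gives the K\"ahler condition.

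The heart of the proof, and the step I expect to be the main obstacle, is (3)$\Rightarrow$(1). Here I need a pointwise identity that extracts $\nabla J$ from the data $(d\omega,N_J)$. The standard tool is the formula
\[
2\,g\bigl((\nabla_X J)Y,\,Z\bigr) \;=\; d\omega(X,Y,Z)\;-\;d\omega(X,JY,JZ)\;+\;g\bigl(N_J(Y,Z),\,JX\bigr),
\]
whose derivation requires cyclically summing the Koszul formula for $g(\nabla_X(JY),Z)$ against several permutations and then using the compatibility $g(JU,JV)=g(U,V)$ to collapse terms. Once this identity is in hand, the hypothesis that $(M,g,J)$ is K\"ahler, meaning $d\omega=0$ and $N_J=0$, forces the right-hand side to vanish for all $X,Y,Z$, hence $\nabla J=0$ by non-degeneracy of $g$. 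The algebraic bookkeeping in establishing the master identity is where the real work lies; everything else is formal. Since this is a classical result \cite{BGbookSasakian08}, I would either derive the identity carefully in a short computation or cite it and give a precise reference.
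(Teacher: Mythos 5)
The paper offers no proof of this proposition at all: it is stated as a citation to \cite[Proposition 3.1.9]{BGbookSasakian08}, so there is nothing internal to compare against. Your argument is the standard textbook proof of this classical fact, and it is correct in outline. The computation $(\nabla_X\omega)(Y,Z)=g(Y,(\nabla_X J)Z)$ for (1)$\Leftrightarrow$(2) is right, the expansion of $N_J$ via the torsion-free connection giving $N_J(X,Y)=(\nabla_{JX}J)Y-(\nabla_{JY}J)X-J(\nabla_XJ)Y+J(\nabla_YJ)X$ checks out, and the strategy for (3)$\Rightarrow$(1) via a master identity expressing $g((\nabla_XJ)Y,Z)$ in terms of $d\omega$ and $N_J$ is exactly how the references (Kobayashi--Nomizu IX.4.2, Boyer--Galicki) do it. One caveat: the precise coefficients in your master identity are convention-dependent, and this paper uses a nonstandard normalization of the exterior derivative (a factor of $m+1$ on the left-hand side), so the constants you wrote would need adjusting to match; since the hypothesis is that both $d\omega$ and $N_J$ vanish, this does not affect the conclusion $\nabla J=0$, but it is the one place where the ``algebraic bookkeeping'' you flag could trip you up if you derived the identity from a source with a different convention. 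Given that the paper treats this as a black-box citation, either deriving the identity carefully or citing it precisely, as you propose, is appropriate.
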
 
On a K\"{a}hler manifold, one observes that
\begin{align*}
	J(\RR(X, Y)Z) &=\RR(X, Y) JZ,\\
	\RR(X, Y, JZ, JW)=g(\RR(X, Y) JZ, JW) &= g(\RR(X, Y) Z, W)=\RR(X, Y, Z, W).
\end{align*}
Naturally, it leads to the notion of the Ricci form. 
\begin{definition}
	The Ricci form $\rho$ is the image of $\omega_g$ via the curvature operator:
	\begin{align*}
		\rho(X, Y) &=g(\RR(\omega_g)(X), Y).
	\end{align*}
\end{definition}

A priori, it is not clear how the Ricci form $\rho$ is related to the Ricci curvature tensor.
\begin{proposition}\cite[Proposition 2.45]{besse} On a K\"{a}hler manifold $(M, g, J, \omega_g)$, we have
	\[	\Rc(X, Y)=\rho(X, JY). \]
\end{proposition}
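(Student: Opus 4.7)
The identity is classical, and my plan is to verify it by a short pointwise computation using the first Bianchi identity together with the K\"{a}hler curvature symmetry $\RR(A,B,JC,JD) = \RR(A,B,C,D)$ displayed just above the statement.

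Fix $p \in M$ and choose a $J$-adapted orthonormal basis $\{e_1, Je_1, \ldots, e_n, Je_n\}$ of $T_pM$, so that $\omega_g = \sum_{i=1}^n e^i \wedge (Je)^i$. Unpacking the curvature operator $\RR$ acting on $2$-forms (identified with skew endomorphisms via the metric), I would rewrite
\[
\rho(X,Y) \;=\; g(\RR(\omega_g)(X), Y) \;=\; c\sum_{i=1}^{n}\RR(X,Y,e_i,Je_i),
\]
for a convention-dependent constant $c$; in other words, $\rho$ is a pointwise trace of the Riemann tensor against the K\"{a}hler form in the last two slots.

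Next, I would apply the first Bianchi identity to each summand $\RR(X,Y,e_i,Je_i)$ to redistribute the $(X,Y)$ entries among the three cyclic positions, then use the K\"{a}hler symmetry above to pull the $J$ out of an inner slot, and finally combine pair symmetry $\RR(A,B,C,D)=\RR(C,D,A,B)$ with the $J$-invariance $\Rc(JA,JB) = \Rc(A,B)$ (forced by $\nabla J = 0$, and which in particular makes $\rho$ genuinely antisymmetric) to collapse the remaining sum into a multiple of $\Rc(X, JY)$. Substituting $Y \mapsto JY$ and using $J^2 = -\id$ then yields the stated $\Rc(X,Y) = \rho(X, JY)$.

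The only real difficulty is bookkeeping: the combinatorial constant arising from the curvature operator's action on decomposable $2$-forms (whether one sums over $i<j$ or over all ordered $(i,j)$, and how the resulting $2$-form is identified with an endomorphism through $g$) must be pinned down so that no spurious factor of $2$ or sign flip appears in the final answer. The geometric content of the identity is entirely captured by the first Bianchi identity and $\nabla J = 0$.
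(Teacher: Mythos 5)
Your computation is the classical one and matches the paper's own proof in all essentials: both work in a $J$-adapted orthonormal basis and combine the first Bianchi identity with the K\"{a}hler symmetry $\RR(\cdot,\cdot,J\cdot,J\cdot)=\RR(\cdot,\cdot,\cdot,\cdot)$ (equivalently $\nabla J=0$) to trade the Ricci trace for a contraction against $\omega_g$. The only difference is cosmetic --- the paper starts from $\Rc(X,Y)=\sum_i\bigl(\RR(X,e_i,e_i,Y)+\RR(X,Je_i,Je_i,Y)\bigr)$ and works toward $\rho$, whereas you run the same identities from $\rho$ back to $\Rc$ --- and the convention-dependent constants you rightly flag are exactly the factor $-2$ in $\omega_g=-2\sum_i dx^i\wedge dy^i$ and the $\tfrac12$ in $\RR(X\wedge Y)(Z,W)=\tfrac12\RR(X,Y,Z,W)$, which cancel to give the stated identity with no stray sign.
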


\begin{corollary}
	On a K\"{a}hler manifold $(M, g, J)$, $\Rc$ is $J$-invariant. 
\end{corollary}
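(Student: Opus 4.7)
The plan is to chain together the preceding proposition, the antisymmetry of the Ricci form, and the (general Riemannian) symmetry of the Ricci tensor. Specifically, by the previous proposition $\Rc(X,Y)=\rho(X,JY)$, and since $\rho$ is a $2$-form (it is the image of the K\"ahler form under the curvature operator, which lands in $\Lambda^2 T^\ast M$), it is antisymmetric. The symmetry $\Rc(X,Y)=\Rc(Y,X)$ holds on any Riemannian manifold.

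The computation I would carry out is the following chain. Starting from the proposition applied with $X\mapsto JX$, $Y\mapsto JY$,
\[\Rc(JX,JY)=\rho(JX,J(JY))=\rho(JX,-Y)=-\rho(JX,Y),\]
where the second equality uses $J^2=-\id$. By the antisymmetry of the $2$-form $\rho$,
\[-\rho(JX,Y)=\rho(Y,JX),\]
and the previous proposition (now applied to the pair $(Y,X)$) identifies $\rho(Y,JX)=\Rc(Y,X)$. Finally, the Riemannian Ricci tensor is symmetric, so $\Rc(Y,X)=\Rc(X,Y)$, which gives the desired $J$-invariance $\Rc(JX,JY)=\Rc(X,Y)$.

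There is essentially no obstacle in this argument; the only point that needs a small verification is the antisymmetry of $\rho$, which is immediate from its definition $\rho(X,Y)=g(\RR(\omega_g)(X),Y)$ because the curvature operator sends $2$-forms to $2$-forms and $\omega_g$ is antisymmetric. Alternatively, one could bypass $\rho$ entirely by invoking the K\"ahler curvature identity $\RR(X,Y,JZ,JW)=\RR(X,Y,Z,W)$ recalled just above the definition of the Ricci form, tracing over a $J$-invariant orthonormal frame $\{e_i, Je_i\}_{i=1}^{n}$, and using the first Bianchi identity; this yields $\Rc(JX,JY)=\Rc(X,Y)$ directly. I would present the $\rho$-based proof since it is a one-line consequence of what has just been established.
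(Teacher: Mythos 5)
Your proof is correct and follows the route the paper intends: the corollary is stated as an immediate consequence of the preceding proposition $\Rc(X,Y)=\rho(X,JY)$, and your chain using the antisymmetry of the $2$-form $\rho$, $J^2=-\id$, and the symmetry of $\Rc$ is exactly the one-line argument being invoked. No issues.
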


\subsection{Gradient Ricci Solitons} 
In this subsection, we recall how a GRS is compatible with a complex setup. 
\begin{definition}
	$(M, g, J, f)$ is an almost Hermitian GRS if $(M, g, f)$ is a GRS, $(M, g, J)$ is an almost Hermitian manifold, and $\lie_{\nabla f} g$ is $J$-invariant.
\end{definition}
\begin{remark}
Because of (\ref{grs}), $\lie_{\nabla f} g$ is $J$-invariant if and only if $\Rc$ is $J$-invariant. Thus, the assumption is automatic for K\"{a}hler manifolds.
\end{remark}
\begin{definition}
	$(M, g, J, f)$ is a K\"{a}hler GRS if $(M, g, f)$ is a GRS, $(M, g, J)$ is K\"{a}hler manifold.
\end{definition}
In a complex coordinate system, the $J$-invariant property is equivalent to $\nabla f$ being a holomorphic vector field. That is, 
\[\lie_{\nabla f} g(\partial_{z_i}, \partial_{z_j})=\lie_{\nabla f} g(\partial_{\overline{z_i}}, \partial_{\overline{z_j}})=0. \]

\subsection{Group Actions on a manifold}
In this subsection, we review the basic setup and properties of group actions on a manifold. The main references are \cite{KNvolumeI96, KNvolumeII96, kobayashi95}. Let $G$ be a topological group. An action of $G$ on a manifold $M$ is a homomorphism from $G$ to the group of homomorphisms on $M$
\[g\mapsto A_g \text{ such that } A_g: M\mapsto M,  x\mapsto g.x\] 

The action is \textit{continuous/smooth} if the map $G\times M\mapsto M$, given by $(g, x)\mapsto g\cdot x$ is continuous/smooth (for smoothness, it requires G to be a Lie group). The action is said to be \textit{proper} if the associated map $G\times M \mapsto M\times M$, given by $(g, x)\mapsto (x, g\cdot x)$ is proper (that is, the inverse of any compact set is compact). 

For each $x\in G$, the subgroup $G_x=\{g\in G, g\cdot x=x\}$ is called the \textit{isotropy} subgroup or the \textit{stabilizer}. 
The orbit through $x$ is an immersed sub-manifold and there is a natural identification
\[G\cdot x=\{y\in M, y=g\cdot x, g\in G\} \equiv G/G_x. \]
Orbits are also classified based on the relative size of associated isotropy groups. In particular, principal orbits correspond to the smallest possible groups and singular ones have isotropy groups of higher dimensions.

At the infinitesimal level, a smooth vector field $X$ on $M$ generates a (local) one-parameter family of maps between domains in $M$. If the vector field is complete, then it generates global differmorphisms. If the corresponding maps preserve certain geometric quantities and structures then the vector field is called a (local) infinitesimal transformation of the same property. A vector field preserves a tensor $T$ if and only if 
\[\lie_X T=0.\]
 It is also noted that the set of all vector fields can be seen as a Lie algebra $\mathfrak{X}(M)$ by the natural bracket 
\[[X, Y]= XY-YX.\] 
Since 
\[\lie_{[X, Y]}=\lie_X\circ \lie_Y-\lie_Y\circ \lie_X,\]
the set of all infinitesimal transformations preserving a tensor $T$ is always a Lie sub-algebra of $\mathfrak{X}(M)$. It is noted that the group of transformations preserving a tensor $T$ is not necessarily a Lie group. 

Nevertheless, on a Riemnnian manifold, the group of isometries (preserving the Riemannian metric) is a Lie group \cite[Chapter 6. Theorem 3.4]{KNvolumeI96}. The infinitesimal transformation corresponding to a subgroup of isometries is called a Killing vector field. That is,
\[\lie_X g=0.\]
The Lie algebra of such complete vector fields corresponds to the Lie algebra of the Lie group of all isometries on $M$. It is well-know that a Killing vector field is totally determined by its zero and first order values at a point $(X_p, (\nabla X)_p)$ \cite[Chapter VI]{KNvolumeI96}. \\

The K\"{a}hler and GRS structures impose rigidity on the Riemannian manifold as the followings are well-known \cite{fik03}.   
\begin{lemma}
	\label{killing}
	Let $(M, g, J, f)$ be a K\"{a}hler gradient Ricci soliton. Then, we have the followings:
	\begin{enumerate}
		\item $J(\nabla f)$ is a Killing vector field.
		\item $\lie_{\nabla f} J\equiv 0$. 
	\end{enumerate}
\end{lemma}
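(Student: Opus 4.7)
The plan hinges on a single pointwise algebraic identity: the Hessian, viewed as a $(1,1)$-tensor $A := \He f$ via $AX = \nabla_X \nabla f$, commutes with $J$. To establish $JA = AJ$, I would raise an index on (\ref{grs}) to obtain the operator identity $A = \lambda\, \id - \Rc$, where $\Rc$ is now regarded as a self-adjoint endomorphism. The Corollary at the end of Section 2.1 records that on a K\"ahler manifold $\Rc$ is $J$-invariant, which at the endomorphism level reads $J \circ \Rc = \Rc \circ J$; the same commutation therefore passes to $A$.

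For (1), it suffices to check $(\lie_{J\nabla f}\, g)(X, Y) = 0$. Expanding via the Levi-Civita connection and using $\nabla J = 0$ on a K\"ahler manifold, the two summands $g(\nabla_X (J\nabla f), Y)$ and $g(\nabla_Y (J\nabla f), X)$ become $g(JAX, Y)$ and $g(JAY, X)$ respectively. Using skew-symmetry of $J$ with respect to $g$ together with symmetry of $A$, their sum collapses to $g\bigl((JA - AJ)X, Y\bigr)$, which vanishes by the commutation identity.

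For (2), I would expand $(\lie_{\nabla f} J)(Y) = [\nabla f, JY] - J[\nabla f, Y]$ using the torsion-free formula $[X, Y] = \nabla_X Y - \nabla_Y X$ and $\nabla J = 0$. The two covariant-transport terms $\nabla_{\nabla f}(JY)$ and $J\nabla_{\nabla f} Y$ cancel, leaving $-\nabla_{JY}\nabla f + J\nabla_Y \nabla f = -A(JY) + J(AY) = (JA - AJ)Y$, which is again zero. Since both conclusions depend only on the pointwise identity $JA = AJ$, no completeness or global hypotheses beyond the K\"ahler GRS definition are required, and I do not anticipate a substantive obstacle beyond keeping track of signs in the skew-symmetry manipulation.
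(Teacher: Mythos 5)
Your proof is correct and follows essentially the same route as the paper's: both arguments reduce to the soliton equation plus $\nabla J=0$ and the $J$-invariance of $\Rc$ (which, at the endomorphism level, is exactly your commutation identity $JA=AJ$). The paper carries out the computation with the bilinear forms $\He f$, $\Rc$, and $g$ rather than with the $(1,1)$-endomorphism $A$, but the cancellations are the same.
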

On an almost Hermitian GRS $(M, g, J, f)$, one may consider transformations and vector fields preserving each individual structure: the metric $g$, the almost complex structure $J$, and the potential function $f$. The group of such symmetry is clearly a closed subgroup of the group of isometry and, thus, is a Lie group. 



\subsection{Almost Contact Structure}
In this subsection, we recall important notions about an almost contact structure following the book by C. Boyer and K. Galicki \cite{BGbookSasakian08}. 

\begin{definition}
	A $(2n+1)$-dimensional manifold $M$ is an almost contact manifold if there exists a triple $(\zeta, \eta, \Phi)$ where $\zeta$ is a vector field, $\eta$ is a $1$-form, $\Phi$ is a tensor field of type $(1, 1)$, and they satisfy, 	everywhere on $M$,  
	\[\eta(\zeta)=1 \text{  and } \Phi^2=-\id+\zeta\otimes \eta.\]

\end{definition}
\begin{definition} An almost contact manifold $(M, \zeta, \eta, \Phi)$ with a Riemannian metric $g$ is called an almost contact metric structure if 
	\[g(\Phi(X), \Phi(Y))=g(X, Y)-\eta(X)\eta(Y). \]
\end{definition}
\begin{definition}
	The holomorphic or $\Phi$-sectional curvature of an almost contact manifold $(M, \zeta, \eta, \Phi)$ is given by, for $\eta(X)=0$ and $g(X, X)=1$,
	\[K_\Phi(X)=K(X, \Phi(X)).\]
\end{definition}

Closely related is the notion of a contact structure. 
\begin{definition}
	A $(2n+1)$-dimensional manifold $M$ is a contact manifold if there exists a $1$-form $\eta$, called a contact $1$-form, on $M$ such that
	\[\eta \wedge (d\eta)^n \neq 0\]
	everywhere on $M$. A contact structure is an equivalence class of such $1$-forms. 
\end{definition}

\begin{definition}
	An almost contact metric structure $(M, \zeta, \eta, \Phi, g)$ is called a contact metric structure if one further assumes
	\[g(X, \Phi(Y))=d\eta(X, Y).\]
\end{definition}
It is immediate to check that a contact metric structure is indeed a contact manifold by the above definition. As $\zeta$ and $\Phi$ are uniquely determined by $\eta$ and $g$, we also denote a contact metric manifold by $(M, \eta, g)$.  

\begin{definition} A contact metric structure $(M, g, \eta)$ is called Sasakian if the cone $C(M)=M\times \mathbb{R}^+$ with the cone metric $r^2 g+ dr^2$ is K\"{a}hler. 
\end{definition}




Next we recall certain transformations which will play crucial roles.

\begin{definition}
	\label{Sasahomothety}
	Let $(M, \zeta,\eta, \Phi, g)$ be an almost contact metric structure. For $a>0$, a transverse $a$-homothety deformation is given by 
	\[ \hat{\zeta} =\frac{1}{a}\zeta, ~~~ \hat{\eta} =a\eta, ~~~ \hat{\Phi}=\Phi,~~~ \hat{g}= ag+(a^2-a)\eta\otimes \eta. \]
\end{definition}
If $(M, \zeta,\eta, \Phi, g)$ is Sasakian, then so is its homothety transformation. 
\begin{definition}
	\label{Sasadeform}
	Let $(M, \zeta,\eta, \Phi, g)$ be an almost contact metric structure. For $a>0$, an $\pm a$-deformation is given by 
	\[ \zeta^\ast =\zeta, ~~~ \eta^\ast =\eta, ~~~ \Phi^\ast=\pm\Phi,~~~ g^\ast= ag+(1-a)\eta\otimes \eta. \]
\end{definition}
 A $\pm a$-deformation of a Sasakian manifold is not necessarily Sasakian.

\subsection{Model Spaces}

Using the submersion toolkit, we can describe several model spaces that will appear in our classification. First, the {unitary space} is the complex formulation of the Euclidean space $\mathbb{C}^n=\mathbb{R}^{2n}$ with standard coordinates $\{x_1, y_1,..., x_n, y_n\}$. The metric, the almost complex structure, and the fundamental $2$-form are as follows:
\begin{align*}
	g&=\sum_i (d{x^i})^2+ (dy^i)^2,\\
	J &= \sum_{i} (\partial_{y_i}\otimes dx^i-\partial_{x_i}\otimes dy^i),\\
	\omega_{\mathbb{C}^n} &= -2\sum_i dx^i\wedge dy^i.
\end{align*}

\textbf{The flat Sasakian space $(P, g_P)=\mathbb{R}^{2n+1}(-3)$}: the total space of a real line bundle over $\mathbb{C}^n$ with coordinates $\{x_1, y_1,..., x_n, y_n, z\}$. For $\eta = dz+2\sum_i y_i dx_i$, one considers: 
\begin{align*}	
	g_P &=\sum_i \big(((d{x_i})^2+ (dy_i)^2 \big)+\eta\otimes \eta,\\ 
	\Phi &=\sum_i \big(\partial_{y_i}\otimes dx^i-(\partial_{x_i}-2y_i\partial_z)\otimes dy^i. \big).
\end{align*}
It is readily verified, by Lemma \ref{submersioncurvature}, that $(P, g_P, \eta, \partial_z, \Phi)$ is Sasakian with constant $\Phi$-sectional curvature $-3$.

\textbf{The spherical Sasakian $(P, g_P)=\mathbb{S}^{2n+1}(a)$}: For simplicity, we utilize the ambient coordinates of $\mathbb{R}^{2n+2}$, $\{x_1, y_1,..., x_{n+1}, y_{n+1}\}$. All tensors described below are understood as their restriction to the unit sphere.
With the induced metric, the canonical Sasakian structure on $\mathbb{S}^{2n+1}$ is given by
\begin{align*}
	\zeta &= \sum_i (y_i\partial_{x_i}-x_i\partial_{y_i}),\\
	\eta &= \sum_i (y_i dx^i- x_i dy^i),\\
	\Phi &=\sum_{i, j}(x_ix_j-\delta_{ij})\partial_{x_i}\otimes dy_j-(y_iy_j-\delta_{ij})\partial_{y_i}\otimes dx_j+ x_jy_i\partial_{y_i}\otimes dy_j- x_iy_j\partial_{x_i}\otimes dx_j
\end{align*}
Let $\pi: \mathbb{S}^{2n+1}\mapsto N=\mathbb{CP}^n$ be the Hopf fibration and $g_N$ the Fubini-Study metric. The Sasakian metric can be realized as 
\[g = \pi^\ast g_N +\eta\otimes \eta, ~~ d\eta =\pi^\ast \omega_N.\]
Via a homothetic deformation (Definition \ref{Sasahomothety}), if $g_N$ is scaled to have Ricci curvature $k\id$, $k>0$, then the constant $\Phi$-sectional curvature $a$ of $g_P$ is, by Lemma \ref{submersioncurvature},
\[a=\frac{4k}{n+1}-3>-3.\] 
%

%


\textbf{The hyperbolic Sasakian $(P, g_P)=\mathbb{SB}^{2n+1}(a)$}: Let $g_0$ be the Bergman metric of constant sectional curvature $-1$ in the unit ball in $\mathbb{C}^n$. One then scales it to have Ricci curvatutre $k\id$, for $k<0$ and denote such construction by $N=B^n_\mathbb{C}(k)$ with metric $g_N$. Let $\omega_N$ be the corresponding K\"{a}hler form and, since $B^n_\mathbb{C}(k)$ is simply connected, there exists $1$-form $\alpha$ such that $d\alpha=\omega_N$. On the total space of the line bundle $P=B^n_\mathbb{C}(k)\times \mathbb{R}$ with natural projection $\pi$, one considers: 
\begin{align*}
	g_P &= \pi^\ast g_N+\eta\otimes \eta,\\
	\eta &= dz+ \pi^\ast\alpha.
\end{align*} 
By Lemma \ref{submersioncurvature}, the $\Phi$-sectional curvature of $(P, g_P)$ is
\[a=\frac{k}{2n-1}-3<-3.\]

\begin{theorem}\cite{tanno69sasa}
	\label{tannoSasa}
	Let $(M^{2n+1}, g, \eta, \Phi, \zeta)$ be a simply connected complete Sasakian manifold with constant $\Phi$-sectional curvature $H$ then it must be isometric to:
	 \begin{enumerate}[label=(\roman*)]
	 	\item ($H>-3$) the Sasakian sphere $\mathbb{S}^{2n+1}(H)$, 
	 	\item ($H=-3$) the flat Sasakian space $\mathbb{R}^{2n+1}(-3)$,  
	 	\item ($H<-3$) the Sasakian disk model $\mathbb{SB}^{2n+1}(H)$.   
	 \end{enumerate} 
\end{theorem}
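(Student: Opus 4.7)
The plan is to prove this as the Sasakian analogue of the Kähler space form theorem, following the standard strategy: first derive a universal algebraic formula that expresses the full Riemann tensor in terms of $H$, then invoke a Cartan--Ambrose--Hicks type uniqueness statement, matching the result against the three explicit models already constructed in the excerpt.

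\emph{Step 1 (curvature identity).} First I would collect the basic Sasakian identities $\nabla_X \zeta = -\Phi X$, $(\nabla_X \Phi)Y = g(X,Y)\zeta - \eta(Y)X$, and $R(X,Y)\zeta = \eta(Y)X - \eta(X)Y$, which follow from the defining property that the metric cone $(C(M),\bar g)$ is Kähler. Using these, for any unit vector $X \perp \zeta$ the curvature $K(X,\zeta) = 1$ is automatic. Polarising the assumption $K_\Phi(X) = H$ for all unit $X \perp \zeta$, in exactly the way one polarises constant holomorphic sectional curvature in Kähler geometry, one obtains the explicit formula
\begin{align*}
R(X,Y)Z &= \tfrac{H+3}{4}\bigl(g(Y,Z)X - g(X,Z)Y\bigr) \\
&\quad + \tfrac{H-1}{4}\bigl(\eta(X)\eta(Z)Y - \eta(Y)\eta(Z)X + g(X,Z)\eta(Y)\zeta - g(Y,Z)\eta(X)\zeta \\
&\quad + g(X,\Phi Z)\Phi Y - g(Y,\Phi Z)\Phi X + 2g(X,\Phi Y)\Phi Z\bigr).
\end{align*}
This is the Sasakian counterpart of the standard constant-holomorphic-sectional-curvature expression, and its derivation is purely algebraic once the Sasakian identities are in hand.

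\emph{Step 2 (model verification).} Next I would check directly that each of the three model spaces $\mathbb{S}^{2n+1}(H)$, $\mathbb{R}^{2n+1}(-3)$, and $\mathbb{SB}^{2n+1}(H)$ is a simply connected complete Sasakian manifold whose curvature tensor satisfies the identity of Step 1 with the prescribed value of $H$. For $\mathbb{S}^{2n+1}(H)$ and $\mathbb{SB}^{2n+1}(H)$ this follows from the submersion curvature computation (Lemma \ref{submersioncurvature}) combined with the formulas $H = \tfrac{4k}{n+1} - 3$ and $H = \tfrac{k}{2n-1} - 3$ already recorded; the homothety family from Definition \ref{Sasahomothety} covers all values of $H$ in the respective ranges. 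For $\mathbb{R}^{2n+1}(-3)$ the verification is direct from the explicit tensors $g_P$ and $\Phi$ given in the excerpt.

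\emph{Step 3 (rigidity).} Finally I would invoke Cartan's theorem for locally isometric, simply connected, complete Riemannian manifolds: because the curvature tensor of $M$ at every point agrees, via any linear isometry of tangent spaces that intertwines $(\eta,\zeta,\Phi)$, with the curvature tensor of the corresponding model, one constructs a global isometry $F:M \to \text{model}$ by developing geodesics. The structure tensors $(\eta,\zeta,\Phi)$ are automatically preserved by $F$ because they are determined pointwise by the Sasakian curvature formula (for example $\zeta$ is recovered as the unique direction on which the curvature operator has eigenvalue $1$ on every $2$-plane containing it, and $\Phi$ is recovered from the $(H-1)/4$-block of $R$).

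The main obstacle is not the algebraic Step 1 but ensuring in Step 3 that the Cartan development map is genuinely an isomorphism of \emph{Sasakian} structures, not merely of Riemannian ones. Concretely, one must verify that the linear isometries between tangent spaces used to build $F$ can be chosen simultaneously to intertwine $\Phi$ and map $\zeta$ to $\zeta$; this reduces to checking that parallel transport along geodesics in a Sasakian space form commutes with $\Phi$ modulo the $\zeta$-direction, which in turn follows from the Sasakian identities of Step 1. Once this compatibility is secured, the theorem follows.
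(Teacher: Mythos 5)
This statement is imported into the paper by citation to Tanno's 1969 article; the paper itself gives no proof of Theorem \ref{tannoSasa}, so there is nothing internal to compare against, and your proposal must be judged on its own. On those terms it is correct in outline and is essentially the classical argument: the displayed curvature identity is the standard Ogiue--Tanno formula for a Sasakian space form, the three models do realize every value of $H$ in the stated ranges via the transverse homothety of Definition \ref{Sasahomothety} together with Lemma \ref{submersioncurvature}, and a Cartan--Ambrose--Hicks development then yields the isometry. The one genuinely different (and historically the original) route is worth noting: Tanno normalizes first, using the transverse $a$-homothety to move any $H>-3$ structure onto the round sphere's value and treating $H=-3$ and $H<-3$ by the analogous reductions, or equivalently passes to the local Reeb quotient, which is a K\"ahler manifold of constant holomorphic sectional curvature $H+3$, and then pulls back the known K\"ahler space form classification through the fibration. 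That route buys you the rigidity step almost for free, at the cost of having to control the global topology of the fibration; your route keeps everything on $M$ but pushes all the difficulty into the CAH verification.

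Two cautions on your write-up. First, the polarisation in Step 1 is not ``exactly'' the K\"ahler computation: since $\nabla\Phi\neq 0$, the hybrid symmetries $R(X,Y,\Phi Z,\Phi W)=R(X,Y,Z,W)+(\text{terms in }g,\eta)$ carry correction terms coming from $R(X,Y)\zeta=\eta(Y)X-\eta(X)Y$, and these must be tracked to get the coefficients $\tfrac{H+3}{4}$ and $\tfrac{H-1}{4}$ right. Second, in Step 3 your pointwise recovery of $\zeta$ from the curvature fails when $H=1$ (constant sectional curvature, every direction has $K(X,v)=1$), so the intertwining of $(\eta,\zeta,\Phi)$ cannot be read off from $R$ alone; it must be propagated along the development by the first-order system $\nabla_{\dot\gamma}\zeta=-\Phi\dot\gamma$, $(\nabla_{\dot\gamma}\Phi)Y=g(\dot\gamma,Y)\zeta-\eta(Y)\dot\gamma$ and ODE uniqueness, which is exactly the mechanism you sketch in your final paragraph. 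With that argument made explicit the proof is complete.
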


As described earlier, Sasakian manifolds belong to the family of almost contact metric structures which also include the following. For $P=\mathbb{R}\times \mathbb{C}^n$ and a constant $A$, 
\[g_P=dz^2+e^{2A z}g_{\mathbb{C}^{n-1}}.\]
One realizes it as a hyperbolic metric $\mathbb{H}^{2n+1}(-A^2)$. 
\begin{lemma}
	\label{curvatrehyper}
	The sectional and Ricci curvature of the hyperbolic metric $g_P$, for orthonormal vectors $X, Y$ on $\mathbb{C}^n$ and $\partial_z$ along $\mathbb{R}$,
	\begin{align*}
		K(\partial_z, X) &=-A^2 =K(X, Y)\\
		\Rc(\partial_z, \partial_z) &=-2nA^2=\Rc(X_i, X_i). 
	\end{align*}
\end{lemma}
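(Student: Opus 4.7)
The plan is to recognize $g_P = dz^2+e^{2Az}g_{\mathrm{Euc}}$ (note that the dimension count $2n+1$ forces the fiber factor to be $\mathbb{C}^n$; the ``$\mathbb{C}^{n-1}$'' in the displayed metric appears to be a typo) as a warped product of an interval over a flat fiber, and then apply the standard warped product sectional curvature identities and a trace for the Ricci piece.

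First, I would write $g_P = dz^2+\phi^2 g_{\mathrm{Euc}}$ with $\phi(z)=e^{Az}$, so that $\phi''/\phi=A^2$ and $(\phi')^2/\phi^2=A^2$. For $g_P$-orthonormal vectors $X, Y$ tangent to the $\mathbb{C}^n$ factor, the warped product sectional curvature formulas (a special case of the submersion calculus recalled in the appendix) give
\begin{align*}
K(\partial_z, X) &= -\frac{\phi''}{\phi}=-A^2,\\
K(X, Y) &= \frac{K_{\mathrm{Euc}}(X,Y)-(\phi')^2}{\phi^2}=-A^2,
\end{align*}
where I used flatness of $g_{\mathrm{Euc}}$ in the second line. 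The rescaling that converts a $g_{\mathrm{Euc}}$-orthonormal vector to a $g_P$-orthonormal one leaves sectional curvature unchanged in each slot, so these identities apply directly to the orthonormal frame in the statement.

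Next, I would fix a $g_P$-orthonormal frame $\{\partial_z, X_1,\dots, X_{2n}\}$ at a point and take traces:
\[
\Rc(\partial_z,\partial_z)=\sum_{i=1}^{2n}K(\partial_z, X_i)=-2nA^2,
\]
and, for each $i$,
\[
\Rc(X_i,X_i)=K(X_i,\partial_z)+\sum_{j\neq i}K(X_i, X_j)=-A^2-(2n-1)A^2=-2nA^2,
\]
which are precisely the stated identities.

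There is essentially no obstacle: the whole argument reduces to plugging into warped product formulas. As a sanity check, the change of variables $w=\tfrac{1}{A}e^{-Az}$ transforms $g_P$ into $\tfrac{1}{A^2 w^2}(dw^2+g_{\mathrm{Euc}})$, identifying $(P,g_P)$ with the upper half-space model of real hyperbolic space rescaled to have constant sectional curvature $-A^2$; this corroborates both the sectional and Ricci computations above. The only minor care point is the bookkeeping distinction between fiber-orthonormal and total-space-orthonormal frames when invoking the warped product identities.
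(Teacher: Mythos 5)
Your computation is correct. The paper states Lemma \ref{curvatrehyper} without proof, so there is no argument to compare against; your warped-product calculation is the standard one, and it is equivalent to specializing the paper's own hypersurface/Riccati formulas (\ref{Rccomp}) to the transverse coordinate $z$: the slices $\{z=\mathrm{const}\}$ are flat with shape operator $L=A\,\id$ and $L'=0$, so $\Rc(\partial_z,\partial_z)=-\tr(L')-\tr(L^2)=-2nA^2$ and $\Rc(X,X)=-\tr(L)\,g(LX,X)=-2nA^2$ directly. Your dimension bookkeeping is also right: since the total space is realized as $\mathbb{H}^{2n+1}(-A^2)$ with $\Rc=-2nA^2$, the fiber must be $\mathbb{C}^{n}=\mathbb{R}^{2n}$, so the ``$g_{\mathbb{C}^{n-1}}$'' in the displayed metric of that subsection is a typo (the $\mathbb{C}^{n-1}$ version, with the extra $dt^2$ factor, belongs to the remark after Theorem \ref{main1}). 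The upper-half-space change of variables is a sound independent check.
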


All  above models appear in the following result. Let $(P^{2n+1}, g, \eta, \zeta, \Phi)$ be an almost contact metric structure. The symmetry group is to preserve both $g$, $\eta$, $\zeta$ and $\Phi$.  
\begin{theorem}\cite{tanno69} 
	\label{tannoclassificationalmost}
The maximum dimension of the symmetry group is $(n+1)^2$. It is attained iff the sectional curvature for $2$-planes which contain $\zeta$ is a constant $C$ and the manifold is one of the following spaces:
	\begin{enumerate} [label=(\roman*)]
		\item $C>0$: an $\pm b$ deformation of a homogeneous Sasakian manifold with constant $\Phi$-sectional curvature $H$ or, precisely,
		\begin{itemize}
			\item $H>-3:$ the Sasakian sphere $\mathbb{S}^{2n+1}(H)$ or its quotient by a finite group generated by $\text{exp}(t\zeta)$ for $2\pi/t$ being an integer, 
			\item $H=-3$: the flat Sasakian space $\mathbb{R}^{2n+1}(-3)$ or its quotient by a cyclic group generated by $\text{exp}(t\zeta)$, 
			\item $H<-3$: the Sasakian disk model $\mathbb{SB}(H)$ or its quotient by a cyclic group generated by $\text{exp}(t\zeta)$. 
		\end{itemize}
		\item $C=0$: six global Riemannian product $X\times \mathbb{CP}^{n-1}(k)$, $X\times \mathbb{C}^{n-1}$, $X\times B^{n-1}_{\mathbb{C}}(k)$ where $X$ is a line or a circle;
		\item $C<0$ the hyperbolic space $\mathbb{H}^{2n+1}(C)$. 
	\end{enumerate}
\end{theorem}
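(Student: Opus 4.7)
The plan is to estimate the dimension of the Lie algebra of infinitesimal symmetries pointwise and then analyze the equality case via the isotropy representation. First I would fix a point $p$ and use that an infinitesimal symmetry $X$ is a Killing field, hence determined by the pair $(X_p, (\nabla X)_p) \in T_pM \oplus \mathfrak{so}(T_pM)$. The image of $X \mapsto X_p$ has real dimension at most $2n+1$, while an infinitesimal symmetry that vanishes at $p$ embeds, via $X \mapsto (\nabla X)_p$, into the space of skew-symmetric endomorphisms $A$ that infinitesimally preserve the structure. The conditions $\lie_X\zeta|_p = \lie_X\eta|_p = \lie_X\Phi|_p = 0$ translate into $A\zeta_p = 0$ and $A\circ \Phi_p = \Phi_p\circ A$. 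Combined with skew-symmetry, these say that $A$ annihilates $\mathbb{R}\zeta_p$ and restricts to a skew-Hermitian endomorphism of the complex vector space $(\ker\eta_p, \Phi_p|_{\ker\eta_p})$, so $A$ lies in a copy of $\mathfrak{u}(n)$ of real dimension $n^2$. Adding yields the bound $(2n+1) + n^2 = (n+1)^2$.

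Next I would assume equality. Then the orbit map is surjective at $p$, so the symmetry group acts transitively on each connected component, and the isotropy at $p$ contains the full $\mathfrak{u}(n)$ acting on $\ker\eta_p$ via the standard representation. A Weyl-type invariant theory computation for $U(n)$ acting on $T_pM = \mathbb{R}\zeta_p \oplus \ker\eta_p \cong \mathbb{R} \oplus \mathbb{C}^n$ shows that the curvature tensor at $p$ must be determined by a small number of scalars; in particular, all sectional curvatures of two-planes containing $\zeta$ agree to a common value $C$, and all $\Phi$-sectional curvatures agree to a common value $H$. Transitivity then promotes $C$ and $H$ to global constants on $P$.

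I would then split on the sign of $C$. When $C>0$, I would apply an $\pm a$-deformation from Definition \ref{Sasadeform} with the parameter $a$ chosen so that the sectional curvature of two-planes containing $\zeta$ becomes $1$; one verifies that the deformed structure $(g^\ast, \eta, \zeta, \pm\Phi)$ satisfies the Sasakian identity $d\eta = 2 g^\ast(\cdot, \Phi\cdot)$ and is therefore Sasakian with constant $\Phi$-sectional curvature, so Theorem \ref{tannoSasa} identifies it as $\mathbb{S}^{2n+1}$, $\mathbb{R}^{2n+1}(-3)$, or $\mathbb{SB}^{2n+1}$ up to a quotient by the flow of $\zeta$. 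When $C=0$, the $U(n)$-invariance forces $\nabla\zeta = 0$, so $\zeta$ is parallel and $P$ locally splits as a Riemannian product of a one-dimensional factor $X$ (a line or a circle) with a $2n$-dimensional K\"ahler manifold of maximal automorphism group, which by Tanno's earlier K\"ahler classification \cite{tanno69Hermitian} must be $\mathbb{CP}^{n-1}(k)$, $\mathbb{C}^{n-1}$, or $B^{n-1}_{\mathbb{C}}(k)$, giving the listed six products. When $C<0$, the same invariant analysis combined with the second Bianchi identity forces the entire curvature tensor to have constant sectional curvature $C$, identifying $P$ with $\mathbb{H}^{2n+1}(C)$.

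The main obstacle is the $C>0$ step: one must pick the deformation parameter correctly and verify from the $U(n)$-invariance of the curvature and the differential identities $\lie_X\eta = 0$ that after the deformation the torsion-type tensor characterizing Sasakian structures vanishes. Concretely this reduces to extracting, from the isotropy, the relation between $d\eta$ and the restriction of $g$ and $\Phi$ to $\ker\eta$, which is exactly what the $\pm a$-deformation is designed to rectify. Once this is in place, the Sasakian classification of Theorem \ref{tannoSasa} finishes the argument, and the remaining quotients are accounted for by noting that the center of the symmetry group is generated by the flow of $\zeta$.
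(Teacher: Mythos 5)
This statement is quoted from Tanno's 1969 paper and the present paper gives no proof of it, so there is no internal argument to compare against; the closest thing in the text is the isotropy-counting sketch used for Corollary \ref{maxdim}, which is the same pointwise strategy you adopt. Your dimension bound is correct and is exactly how the result is obtained: a Killing field is determined by $(X_p,(\nabla X)_p)$, the isotropy condition forces $(\nabla X)_p$ to annihilate $\zeta_p$ and commute with $\Phi_p$, hence to lie in a copy of $\mathfrak{u}(n)$ acting on $\ker\eta_p$, giving $(2n+1)+n^2=(n+1)^2$.

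The equality case, however, is where the substance of the theorem lives, and your proposal leaves the decisive step asserted rather than proved. Invariance of the \emph{curvature tensor} under the full isotropy $U(n)$ is not what drives the trichotomy; what one actually needs is that the first-order invariants of the structure, above all $(\nabla\zeta)_p$, are $U(n)$-equivariant, which forces $\nabla\zeta=\alpha\,\Phi+\beta(\mathrm{id}-\zeta\otimes\eta)$ for constants $\alpha,\beta$ (transitivity promoting them from point to global). The sign of $C$ is then read off from $\alpha$ and $\beta$: $\beta=0$, $\alpha\neq 0$ gives $d\eta(X,Y)=\alpha\, g(X,\Phi Y)$, so a transverse homothety and $\pm b$-deformation produce a genuine contact metric (indeed Sasakian) structure to which Theorem \ref{tannoSasa} applies; $\alpha=\beta=0$ gives $\zeta$ parallel and the six products; $\alpha=0$, $\beta\neq 0$ gives the umbilic/warped case and $\mathbb{H}^{2n+1}(C)$. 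You flag the $C>0$ verification as ``the main obstacle'' but do not carry it out, and your appeal to a ``Weyl-type invariant theory computation'' on the curvature alone would not by itself yield the relation between $d\eta$, $g$, and $\Phi$ that the deformation is meant to rectify. Two smaller points: completeness (needed to invoke Theorem \ref{tannoSasa} and to identify the quotients by the flow of $\zeta$) is used silently, and in the $C<0$ case the jump from the invariant analysis to constant sectional curvature needs the Schur-type argument spelled out. As a reconstruction of Tanno's route the outline is faithful, but as a proof it is incomplete precisely at the classification step.
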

For a Sasakian model with submersion $\pi: P\mapsto N$ with $N=\mathbb{N}(k)$, the metric can always be written as 
\[g=g_N+ \eta\otimes \eta, ~~ d\eta=\pi^\ast \omega_N.\]

\begin{lemma}
	\label{curvaturedeform}
	If $(M, \zeta', \eta', \Phi', g')$ is obtained via a transverse $a$-homothety and an $\pm b$-deformation then
	\[g'=ba g_N+a^2 \eta\otimes \eta,~~ \zeta'=\frac{1}{a}\zeta, ~~~, \eta'=a \eta, ~~~\Phi'=\pm \Phi.\] 
\end{lemma}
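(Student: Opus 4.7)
The plan is to apply Definition \ref{Sasahomothety} and Definition \ref{Sasadeform} in sequence, observing that the $\eta \otimes \eta$ terms collapse cleanly. Since both the transverse $a$-homothety and the $\pm b$-deformation are defined explicitly on the tuple $(\zeta, \eta, \Phi, g)$, the only real content is tracking the scalar coefficients in the metric.

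First, I would start from the Sasakian submersion form $g = g_N + \eta \otimes \eta$ of the hypothesis and apply the transverse $a$-homothety. By Definition \ref{Sasahomothety}, this yields $\hat{g} = a g + (a^2 - a)\eta\otimes\eta$. Substituting $g = g_N + \eta\otimes \eta$, the $\eta\otimes\eta$ coefficient becomes $a + (a^2-a) = a^2$, so $\hat{g} = a g_N + a^2 \eta\otimes\eta$, with $\hat{\zeta} = \zeta/a$, $\hat{\eta} = a\eta$, and $\hat{\Phi} = \Phi$.

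Next, I would apply the $\pm b$-deformation (Definition \ref{Sasadeform}) to $(\hat{\zeta}, \hat{\eta}, \hat{\Phi}, \hat{g})$. The key point is that the deformation uses the current one-form $\hat{\eta}$, so the relevant tensor is $\hat{\eta}\otimes\hat{\eta} = a^2 \eta\otimes\eta$. Thus
\[
g' = b\hat{g} + (1-b)\hat{\eta}\otimes\hat{\eta} = b\bigl(a g_N + a^2 \eta\otimes\eta\bigr) + (1-b)a^2 \eta\otimes\eta = ab\, g_N + a^2 \eta\otimes \eta,
\]
since $ba^2 + (1-b)a^2 = a^2$. Meanwhile $\zeta' = \hat{\zeta} = \zeta/a$, $\eta' = \hat{\eta} = a\eta$, and $\Phi' = \pm \hat{\Phi} = \pm \Phi$, matching the claim.

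There is no serious obstacle here; the only thing to watch is the order of composition and the fact that the $\pm b$-deformation's $\eta\otimes\eta$ term refers to the post-homothety one-form, which is where the factor $a^2$ (rather than $1$) enters and causes the $b$-dependence in the fiber direction to cancel exactly.
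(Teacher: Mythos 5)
Your proposal is correct and follows essentially the same computation as the paper: apply the transverse $a$-homothety to $g = g_N + \eta\otimes\eta$ to get $a g_N + a^2\eta\otimes\eta$, then apply the $\pm b$-deformation with respect to the post-homothety form $\hat\eta = a\eta$, so the fiber coefficient $ba^2 + (1-b)a^2 = a^2$ is unchanged. Nothing further is needed.
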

\begin{proof}
	Via a transerver $a$-homothety transformation: 
	\begin{align*}
		g^\ast &=a g+(a^2-a)\eta\otimes \eta= a g_N+a^2 \eta\times \eta=a g_N+\eta^\ast \otimes \eta^\ast;\\
		\eta^\ast &= a \eta;~~	\zeta^\ast = \frac{1}{a}\zeta;~~		\Phi^\ast = \Phi.	
	\end{align*}
	Via a $\pm b$-deformation:
	\begin{align*}
		g' &=b g^\ast+ (1-b)\eta^\ast\otimes \eta^\ast= ba g_N+b \eta^\ast\otimes \eta^\ast+ (1-b)\eta^\ast\otimes \eta^\ast\\
		&= ba g_T+\eta^\ast\otimes \eta^\ast= ba g_N+a^2 \eta\otimes \eta\\
		\eta' &=\eta^\ast=a \eta,~~\zeta'=\zeta^\ast=\frac{1}{a}\zeta,~~
		\Phi' = (\pm)\Phi.  
	\end{align*}
\end{proof}

\section{Cohomogeneity One Ansatz}
\label{coho1ansatz}
Here we assume the cohomogeneity one symmetry and collect calculation related to ansatz \ref{ansatz}. The setup follows \cite{DW11coho} closely. Let $G$ be a Lie group acting isometrically on a Riemannian manifold $(M, {g})$. Supposed that there is a dense subset $M_0\subset M$ such that, locally, there is a $G$-equivariant diffeomorphism:
\[\Psi: I\times P\mapsto M_0 \text{ given by } \Psi(t, hK)=h\cdot \gamma(t).\]
Here, $I$ is an interval; $\gamma(t)$ is a unit speed geodesic intersecting all orbits orthogonally; $P=G/K$ where $K$ is the istropy group along $\gamma(t)$. It follows that 
\[\Phi^\ast ({g})={g}= dt^2+ {g}_t\]
where ${g}_t$ is a one-parameter family of $G$-invariant metrics on $G/K$.  
For unit vector fields $\nu=\Phi_{\ast}(\partial_t)$, 
let $L$ denote the shape operator
\begin{align*}
L(X) &= {\nabla}_X \nu.
\end{align*}

We will consider $L_t=L_{\mid_{\Psi(t\times P)}}$ to be a one-parameter family of endormorphisms on $TP$ via identification $T(\Psi(t\times P))=TP$. Following \cite{DW11coho}, one observes
\[\partial_t g= 2g_t\circ L_t .\]
Thanks to Gauss, Codazzi, and Riccati equations, the Ricci curvature of $(M_0, {g})$ is totally determined by the geometry of the shape operator and how it evolves. That is, for tangential vectors $X$ and $Y$, 
\begin{align}
	\Rc(X, Y) &=\Rc_t(X, Y)-\tr(L)g_t(LX, Y)-g_t({L'}(X), Y), \nonumber\\
	\label{Rccomp}
	\Rc(X, N) &=-\nabla_X \tr(L_t)-g(\delta L, X),\\ 
	\Rc(N, N) &= -\text{tr}({L'})-\text{tr}(L^2).\nonumber
\end{align}
Here $\Rc_t$ denotes the Ricci curvature of $(P, g_t)$, $\delta L=\sum_{i}\nabla_{e_i}L(e_i)$ for an orthonormal basis and $\tr{T}=\tr_{g_t}T_t$. \\

We are particularly interested in the metric given by Ansatz \ref{ansatz}. We recall 
\begin{align*}
	g=dt^2+ g_t&= dt^2+ F(t)^2 \pi^\ast g_N+ H(t)^2\eta\otimes \eta,\\
	\eta &= (dz+ q\pi^\ast\alpha), ~~d\alpha =\omega_{\mathbb{N}}.
\end{align*}  
Thus,
\[2g_t L_t= g'_t= 2\frac{H'}{H} H^2\eta\otimes \eta+2\frac{F'}{F} F^2\pi^\ast g_N.\]
For $\id$ denoting the identity operator on the horizontal subspace of $TP$, which is $g_t$- perpendicular to $\partial_z$,  
\begin{align*} 
	L_t &= \frac{H'}{H} \partial_z \otimes \eta + \frac{F'}{F}\id.\,\\
	{L'}_t &=\Big(\frac{H''}{H}-\big(\frac{H'}{H}\big)^2\Big) \partial_z \otimes {\eta}+\Big(\frac{F''}{F}-\big(\frac{F'}{F}\big)^2\Big)\id. 
\end{align*}
Consequently,
\begin{align*}
	\tr{L_t} &=\frac{H'}{H}+(2n-2)\frac{F'}{F},~~~ \tr{L^2_t} =(\frac{H'}{H})^2+(2n-2)\frac{(F')^2}{F^2},\\
	\tr{L'_t} &=\frac{H''}{H}+(2n-2)\frac{F''}{F}-\frac{(H')^2}{H^2}-(2n-2)\frac{(F')^2}{F^2}.
\end{align*}
A natural almost complex structure on $I\times P$ is constructed from one on $(N, g_N)$:
\[J = \partial_t \otimes H\eta - \frac{1}{H}\partial_z\otimes dt +\pi^\ast J_N.\]
Thus, the K\"{a}hler form becomes:
\begin{align*}
	\omega &= 2dt\wedge H\eta+ F^2 \pi^\ast\omega_N,\\
	d\omega &= -2qH dt \wedge \pi^\ast \omega_N+ 2FF' dt\wedge\pi^\ast\omega_N.
\end{align*}
Consequently, the metric is almost K\"{a}hler if and only if $FF'=qH.$

\begin{lemma}
	\label{ricciansatz}
	Let $(I\times P, g)$ be given as in ansatz \ref{ansatz}. Let $\{E_i\}_{i=1}^{2m}$ be horizontal lifts of eigenvectors of $\Rc_N$ to $(P, g_t)$. Then $\Rc_g$ is diagonalized as
	\begin{align*}
		\Rc(N, N) &=-\frac{H''}{H}-(2n-2)\frac{F''}{F},\\
		\Rc(\partial_z,{\partial_z}) &=H^2 \Big(\frac{H^2 q^2}{F^4}(2n-2)-\frac{H''}{H}-(2n-2)\frac{F'}{F}\frac{H'}{H} \Big),\\
		\Rc(E_i, E_i) 
		&=F^2 \Big(\frac{k}{F^2}-\frac{H^2 q^2}{F^4}2-\frac{F''}{F}-\frac{F'}{F}\frac{H'}{H}-(2n-3)\big(\frac{F'}{F}\big)^2 \Big).	
	\end{align*}
	If $(I\times P, g, J)$ is almost K\"{a}hler and $q=1$ then $FF'=H$ and
	\begin{align*}
		\Rc(N, N) &=-\frac{F'''}{F'}-(2n+1)\frac{F''}{F}=\Rc(\frac{\partial_z}{H},\frac{\partial_z}{H}),\\
		\Rc(E_i, E_i) 
		&=F^2 \Big(\frac{k}{F^2}-2n\big(\frac{F'}{F}\big)^2-2\frac{F''}{F}\Big).	
	\end{align*}
\end{lemma}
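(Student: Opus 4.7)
The strategy is to apply the cohomogeneity-one Ricci identities (\ref{Rccomp}) and reduce the calculation to the shape-operator data displayed just before the lemma together with the intrinsic Ricci $\Rc_t$ of the principal orbit $(P,g_t)$, which in turn is obtained from the Riemannian submersion $\pi:(P,g_t)\to (N,F^2 g_N)$.

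The normal component $\Rc(N,N)$ is immediate from the third identity of (\ref{Rccomp}): the $(H'/H)^2$ and $(2n-2)(F'/F)^2$ pieces appearing in $\tr L'$ and $\tr L^2$ cancel when added, leaving $-H''/H-(2n-2)F''/F$. The mixed entries $\Rc(N,X)$ for $X\in TP$ vanish by the middle identity of (\ref{Rccomp}), since $\tr L_t$ depends only on $t$ and $\delta L_t=0$ (because $L_t$ is a constant combination of the two $G$-invariant orthogonal projectors onto the fiber and horizontal distributions of $(P,g_t)$, and $\partial_z$ is Killing in $g_t$ so the fibers are totally geodesic).

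For the two diagonal tangential components, the first identity of (\ref{Rccomp}), together with $L_t(\partial_z)=(H'/H)\partial_z$ and $L_t(E_i)=(F'/F)E_i$, yields after algebra
\begin{align*}
\Rc(\partial_z,\partial_z) &= \Rc_t(\partial_z,\partial_z) - H^2\Big(\tfrac{H''}{H}+(2n-2)\tfrac{F'H'}{FH}\Big), \\
\Rc(E_i,E_j) &= \Rc_t(E_i,E_j) - \delta_{ij}F^2\Big(\tfrac{F''}{F}+\tfrac{F'H'}{FH}+(2n-3)\tfrac{(F')^2}{F^2}\Big),
\end{align*}
reducing the lemma to the two submersion identities $\Rc_t(\partial_z,\partial_z)=(2n-2)H^4q^2/F^4$ and $\Rc_t(E_i,E_j)=(k-2H^2q^2/F^2)\delta_{ij}$. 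These are instances of the O'Neill--Gray formulas (Lemma~\ref{submersioncurvature} of the appendix) for a submersion with one-dimensional totally geodesic fibers. The integrability $A$-tensor is controlled by $d\eta=q\pi^\ast\omega_N$: for horizontal lifts, $\eta([E_i,E_j])=-d\eta(E_i,E_j)=-q\omega_N(e_i,e_j)$, so $A_{E_i}E_j$ is a prescribed multiple of $\partial_z$. The horizontal Ricci contributes $\Rc_N=k\,\id$ from the base; the $A$-correction collapses to a multiple of $\delta_{ij}$ via the K\"ahler identity $\sum_k\omega_N(e_i,e_k)\omega_N(e_j,e_k)=\delta_{ij}$ (which follows from $\omega_N(\cdot,\cdot)=g_N(\cdot,J\cdot)$ and $J^2=-\id$), while the vertical Ricci uses the O'Neill symmetry $g(A_XV,Y)=-g(A_XY,V)$ together with $\sum_{i,j}\omega_N(e_i,e_j)^2=2(n-1)$ to produce the factor $(2n-2)$.

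The almost K\"ahler specialization with $q=1$ and $FF'=H$ is a routine substitution: differentiating $H=FF'$ gives $H'=(F')^2+FF''$ and $H''=3F'F''+FF'''$, and inserting these collapses the general formulas to the quoted expressions in terms of $F$ alone (noting in particular that $\frac{H^2q^2}{F^4}(2n-2)-(2n-2)\frac{F'H'}{FH}-\frac{H''}{H}$ rearranges to $-\frac{F'''}{F'}-(2n+1)\frac{F''}{F}$ after using $H=FF'$). The main technical nuisance is keeping the normalization factors straight between the $g_t$-orthonormal frame $\{\hat E_i=E_i/F,\ \hat U=\partial_z/H\}$ natural for the submersion formulas and the unnormalized frame $\{E_i,\partial_z\}$ of the statement, from which the factors of $H^2$ and $F^2$ in front of each expression arise by bilinearity.
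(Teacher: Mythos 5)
Your proposal is correct and follows essentially the same route as the paper: the normal and tangential components are read off from the Riccati identities (\ref{Rccomp}) using the shape-operator data $L_t$, $L'_t$ computed just before the lemma, the intrinsic Ricci $\Rc_t$ of $(P,g_t)$ is supplied by the O'Neill submersion formulas of Lemma \ref{submersioncurvature}, and the almost K\"ahler case is the substitution $H=FF'$. The paper's proof is a one-line citation of exactly these three ingredients; your write-up simply supplies the intermediate algebra (all of which checks out, including the vanishing of $\delta L$ and the frame normalizations).
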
 
\begin{proof}
It follows from  (\ref{Rccomp}), the calculation above, and Lemma \ref{submersioncurvature}.	

\end{proof}

If $f$ is invariant by the cohomogeneity one action, then equation (\ref{grs}) is reduced to
\begin{align}
	0 &=-(\delta L)-\nabla \tr{L},\nonumber \\
	\label{reducedsolitionsystem}
	\lambda &=-\text{tr}(L')-\text{tr}(L^2)+f'',\\
	\lambda g_t(X, Y) &=\Rc_t(X, Y)-(\tr{L}) g_t(L X, Y)-g_t({L'}(X), Y) +f' g_t(LX, Y). \nonumber
\end{align}


\begin{lemma}
	\label{solitonsubmer1}
	Let $(I\times P, g)$ be given as in ansatz \ref{ansatz} and $f=f(t)$ then the GRS equation (\ref{grs}) becomes 
	\begin{align*}
	{\lambda} &=-\frac{H''}{H}-(2n-2)\frac{F''}{F}+f''=\frac{H^2 q^2}{F^4}(2n-2)-\frac{H''}{H}-(2n-2)\frac{H'F'}{HF}+f'\frac{H'}{H}\\
	&=\frac{k}{F^2}-\frac{H^2 q^2}{F^4}2- \frac{F''}{F}-(2n-3)(\frac{F'}{F})^2-\frac{H'F'}{FH}+f'\frac{F'}{F}.
	\end{align*}
	If the metric is almost K\"{a}hler, then the system is simplified further
	\begin{align*}
		Hq &= FF'= \frac{q}{B} f',\\
		\lambda &= \frac{k}{F^2}-2n \big(\frac{F'}{F}\big)^2-2\frac{F''}{F}+f'\frac{F'}{F}.		
	\end{align*}
\end{lemma}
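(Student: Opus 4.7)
The plan is to derive the three displayed identities directly from the reduced soliton system (\ref{reducedsolitionsystem}), substituting the explicit shape operator data computed just before the lemma together with the Ricci curvatures of Lemma \ref{ricciansatz}. The only new ingredient required is the decomposition of $\He f$ under the hypothesis $f = f(t)$. Since $N = \partial_t$ is a unit speed geodesic field with $\nabla_N N = 0$, one has $\He f(N,N) = f''$. For $X$ tangent to $P$, the fact that $f'$ depends only on $t$ gives $X(f') = 0$, and $LX$ is again tangent to $P$ so $(LX)(f) = 0$; hence $\He f(X,N) = 0$, which is consistent with the mixed equation $0 = -\delta L - \nabla \tr L$ being automatic on the $G$-invariant ansatz. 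Finally, for $X, Y$ tangent to $P$ the normal component of $\nabla_X Y$ is $-g_t(LX,Y)\, N$, giving $\He f(X,Y) = f'\, g_t(LX,Y)$.

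The three ODEs then fall out by direct substitution. The $(N,N)$ equation of $\Rc + \He f = \lambda g$, combined with the values of $\tr L'$ and $\tr L^2$ already recorded in the preamble to the lemma, yields $\lambda = -H''/H - (2n-2)F''/F + f''$. Evaluating the tangential equation on the unit vectors $\partial_z/H$ and $E_i/F$, and using that $L$ is diagonal with eigenvalues $H'/H$ on the vertical direction and $F'/F$ on each horizontal eigenvector of $\Rc_N$, produces the remaining two identities directly from Lemma \ref{ricciansatz}.

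For the almost K\"ahler simplification, the hypothesis $FF' = qH$ yields $H^2 q^2/F^4 = (F'/F)^2$, and differentiating gives $qH' = (F')^2 + FF''$, so $H'/H = F'/F + F''/F'$. Substituting both relations into the horizontal equation causes the cross terms to collapse, leaving $\lambda = k/F^2 - 2n(F'/F)^2 - 2F''/F + f'F'/F$. To obtain $FF' = (q/B)f'$, I would subtract the $(N,N)$ equation from the vertical $(\partial_z,\partial_z)$ equation; after invoking $qH' = (F')^2 + FF''$ a second time, all curvature-like terms cancel in pairs, leaving the first-order relation $f'' = f' H'/H$. This integrates to $f' = BH$ for a constant of integration $B$, which together with $qH = FF'$ gives the claimed three-way equality.

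The main obstacle is purely computational bookkeeping: tracking the many instances of $H'/H$, $F'/F$, and their derivatives, and substituting $qH = FF'$ at the right moments. The one conceptual point worth noting is that the constant $B$ arises as a first integral of the system rather than as an algebraic identity, so one must verify that $f''/f' = H'/H$ holds as an ODE on an interval where $f' \neq 0$ and hence that $B$ is indeed a genuine constant on connected components.
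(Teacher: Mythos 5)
Your proposal is correct and follows essentially the same route as the paper: evaluate the reduced soliton system (\ref{reducedsolitionsystem}) on $N$, $\partial_z/H$, and $E_i/F$ using the shape-operator data and Lemma \ref{ricciansatz}, with $\He f(X,Y)=f'\,g_t(LX,Y)$ for $f=f(t)$, then substitute $qH=FF'$. The only cosmetic difference is that the paper obtains $f''=f'H'/H$ in one line from the $J$-invariance of $\He f$ (i.e.\ $\He f(N,N)=\He f(\partial_z/H,\partial_z/H)$), whereas you recover the same identity by subtracting the $(N,N)$ and $(\partial_z,\partial_z)$ equations and checking that the curvature terms cancel under the almost K\"ahler identities --- the same computation packaged differently, and your remark that $B$ is a first integral on intervals where $f'\neq 0$ is a fair point of care.
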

\begin{proof}
It follows from (\ref{reducedsolitionsystem}) and Lemma \ref{Rccomp}. Indeed, for $r=n-1$,
	\begin{align*}
	\lambda= \lambda g(N, N) &= \Rc(N, N)+ \He f(N, N)=-\frac{H''}{H}-(2r)\frac{F''}{F}+f''.
\end{align*}
Similarly, 
\begin{align*}	
	\lambda g(\partial_z, \partial_z)=H^2\lambda &= \Rc(\partial_z, \partial_z)+ \He f(\partial_z, \partial_z)=\Rc(\partial_z, \partial_z)+f' g(L\partial_z, \partial_z)\\
	&=H^2 \Big(\frac{H^2 q^2}{F^4}(2r)-\frac{H''}{H}-(2r)\frac{F'}{F}\frac{H'}{H}+f'\frac{H'}{H} \Big);\\
	\lambda g(E_i, E_i)= F^2\lambda &= \Rc(E_i, E_i)+ \He f(E_i, E_i)\\
	&=F^2 \Big(\frac{k}{F^2}-\frac{H^2 q^2}{F^4}2-\frac{F''}{F}-\frac{F'}{F}\frac{H'}{H}-(2r-1)\big(\frac{F'}{F}\big)^2+f'\frac{F'}{F} \Big).
	\end{align*}
	If the metric is almost K\"{a}hler, then $qH = FF'$. Since $\He f$ is $J$-invariant,
	\[f''=\He f(\nu, \nu)= \He f(\frac{\partial_z}{H},\frac{\partial_z}{H})= f'\frac{H'}{H}.\]
	Therefore, the system is reduced to 
	\begin{align*}
		\lambda &= -(2r+3)\frac{F''}{F}-\frac{F'''}{F'}+ f'',\\
		&= \frac{k}{F^2}-2n \big(\frac{F'}{F}\big)^2-2\frac{F''}{F}+f'\frac{F'}{F}.		
	\end{align*}
	Finally, one observes that $\ddt (\lambda F^2= k-2n F'^2-2F''F+ f'F'F)$ yields the first equation. 
\end{proof}

Thus, for a K\"{a}hler GRS constructed from ansatz \ref{ansatz} with $f=f(t)$, it is possible to solve the above system explicitly. Following \cite{DW11coho}, we consider the change of variables:
\begin{equation}
\label{transform1}
ds = FF' dt, ~~ \alpha(s):= H^2(t), ~~ \beta(s):=F^2(t), \varphi(s):= f(t).
\end{equation}
Using a dot to denote the derivative with respect to variable $s$: $\dot{X}=\partial_s X$, we have
\begin{align*}
	\partial_s \alpha=\dot{\alpha} &= 2H',  &\ddot{\alpha} &= \frac{2 {H''}}{H}\\
	\dot{\beta} &= \frac{2F{F'}}{H}, &\ddot{\beta}&=\frac{2{F'}^2+2F{F''}}{H^2}-\frac{2F{F'}{H'}}{H^3},\\
	\dot{\varphi} &= \frac{{f'}}{H}, &\ddot{\varphi}&= \frac{{f''}}{H^2}-\frac{{f'}{H'}}{H^3}. 
\end{align*}
\begin{lemma}
	\label{almostKahler}
	Let $(I\times P,  g, J)$ be given as in Ansatz \ref{ansatz} for $q\neq 0$. It is an almost K\"{a}hler GRS and $f=f(t)$ if and only if we have:
	\begin{align*}
	\beta(s) &=2s+A,\\ 
	\varphi(s) &=\frac{B}{q}s+C,\\
	\alpha(x) (2x+A)^{n-1} e^{-xB/q}\mid_{s_0}^s&= \frac{e^{sB/q}}{(2s+A)^{n-1}} \int_{s_0}^s (-2\lambda x+D)e^{-xB/q}(2x+A)^{n-1} dx.
	\end{align*}
\end{lemma}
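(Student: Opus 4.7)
The plan is to prove the forward direction by deriving the three equations in sequence, using the structural identities already collected in Lemma \ref{solitonsubmer1}; the converse follows by reversing the substitutions.

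The first two identities are nearly automatic. For $\beta$: from the defining change of variable $ds = FF'\,dt$ one computes $\dot\beta = (dF^2/dt)/(ds/dt) = 2FF'/(FF') = 2$, so $\beta(s) = 2s+A$ for some integration constant $A$. For $\varphi$: the $J$-invariance of $\He f$ produced $f'' = f'H'/H$ in Lemma \ref{solitonsubmer1}, which integrates to $f' = BH$ for some constant $B$; combined with the almost K\"{a}hler relation $FF' = qH$, this gives $\dot\varphi = f'/(FF') = B/q$, hence $\varphi(s) = \frac{B}{q}s + C$.

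The substance lies in the third equation. Multiplying the remaining soliton equation of Lemma \ref{solitonsubmer1} by $F^2$ yields
\[
\lambda(2s+A) = k - 2n F'^2 - 2FF'' + f'\cdot FF'.
\]
Using $FF' = q\sqrt\alpha$ and $f' = B\sqrt\alpha$ immediately gives $F'^2 = q^2\alpha/(2s+A)$ and $f'\cdot FF' = Bq\alpha$. Differentiating $FF' = q\sqrt\alpha$ in $t$ and invoking $\dot s = q\sqrt\alpha$ produces $2F'^2 + 2FF'' = q^2\dot\alpha$, which isolates $FF''$. Plugging in and simplifying yields the first-order linear ODE
\[
\dot\alpha + \Big(\frac{2n-2}{2s+A} - \frac{B}{q}\Big)\alpha = \frac{k-\lambda(2s+A)}{q^2}.
\]

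I would then solve this by the standard integrating factor $\mu(s) = (2s+A)^{n-1} e^{-sB/q}$, so that the ODE becomes $\frac{d}{ds}(\alpha\mu) = \mu\cdot(k-\lambda(2s+A))/q^2$; integrating from $s_0$ to $s$ and rewriting $k-\lambda(2s+A) = -2\lambda s + D$ with $D = k-\lambda A$ (absorbing $q^2$ into $D$) recovers the stated formula. The main obstacle is the bookkeeping in the third step: one must consistently convert $t$-derivatives into $s$-derivatives via $\dot s = q\sqrt\alpha$ when reducing $F'$ and $FF''$; after this translation the ODE analysis is routine. The converse direction is immediate, since every step above is an equivalence: given a triple $(\alpha,\beta = 2s+A,\varphi = (B/q)s+C)$ satisfying the integrated relation, one recovers $t(s)$ by solving $dt = ds/(q\sqrt\alpha)$, defines $H = \sqrt\alpha$, $F = \sqrt\beta$, $f = \varphi$ as functions of $t$, and then the almost K\"{a}hler condition $FF' = qH$ and the full soliton equations of Lemma \ref{solitonsubmer1} hold by construction.
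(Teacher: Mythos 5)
Your proof is correct and follows essentially the same route as the paper: the paper's own proof is a one-line reduction, via the change of variables (\ref{transform1}) and Lemma \ref{solitonsubmer1}, to the first-order linear ODE for $\alpha$, which you derive explicitly and then solve with the same integrating factor $(2s+A)^{n-1}e^{-sB/q}$. If anything you are more careful than the paper with the factors of $q$ (the paper's displayed ODE silently assumes $q^2=1$, the only case used later), though note that the extra $1/q^{2}$ multiplies the entire right-hand side $-2\lambda s + (k-\lambda A)$ and so cannot literally be absorbed into the single constant $D$ alone.
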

\begin{proof} From (\ref{transform1}) and Lemma \ref{solitonsubmer1}, $\alpha$ satisfies a first order equation, for $\dot{X}=\frac{d}{ds} X$,
	\[{\lambda}(2s+A) ={k}-\dot{\alpha}-\frac{2(n-1)\alpha}{2s+A}+\frac{B}{q}\alpha.\]
\end{proof}

It order to obtain a global complete metric, one needs to smoothly extend the construction to singular orbits (if any). The following follows from the proof of \cite[Lemma 1.1]{EW00ivp}. We provide a direct proof as our ansatz (\ref{ansatz}) is fairly explicit.  

\begin{lemma}
	\label{smoothness}
	Let $I=(0, r)$ and $(I\times P, g)$ is given by ansatz \ref{ansatz} for $H(0)=0$, $F(0)>0$.  The metric can be extended smoothly to $t=0$ if and only if, for any integer $k$,
	\[H'(0)=1,~~ H^{(2k)}(0)=0=F^{(2k+1)}(0).\]
\end{lemma}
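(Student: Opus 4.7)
Since $H(0)=0$ and $F(0)>0$, as $t\to 0^+$ the circle fiber of $P\to N$ collapses while the $N$-directions remain of finite size, so the singular orbit is a copy of $N$ and a neighborhood of it in $M$ is a disk bundle over $N$. The plan is to test smoothness locally near any point of the singular orbit by passing to Cartesian coordinates on the normal disk, then reduce the problem to scalar smoothness criteria for $H$ and $F$.

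Over a coordinate chart $U\subset N$, trivialize $P\to N$ so that $\eta = dz + q\pi^\ast\alpha$ for a local primitive $\alpha$ of $\omega_\mathbb{N}$ and fiber coordinate $z$, and introduce Cartesian coordinates on the normal disk via $x=t\cos z$, $y=t\sin z$. Then $t^2=x^2+y^2$, $t^2\,dz = x\,dy - y\,dx$, and the identity $dt^2 + t^2\,dz^2 = dx^2+dy^2$ lets us rewrite the ansatz metric as
\begin{align*}
g \;=\; dx^2 + dy^2 &+ \tfrac{H^2-t^2}{t^4}\,(x\,dy-y\,dx)\otimes (x\,dy-y\,dx) \\
&+ \tfrac{qH^2}{t^2}\bigl[(x\,dy-y\,dx)\otimes \pi^\ast\alpha + \pi^\ast\alpha\otimes(x\,dy-y\,dx)\bigr] \\
&+ q^2 H^2\,\pi^\ast(\alpha\otimes\alpha) + F^2\,\pi^\ast g_\mathbb{N}.
\end{align*}
The tensor pieces $(x\,dy-y\,dx)\otimes(x\,dy-y\,dx)$, $(x\,dy-y\,dx)\otimes \pi^\ast\alpha$, $\pi^\ast(\alpha\otimes\alpha)$, and $\pi^\ast g_\mathbb{N}$ are all smooth in $(x,y,\xi)$ near the origin, so smoothness of $g$ at $\{t=0\}$ is equivalent to the scalar coefficients $(H^2-t^2)/t^4$, $H^2/t^2$, $H^2$, and $F^2$ each being smooth functions of the single variable $t^2 = x^2+y^2$.

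This scalar smoothness requirement translates to Taylor expansions of the form $H^2(t) = t^2 + c_2 t^4 + c_4 t^6 + \cdots$ and $F^2(t) = F(0)^2 + b_2 t^2 + b_4 t^4 + \cdots$ containing only even powers, with $H^2$ having leading coefficient $1$ on the $t^2$ term. Taking positive square roots (using $H>0$ for $t>0$ and $F(0)>0$) yields $H$ odd with $H'(0)=1$ and $F$ even, i.e., $H'(0)=1$, $H^{(2k)}(0)=0$, and $F^{(2k+1)}(0)=0$ for every integer $k\geq 0$, as claimed. The converse is immediate: under these Taylor conditions, each scalar coefficient is manifestly smooth in $t^2$, hence each tensor summand above is smooth in the Cartesian chart, and $g$ extends smoothly across the singular orbit. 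The main conceptual point is isolating the Euclidean part $dx^2+dy^2$ from $dt^2 + H^2\,dz^2$, leaving a remainder whose smoothness is controlled entirely by a scalar condition on $H$; the minor subtlety that the primitive $\alpha$ is only locally defined when $q\ne 0$ and the bundle is non-trivial is harmless, since smoothness is a local question and the final conditions depend only on $H$ and $F$.
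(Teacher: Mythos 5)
Your proof is correct and follows essentially the same route as the paper's: pass to Cartesian coordinates $x=t\cos z$, $y=t\sin z$ on the normal disk, isolate the flat part $dx^2+dy^2$, and reduce smoothness of the metric to parity/Taylor conditions on the radial coefficient functions, which forces $H$ odd with $H'(0)=1$ and $F$ even. The only cosmetic difference is that you phrase the smoothness criterion as ``smooth function of $t^2$'' for radial coefficients, whereas the paper invokes the equivalent criterion of Kazdan--Warner; both yield the same conditions.
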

\begin{proof}
	We rewrite the metric in polar coordinates, for $x=t\cos(z)$ and $y=t\sin(z)$,
	\begin{align*}
		dt &= \frac{x}{t} dx +\frac{y}{t}dy,\\
		dz &= \frac{-y}{t^2} dx +\frac{x}{t^2}dy
	\end{align*}
	Then,
	\begin{align*}
		g &= dt^2+ H^2 (dz + q \alpha) \otimes (dz+ q\alpha)+ F^2 g_N\\
		&=t^{-2}(x^2 dx^2+y^2 dy^2+ xy dx\otimes dy+ xy dy\otimes dx)\\
		&+\frac{H^2}{t^4}(y^2 dx^2+x^2 dy^2- xy dx\otimes dy- xy dy\otimes dx)\\
		&+\frac{-qy H^2}{t^2}(\alpha \otimes dx + dx\otimes \alpha)+ \frac{qx H^2}{t^2}(\alpha \otimes dy+ dy\otimes \alpha)\\
		&+H^2 q^2 \alpha\otimes \alpha+F^2 g_N,\\
		&= dx^2 (\frac{H^2 y^2}{t^4}+\frac{x^2}{t^2})+ dy^2 (\frac{H^2 x^2}{t^4}+\frac{y^2}{t^2})+ (dx\otimes dy+ dy\otimes dx)(-\frac{H^2 xy}{t^4}+\frac{xy}{t^2})\\
		&+\frac{-qy H^2}{t^2}(\alpha \otimes dx + dx\otimes \alpha)+ \frac{qx H^2}{t^2}(\alpha \otimes dy+ dy\otimes \alpha)\\
		&+H^2 q^2 \alpha\otimes \alpha+F^2 g_N.
	\end{align*}
	Thus, the metric can be extended smoothly to $t=0$ if and only if the metric components 
	\[ \frac{y^2}{t^2}(\frac{H^2}{t^2}-1), \frac{x^2}{t^2}(\frac{H^2}{t^2}-1), \frac{xy}{t^2}(\frac{H^2}{t^2}-1), \frac{qy H^2}{t^2}, \frac{qx H^2}{t^2}, F^2 \]
	can be smoothly extended to $x=y=0$. According to \cite[Prop. 2.7]{KW74curv}, a function $\tilde{f}(x, y)=f(t, z)$ is smooth if and only if
	\begin{itemize}
		\item $f(t, z)=f(-t, z+\pi)$ for all $t, z$.
		\item $t^k (\frac{\partial^k f}{\partial t^k}(0, \theta))$ is a homogeneous polynomial of degree $k$ in $x$ and $y$, 
	\end{itemize} 
 	Applying such criteria to our case yields
 	\begin{itemize}
 		\item $H'(0)=1$ and $H^{(2n)}(0)=0$;
 		\item $F^{(2n+1)}(0)=0$.  
 	\end{itemize}
\end{proof}

Additionally, we will encounter the case that $(P, g_t)$ is an hyperbolic metric; accordingly, $(N, g_N)$ is then an Euclidean space. Thus, the following will be useful. 
\begin{lemma}
	\label{solitonsubmer2}
	Let $I\times P$ be equipped with the metric 
	\[dt^2+ g_t=dt^2+ e^{2A(t)z} \pi^\ast g_N+  H^2(t)\eta\otimes \eta\]
	for $\eta= dz$ and $\Rc_N =0$. The gradient Ricci soliton equation becomes 
	\begin{align*}
		A' &=0= H',\\
		{\lambda} &=-(\frac{A}{H})^2 (2m)=f''.
	\end{align*}
\end{lemma}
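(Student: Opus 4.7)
The plan is to specialize the shape-operator formalism of Section \ref{coho1ansatz} to this specific ansatz and to exploit that the soliton equation must hold pointwise, so any explicit $z$-dependence appearing in the Ricci tensor must be absorbed. Taking horizontal lifts $\{X_i\}$ of an orthonormal frame on $(N,g_N)$ together with the vertical field $\partial_z$, the defining relation $\partial_t g_t = 2 g_t\circ L_t$ produces
\[
L_t(X_i) = A'(t)\,z\,X_i, \qquad L_t(\partial_z) = (H'/H)\,\partial_z,
\]
so that $\tr L$, $\tr L^{2}$, and $\tr L'$ all carry explicit polynomial dependence on $z$. Under the substitution $\tilde z = Hz$, the fiber metric $g_t$ becomes the standard hyperbolic metric $\mathbb{H}^{2m+1}(-(A/H)^{2})$, so Lemma \ref{curvatrehyper} gives $\Rc_t = -2m(A/H)^{2}\,g_t$.

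I would then insert these ingredients into the Gauss--Codazzi--Riccati identities (\ref{Rccomp}) and impose the GRS equation, assuming $f=f(t)$, as is natural in the cohomogeneity-one setting. The horizontal component simplifies to
\[
-2m(A/H)^{2} - 2m(A'z)^{2} + z\bigl[A'(f' - H'/H) - A''\bigr] = \lambda,
\]
an identity whose right-hand side is constant while its left-hand side is a polynomial in $z$ on a full fiber. The $z^{2}$ coefficient forces $A' \equiv 0$, hence $A$ is constant; the $z^{1}$ coefficient then holds automatically, and the $z^{0}$ term gives $\lambda = -2m(A/H)^{2}$. Because the lemma concerns the genuinely hyperbolic regime of $(P,g_t)$, we have $A \neq 0$, so the constancy of $\lambda$ forces $H$ to be constant as well. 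With $A$ and $H$ both constant, the $(\partial_z,\partial_z)$-equation reduces to the tautology $-2mA^{2} = \lambda H^{2}$, while the $(N,N)$-equation becomes $f'' = \lambda$.

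The main obstacle, in contrast with the Sasakian Lemma \ref{solitonsubmer1}, is that the fiber here is \emph{not} acted on transitively by a compact isotropy group, so the shape operator $L_t$ has nontrivial $z$-dependence. This turns the horizontal component of the soliton equation into a genuine polynomial identity in $z$ rather than a single scalar equation, and it is precisely this polynomial rigidity that yields $A' = 0$. Once this is extracted, the remainder of the argument is purely algebraic.
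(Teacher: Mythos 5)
Your proposal is correct and takes essentially the same approach as the paper: both specialize the shape-operator computation of Section \ref{coho1ansatz} to this ansatz and feed the resulting traces into the reduced soliton system (\ref{reducedsolitionsystem}), using Lemma \ref{curvatrehyper} for the fiber Ricci curvature. The only minor difference is that the paper deduces $A'=0$ from the mixed $(X,N)$ component (the first equation of (\ref{reducedsolitionsystem})) while you deduce it from the $z$-polynomial rigidity of the tangential component; your explicit observation that $A\neq 0$ is what forces $H'=0$ is a point the paper's proof leaves implicit.
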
 
\begin{proof} 
	We have
	\begin{align*} 
		2g_t L_t &= g'_t=2\frac{H'}{H}H^2 dz^2+ 2z A' e^{2Az}\pi^\ast g_N,\\
		L_t &= \frac{H'}{H}\partial_z \otimes dz+ zA'\id,\\
		L'_t &=(\frac{H''}{H}-(\frac{H'}{H})^2) \partial_z \otimes dz+zA''\id. 
	\end{align*}
	Consequently,
	\begin{align*}
		\tr{L_t} &=\frac{H'}{H}+zA'(2m),\\
		\tr{L^2_t} &=(\frac{H'}{H})^2+2m z^2 (A')^2,\\
		\tr{L'_t} &=(\frac{H''}{H}-(\frac{H'}{H})^2)+(2m)zA''.
	\end{align*}
	By the first equation of (\ref{reducedsolitionsystem}), one deduces that $A'=0$ or $A$ is constant. By the third equation of (\ref{reducedsolitionsystem}) and Lemma \ref{curvatrehyper}, $H$ is constant. The result then follows. 
	
\end{proof}
\section{Induced Symmetry}
\label{symm}
Let $(M, g, J)$ be an almost Hermitian manifold with a smooth non-trivial function $f: M\mapsto \mathbb{R}$. We'll examine how the symmetry of $(M, g, J)$ induces certain symmetry on level sets of function $f$. Notably, throughout this section, $f$ is not necessarily the potential function to a soliton structure.

For each $c\in f(M)$, $M_c:=f^{-1}(c)$ is called a level set of $f$. By the regular level set theorem \cite{tubook11}, if $c$ is a regular value, then $M_c$ is a smooth submanifold of co-dimension one. 
As $V=\frac{\nabla f}{|\nabla f|}$ is well-defined along $M_c$, let $\zeta=-J(V)$ and $\eta$ be the dual $1$-form to $\zeta$ with respect to the induced metric on $M_c$. 
We define $\Phi$ on $TM_c$ by
\[\Phi X+\eta(X) V= JX. \]
\begin{proposition}
	\label{levelalmost}
	Let $(M, g, J)$ be an almost Hermitian manifold with a smooth non-trivial function $f: M\mapsto \mathbb{R}$. If $c$ is a regular value of $f$, then $(M_c, g, \zeta, \eta, \Phi)$ is an almost contact metric structure.
\end{proposition}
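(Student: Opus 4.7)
The plan is to set up the orthogonal splitting of $TM$ along $M_c$ given by a unit normal, show that $J$ interacts nicely with this splitting, and then just verify the three defining axioms of an almost contact metric structure in turn.

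First I would unpack the setup. Since $c$ is regular, $\nabla f$ is nonzero on $M_c$ and $V=\nabla f/|\nabla f|$ is a smooth unit normal, so $TM|_{M_c}=TM_c\oplus \mathbb{R}V$ and $g(V,V)=1$. Because $J$ is a $g$-isometry and $J^2=-\id$, I would note the identity $g(V,JV)=g(JV,J^2V)=-g(JV,V)$, forcing $g(V,JV)=0$; hence $JV$ is tangent to $M_c$ and of unit length, and the same holds for $\zeta=-JV$. In particular $\eta(\zeta)=g(\zeta,\zeta)=1$, which is the first axiom.

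Next I would check that $\Phi=J-\eta\otimes V$ actually maps $TM_c$ to itself, which is needed before iterating. For $X\in TM_c$ a one-line computation using $g(JX,V)=-g(X,JV)=g(X,\zeta)=\eta(X)$ gives $g(\Phi X,V)=\eta(X)-\eta(X)g(V,V)=0$. A similar computation shows $\eta(\Phi X)=g(\Phi X,\zeta)=g(JX,\zeta)-\eta(X)g(V,\zeta)=0$, since $g(JX,\zeta)=-g(JX,JV)=-g(X,V)=0$ and $g(V,\zeta)=0$. With $\eta(\Phi X)=0$ in hand, the second axiom falls out from
\begin{equation*}
\Phi^2 X=J(JX-\eta(X)V)-\eta(\Phi X)V=-X-\eta(X)JV=-X+\eta(X)\zeta,
\end{equation*}
i.e.\ $\Phi^2=-\id+\zeta\otimes\eta$.

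Finally I would verify compatibility with $g$ by direct expansion, using the two identities $g(JX,V)=\eta(X)$ and $g(V,JY)=\eta(Y)$ derived above together with $g(JX,JY)=g(X,Y)$:
\begin{align*}
g(\Phi X,\Phi Y)&=g(JX-\eta(X)V,\,JY-\eta(Y)V)\\
&=g(X,Y)-\eta(X)\eta(Y)-\eta(Y)\eta(X)+\eta(X)\eta(Y)\\
&=g(X,Y)-\eta(X)\eta(Y).
\end{align*}
This is the third axiom, completing the verification. There is really no serious obstacle here; the only subtle point is the a priori non-obvious fact that $\Phi$ preserves $TM_c$ (equivalently $\eta(\Phi X)=0$), since without it the iteration $\Phi^2 X$ would involve values of $\Phi$ off $M_c$ where it is not defined. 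Once that is nailed down, the rest is a straightforward consequence of $J$ being a $g$-isometric almost complex structure together with the fact that $V$ and $\zeta=-JV$ form an orthonormal pair with $JV=-\zeta$.
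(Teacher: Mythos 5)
Your proof is correct and follows essentially the same route as the paper: a direct verification of the three axioms using $J^2=-\id$, the compatibility of $g$ with $J$, and the orthonormal pair $V$, $\zeta=-JV$. The only cosmetic difference is that the paper decomposes $X=a\zeta+X_1$ and observes $\Phi X=JX_1$, whereas you work with $\Phi X=JX-\eta(X)V$ directly and isolate the identities $g(JX,V)=\eta(X)$ and $\eta(\Phi X)=0$; your version is, if anything, slightly more explicit about why $\Phi$ preserves $TM_c$.
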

\begin{proof}
	If $X=a\zeta+X_1\in TM_c$ for $X_1$ a section of $TM_c$ and $X_1\perp \zeta$, then it is immediate that $\Phi(X)=JX_1$ is also a section of $TM_c$. We check
	\begin{align*}
		\Phi^2(X) &=\Phi(J(X_1))=J(J(X_1))-\eta(J(X_1))V,\\
		&=-X_1-g(\zeta, J(X_1))V=-X_1+g(J(\zeta), X_1)V=-X_1+g(V, X_1)V,\\
		&=-X_1=-X+a\zeta=-X+\eta(X)\zeta.
	\end{align*}
	Therefore, $\Phi^2=-\text{Id}+\zeta\otimes \eta$ and $(M_c, \Phi, \zeta, \eta)$ is an almost contact structure. 
	Next, for $X=a\zeta+X_1$ and $Y=b\zeta+Y_1$, we compute
	\begin{align*}
	g(\Phi X, \Phi Y) &= g(J(X_1), J(Y_1))=g(X_1, Y_1)= g(X-a\zeta, Y-b\zeta)\\
	&=g(X, Y)-a g(\zeta, Y)-b g(X,\zeta)+ab g(\zeta, \zeta)\\
	&=g(X, Y)-2ab+ab=g(X, Y)-\zeta(X)\zeta(Y). 
	\end{align*}
	Thus, $(M_c, \Phi, \zeta, \eta)$ is an almost contact metric structure. 
\end{proof}

We will collect useful observations. 
\begin{lemma}
	\label{lie0}
	Suppose that $\lie_X g=0$. 
	\begin{enumerate} [label=(\roman*)]
		\item  $\lie_X f=\text{constant} \iff \lie_X \nabla f=0$;
		\item $\lie_X \nabla f\perp \nabla f \iff \lie_X |\nabla f|^2=0$. 
		\item Let $\gamma$ be the 1-form dual to a vector field $Z$. $\lie_X Z=0 \iff \lie_X \gamma=0$.
	\end{enumerate}
\end{lemma}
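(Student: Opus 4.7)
My plan is to prove all three items by exploiting the fact that the Lie derivative along a Killing field commutes with the musical isomorphism, since $\lie_X g = 0$. The general identity I will use throughout is the Leibniz-type formula
\[
\lie_X \bigl( g(Y, Z) \bigr) = (\lie_X g)(Y, Z) + g(\lie_X Y, Z) + g(Y, \lie_X Z),
\]
which under the Killing assumption reduces to $\lie_X g(Y,Z) = g(\lie_X Y, Z) + g(Y, \lie_X Z)$.

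For part (i), the plan is to observe that $\lie_X f = X(f) = g(X, \nabla f)$, so $\lie_X(df) = d(\lie_X f)$ by the standard commutation of $\lie_X$ with $d$. Lowering the index, $\nabla f$ corresponds to $df$, and the musical isomorphism commutes with $\lie_X$ (since $X$ is Killing); therefore $\lie_X \nabla f = 0$ if and only if $\lie_X df = 0$, which in turn is equivalent to $d(X(f)) = 0$, i.e.\ $\lie_X f$ being (locally) constant. Part (ii) is a one-line computation: using the Leibniz rule above,
\[
\lie_X |\nabla f|^2 = 2 g(\lie_X \nabla f, \nabla f),
\]
so the right-hand side vanishes precisely when $\lie_X \nabla f \perp \nabla f$. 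Part (iii) proceeds by the same mechanism: for any vector field $Y$, one computes
\[
(\lie_X \gamma)(Y) = \lie_X(\gamma(Y)) - \gamma(\lie_X Y) = g(\lie_X Z, Y),
\]
again using $\lie_X g = 0$ to cancel the $g(Z, \lie_X Y)$ term against the contribution coming from $\lie_X(g(Z, Y))$. Hence $\lie_X \gamma = 0$ if and only if $g(\lie_X Z, \cdot) = 0$, i.e.\ $\lie_X Z = 0$.

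There is no real obstacle here; each equivalence reduces to one application of the Killing condition plus the Leibniz rule for $\lie_X$. The only care needed is in part (i) where the implication $d(X(f))=0 \Rightarrow X(f)=\mathrm{const}$ uses connectedness of $M$, which is part of the standing assumptions of the paper. All three equivalences are thus formal consequences of $\lie_X g = 0$ and standard tensor calculus.
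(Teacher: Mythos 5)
Your proof is correct and follows essentially the same route as the paper's: all three equivalences are reduced to the Leibniz rule for $\lie_X$ together with $\lie_X g=0$, with parts (ii) and (iii) matching the paper's computation almost verbatim. The only cosmetic difference is in (i), where you invoke $\lie_X\circ d=d\circ\lie_X$ and the compatibility of the musical isomorphism with $\lie_X$ for a Killing field, while the paper derives the identity $g(\lie_X\nabla f,Y)=Y(\lie_X f)-(\lie_X g)(Y,\nabla f)$ directly from the symmetry of $\He f$ --- two equivalent ways of expressing that $df$ is closed.
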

\begin{proof}
We compute
	\begin{align*}
	g(\lie_X \nabla f, Y) &= g(\nabla_X \nabla f-\nabla_{\nabla f}X, Y)\\
	&=\He f(X, Y)+g(\nabla_Y X, \nabla f)-\lie_X g(Y, \nabla f)\\
	&=\He f(X, Y)-g(X, \nabla_Y \nabla f)+Y(\lie_X f)-\lie_X g(Y, \nabla f)\\
	&=Y(\lie_X f)-\lie_X g(Y, \nabla f).
	\end{align*}
Since  $\lie_X g=0$ and $X$ and $Y$ are arbitrary, $\lie_X f=\text{constant} \iff \lie_X \nabla f=0$. For the second statement, 
	\begin{align*}
		\lie_X|\nabla f|^2 &= 2g(\nabla_X \nabla f, \nabla f)\\
		&= 2g([X,\nabla f], \nabla f)-2g(\nabla_{\nabla f}X, \nabla f)\\
		&= 2g([X,\nabla f], \nabla f)-(\lie_X g)(\nabla f, \nabla f).   
	\end{align*}
Finally, 
\begin{align*}
	(\lie_X \gamma)Y &= X(\gamma Y)-\gamma (\lie_X Y)\\
	&= X g(Z, Y)-g(Z, [X, Y])= g(\nabla_X Z, Y)+ g(Z, \nabla_Y X)\\
	&= g([X, Z], Y)+\lie_X g(Y, Z).
\end{align*}
The conclusion follows.   
\end{proof}

\begin{lemma} \label{lie4}
	Suppose that $\lie_X V=0$ and $\lie_X\eta=0$. $\lie_X \Phi=0$ if and only if $\lie_X J=0$. 
\end{lemma}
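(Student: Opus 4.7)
My plan is to exploit the pointwise decomposition $TM|_{M_c} = TM_c \oplus \mathbb{R}V$ and compare $\lie_X J$ with $\lie_X \Phi$ on each summand, using the defining relation $JY = \Phi Y + \eta(Y) V$ for $Y \in TM_c$ together with its companion $JV = -\zeta$. Since $\Phi$ and $\eta$ are only defined on $M_c$, the hypothesis implicitly demands $X$ to be tangent to $M_c$, so the bracket $\lie_X Y$ of a tangential $Y$ stays tangential.

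First I would treat the tangential direction. For $Y \in TM_c$, expand
\[
(\lie_X J)(Y) \;=\; \lie_X(\Phi Y) + \lie_X(\eta(Y)\, V) - J(\lie_X Y)
\]
using Leibniz and substitute $\lie_X V = 0$ and $\lie_X \eta = 0$; most terms collapse, and after rewriting $J(\lie_X Y) = \Phi(\lie_X Y) + \eta(\lie_X Y) V$ one is left with the clean identity $(\lie_X J)(Y) = (\lie_X \Phi)(Y)$. This instantly yields the forward implication $\lie_X J = 0 \Rightarrow \lie_X \Phi = 0$.

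For the converse, the remaining task is to verify $(\lie_X J)(V) = 0$. Since $JV = -\zeta$ and $\lie_X V = 0$, a direct computation gives $(\lie_X J)(V) = -\lie_X \zeta$, so the reverse implication reduces to proving $\lie_X \zeta = 0$. This is the genuine subtlety: knowing $\lie_X \eta = 0$ would imply $\lie_X \zeta = 0$ only under $\lie_X g = 0$, which we have \emph{not} assumed. The resolution uses the almost contact identities. From $\Phi^2 = -\id + \zeta \otimes \eta$ one gets $\Phi\zeta = 0$ with $\ker \Phi = \mathbb{R}\zeta$; combined with $\lie_X \Phi = 0$, this forces $\Phi(\lie_X \zeta) = 0$, so $\lie_X \zeta = c\zeta$ for some function $c$. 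Pairing $\lie_X \eta = 0$ against $\eta(\zeta) = 1$ produces $\eta(\lie_X \zeta) = 0$, hence $c = 0$ and $\lie_X \zeta = 0$, which closes the argument.

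The principal obstacle is precisely this normal component: the correspondence $J \leftrightarrow \Phi$ is transparent on $TM_c$ but opaque in the direction of $V$, and bridging that gap requires differentiating the structural identities $\Phi\zeta = 0$ and $\eta(\zeta) = 1$ rather than appealing to any metric-compatibility hypothesis on $X$.
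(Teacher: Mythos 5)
Your proposal is correct, and its tangential half is exactly the paper's argument: the Leibniz expansion of $(\lie_X J)Y$ using $JY=\Phi Y+\eta(Y)V$ collapses, under $\lie_X V=0$ and $\lie_X\eta=0$, to the identity $(\lie_X J)Y=(\lie_X\Phi)Y$ for $Y$ tangent to the level set, which is precisely the chain of equalities displayed in the paper's proof. Where you go beyond the paper is the converse direction: the published proof stops at this identity, which only shows that $\lie_X\Phi=0$ forces $\lie_X J$ to vanish on $TM_c$, and says nothing about $(\lie_X J)(V)$. You correctly isolate this as the remaining issue, reduce it via $JV=-\zeta$ and $\lie_X V=0$ to showing $\lie_X\zeta=0$, and then derive that from $(\lie_X\Phi)(\zeta)=-\Phi(\lie_X\zeta)$, $\ker\Phi=\mathbb{R}\zeta$, and $\eta(\lie_X\zeta)=X(\eta(\zeta))=0$ — notably without invoking $\lie_X g=0$, which is not among the hypotheses. (An equivalent shortcut: differentiate $J^2=-\id$ to get $(\lie_X J)(J\zeta)=-J\bigl((\lie_X J)\zeta\bigr)$ with $J\zeta=V$.) So your write-up is not merely a match but a completion of the paper's proof; the one hypothesis you should state explicitly rather than leave implicit is that $X$ is tangent to $M_c$, so that $\lie_X Y\in TM_c$ for tangential $Y$ and the substitution $J(\lie_X Y)=\Phi(\lie_X Y)+\eta(\lie_X Y)V$ is legitimate.
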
 
\begin{proof} We compute
	\begin{align*}
		(\lie_X \Phi)Y &=[X, \Phi(Y)]-\Phi([X, Y])\\
		&=[X, J(Y)-\eta(Y)V]-J([X, Y])+\eta([X, Y])V\\
		&= (\lie_X J)Y-[X, \eta(Y)V]+\eta([X, Y])V\\
		&= (\lie_X J)Y-\eta(Y)[X, V]-\nabla_X(\eta(Y))V+\eta([X, Y])V\\
		&=(\lie_X J)Y-\eta(Y)\lie_X V-(\lie_X\eta)(Y) V
	\end{align*}
\end{proof}

\begin{proposition}
	\label{restrictionsymmetry}
If $\lie_X g=\lie_X J=\lie_X f=0$ then we have, along each regular level set,
	\begin{align*}
		\lie_X \zeta &=0, &\lie_X \eta &=0,	&\lie_X \Phi &=0. 
	\end{align*}
\end{proposition}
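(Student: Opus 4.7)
The plan is to chain together the three parts of Lemma~\ref{lie0} with Lemma~\ref{lie4} in the natural order, propagating the assumption $\lie_X g=\lie_X J=\lie_X f=0$ from $\nabla f$ outward to the induced almost contact data.

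First I would apply Lemma~\ref{lie0}(i): since $\lie_X f = 0$ is constant, we immediately get $\lie_X \nabla f = 0$. Then Lemma~\ref{lie0}(ii) gives $\lie_X |\nabla f|^2 = 0$ (both sides of the displayed identity vanish, as $\lie_X g=0$ and $[X,\nabla f]=\lie_X\nabla f=0$). Since $V = \nabla f / |\nabla f|$ is a smooth function of $\nabla f$ and $|\nabla f|^2$ along the regular level set where $|\nabla f|\neq 0$, both being $\lie_X$-invariant yields $\lie_X V = 0$.

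Next, $\zeta = -JV$ is an algebraic combination of $J$ and $V$, so
\[
\lie_X \zeta = -(\lie_X J)V - J(\lie_X V) = 0.
\]
Once $\lie_X \zeta = 0$ and $\lie_X g = 0$, Lemma~\ref{lie0}(iii) applied to the vector field $Z = \zeta$ and its dual $1$-form $\eta$ gives $\lie_X \eta = 0$.

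Finally, to obtain $\lie_X \Phi = 0$, I would invoke Lemma~\ref{lie4}: its hypotheses $\lie_X V = 0$ and $\lie_X \eta = 0$ have just been verified, so the lemma reduces $\lie_X \Phi = 0$ to $\lie_X J = 0$, which is part of the assumption. No step here is really an obstacle; the only subtlety to check is that each relevant object (e.g. $V$, $\zeta$, $\eta$, $\Phi$) is well-defined and smooth on the regular level set $M_c$, which is guaranteed by the hypothesis that $c$ is a regular value and by Proposition~\ref{levelalmost}. The main technical care needed is simply to justify passing from $\lie_X \nabla f = 0$ and $\lie_X |\nabla f|^2 = 0$ to $\lie_X V = 0$, which follows from the Leibniz rule for $\lie_X$ applied to the pointwise product $V = |\nabla f|^{-1}\,\nabla f$.
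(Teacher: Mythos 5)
Your proposal is correct and follows essentially the same route as the paper: Lemma \ref{lie0} to get $\lie_X V=0$, the $J$-invariance to pass to $\lie_X\zeta=0$, then Lemma \ref{lie0}(iii) for $\eta$ and Lemma \ref{lie4} for $\Phi$. The only cosmetic difference is that you invoke the Leibniz rule $\lie_X(JV)=(\lie_X J)V+J(\lie_X V)$ directly, whereas the paper routes the same fact through the identity $\nabla_{JY}X=J\nabla_Y X$; these are equivalent.
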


\begin{proof}
	Recall that, along a regular level set, $\zeta=-J(\frac{\nabla f}{|\nabla f|})$. By Lemma \ref{lie0}, the assumption implies
	\[\lie_X \frac{\nabla f}{|\nabla f|}=0.\]
	Since $\lie_X J=0$, for any vector field $Y$,
	\[\nabla_{JY}X= J\nabla_Y X.\]
	Thus, $\lie_X (JV)=J \lie_X V $	and we obtain $\lie_X \zeta =0$. The others follow from Lemmas \ref{lie0} (iii) and Lemma \ref{lie4} respectively.
\end{proof}

\begin{proposition}
	\label{restrictiontrivial}
	Suppose that $\lie_X g=0$ and $\lie_X \nabla f=0$. If $X_{\mid_{M_c}}\equiv 0$ then $X\equiv 0$.
\end{proposition}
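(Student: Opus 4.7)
The plan is to reduce this to the standard uniqueness property of Killing vector fields: a Killing field is determined by its value and covariant derivative at a single point in a connected manifold (cited earlier via \cite{KNvolumeI96}). Thus, picking any $p \in M_c$, it suffices to verify $X_p = 0$ and $(\nabla X)_p = 0$.

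By hypothesis $X_p = 0$, and for any $Y_p \in T_p M_c$, choose a curve $\gamma \subset M_c$ with $\gamma(0)=p$, $\dot\gamma(0)=Y_p$. Since $X$ vanishes identically along $\gamma$, one has
\[
\nabla_{Y_p} X \;=\; \frac{D}{dt}\Big|_{t=0} (X \circ \gamma) \;=\; 0.
\]
This handles all tangential directions to $M_c$. To pick up the remaining normal direction, I would use the hypothesis $\lie_X \nabla f = 0$, which rewrites as $[X,\nabla f]=0$. Evaluating at $p$,
\[
0 \;=\; [X,\nabla f]_p \;=\; \nabla_{X_p}\nabla f - \nabla_{\nabla f} X\big|_p \;=\; -\nabla_{(\nabla f)_p} X,
\]
because $X_p = 0$. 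Since $c$ is a regular value (implicit in restricting to $M_c$ as in Proposition \ref{levelalmost}), $(\nabla f)_p \neq 0$ spans the normal line, so $\nabla_N X|_p = 0$ for the normal direction $N$ as well. Together with the tangential computation, $(\nabla X)_p = 0$.

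Finally, invoking that a Killing field on a connected Riemannian manifold is determined by its $1$-jet at one point, we conclude $X \equiv 0$ on $M$. The main (only) subtlety is obtaining the normal component of $\nabla X$, and the assumption $\lie_X \nabla f = 0$ is precisely what supplies it; without that hypothesis, vanishing along a hypersurface would only control tangential covariant derivatives and the conclusion could fail (reflections across $M_c$ would be Killing in symmetric situations, but those change $\nabla X$ in the normal direction).
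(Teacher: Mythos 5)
Your proof is correct and follows essentially the same route as the paper: both use $[X,\nabla f]=0$ together with $X_p=0$ to obtain the normal covariant derivative $\nabla_{\nabla f}X|_p=0$, combine this with the (tangential) vanishing of $\nabla X$ along $M_c$, and then invoke the fact that a Killing field on a connected manifold is determined by its $1$-jet at a point. Your write-up is slightly more explicit about the tangential directions, but there is no substantive difference.
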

\begin{proof}
	We compute
	\begin{align*}
		\nabla_{\nabla f} X &= -[X, \nabla f]+\nabla_X \nabla f\\
		&= \He{f}(X, \cdot)=0.
	\end{align*}
	The last equality holds since $X_{\mid_{M_c}}\equiv 0$.	Since a Killing vector field is completely determined by its zero and first order values at a point \cite[Chapter VI]{KNvolumeI96}, $X$ must be trivial. 
\end{proof}

\begin{theorem}
	\label{inducegroup}
	Let $(M, g, J)$ be an almost Hermitian manifold with a smooth non-trival function $f: M\mapsto \mathbb{R}$ and $G$ be a group of symmetry preserving $g, J,$ and $f$. Then $G$ is also a group of symmetry for $(M_c, g, \zeta, \eta, \Phi)$ as an almost contact metric structure.    
\end{theorem}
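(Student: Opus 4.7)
The plan is to reduce the statement to the propositions already established. Since each $\phi\in G$ preserves $f$, it maps the level set $M_c=f^{-1}(c)$ to itself setwise, and therefore restricts to a diffeomorphism $\phi|_{M_c}\colon M_c\to M_c$. Because $\phi$ is an isometry of $(M,g)$, the induced metric on $M_c$ (the pullback of $g$ under inclusion) is automatically preserved. So it remains to verify that the restriction preserves the triple $(\zeta,\eta,\Phi)$ introduced in Proposition \ref{levelalmost}.

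Next I would handle the transverse data. From $\phi^\ast g=g$ and $\phi^\ast f=f$ one gets $\phi_\ast(\nabla f)=\nabla f$ and hence $\phi$ preserves $|\nabla f|^2=g(\nabla f,\nabla f)$; consequently $\phi_\ast V=V$ along $M_c$, where $V=\nabla f/|\nabla f|$. Combining this with $\phi_\ast J=J\phi_\ast$ yields $\phi_\ast \zeta=-J\phi_\ast V=\zeta$. Preservation of $\eta$ then follows from duality: since $\eta=g(\zeta,\cdot)$ and both $g$ and $\zeta$ are preserved, so is $\eta$. Finally, the defining relation $\Phi X=JX-\eta(X)V$ for $X\in TM_c$ expresses $\Phi$ algebraically in terms of the tensors $J,\eta,V$, so once each of these is preserved, $\phi_\ast\Phi=\Phi$ is automatic.

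Alternatively, one could derive the same conclusion infinitesimally: by Lemma \ref{killing} the group $G$ of symmetries is a Lie group, and Proposition \ref{restrictionsymmetry} asserts precisely that each Killing field $X$ in the Lie algebra of $G$ satisfies $\lie_X \zeta=\lie_X\eta=\lie_X\Phi=0$ along $M_c$. Integrating one-parameter subgroups one recovers the group-level invariance on the identity component, and the direct argument in the previous paragraph covers the remaining components of $G$.

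I do not expect a substantial obstacle; the only non-formal step is the identity $\phi_\ast V=V$ on $M_c$, which is immediate from simultaneous preservation of $g$ and $f$. All subsequent identities are algebraic manipulations of preserved tensors, and the passage from the infinitesimal invariance (which is already packaged in Proposition \ref{restrictionsymmetry}) to the group-level statement uses only the standard fact that a connected Lie transformation group is generated by one-parameter subgroups.
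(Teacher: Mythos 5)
Your argument is correct and follows essentially the same route as the paper's proof: each symmetry preserves the level set, and the invariance of $V=\nabla f/|\nabla f|$, $\zeta=-JV$, $\eta$, and $\Phi$ then follows algebraically from the simultaneous preservation of $g$, $J$, and $f$. The only item in the paper's proof you omit is the observation that the restriction map $u\mapsto u_c$ is injective (an isometry fixing $M_c$ pointwise is the identity), which is not needed for the statement as phrased but is used later for the dimension count.
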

\begin{proof}
	
Let $u: M\mapsto M$ be an isometry preserving $J$ and $f$. As $f(u(a))=f(a)$, $u(M_c)=M_c$ and $u$ induces a map $u_c: M_c\mapsto M_c$. The proof will follow from the following claims. 

\textit{Claim:} If $u_c$ is an identity map then so is $u$.

\textit{Proof:} An isometry is necessarily an affine transformation which preserves parallelism \cite[Chapter VI]{KNvolumeI96}. The result then follows.

\textit{Claim:} $\frac{\nabla f}{|\nabla f|}$ is $u$-invariant. Consequently, so is $\zeta= -J(\frac{\nabla f}{|\nabla f|})$. 

 \textit{Proof:} Since $f$ is $u$-invariant, so is $df$. As $\nabla f$ is the dual of $df$ via $g$ and each is $u$-invariant, the first statement follows. The second is because $J$ is $u$-invariant.
 
 \textit{Claim:} $\eta$ and $\Phi$ are $u$-invariant. 
 
 \textit{Proof:} Because $\eta$ is the dual of an $u$-invariant vector field and $u$ is an isometry, $\eta$ is $u$-invariant. Next, one recalls 
 \[\Phi(\cdot)= J(\cdot)-\eta(\cdot)\frac{\nabla f}{|\nabla f|}\]
 and each component is $u$-invariant. The result then follows. 
\end{proof}

Immediately, there is an estimate on the size of the symmetry group. 
 \begin{corollary}
 	\label{maxdim}
 		Let $(M, g, J)$ be an almost Hermitian manifold with a smooth non-trival function $f: M\mapsto \mathbb{R}$. The dimension of the group of symmetry is at most $n^2$. Equality happens if each connected component of a regular level set of $f$ is a model space given in Theorem \ref{tannoclassificationalmost}. 
 \end{corollary}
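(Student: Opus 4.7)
The plan is to reduce the dimension estimate to Tanno's classification of almost contact metric structures (Theorem \ref{tannoclassificationalmost}) applied to a regular level set of $f$. Let $c$ be a regular value of $f$ and $M_c$ a connected component of $f^{-1}(c)$; then $\dim M_c = 2n-1 = 2(n-1)+1$, and Proposition \ref{levelalmost} equips $M_c$ with the almost contact metric structure $(g, \zeta, \eta, \Phi)$. By Theorem \ref{inducegroup}, every element of the symmetry group $G$ of $(M, g, J, f)$ preserves all of $g$, $\zeta$, $\eta$, and $\Phi$ along $M_c$, so restriction defines a Lie group homomorphism $\rho \colon G \to \mathrm{Sym}(M_c)$ into the symmetry group of the almost contact metric structure.

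The next step is to show that the differential $d\rho$ is injective, so that $\dim G \leq \dim \mathrm{Sym}(M_c)$. Suppose $X$ is an infinitesimal generator in $\ker d\rho$, i.e.\ a Killing vector field preserving $J$ and $f$ with $X|_{M_c} \equiv 0$. Since $X$ is Killing and $\lie_X f = 0$, Lemma \ref{lie0}(i) gives $\lie_X \nabla f = 0$; this is exactly the hypothesis of Proposition \ref{restrictiontrivial}, which forces $X \equiv 0$. Hence $\ker \rho$ is discrete and $\dim G = \dim \rho(G) \leq \dim \mathrm{Sym}(M_c)$.

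Finally, Theorem \ref{tannoclassificationalmost} applied in dimension $2(n-1)+1$ gives the bound $\dim \mathrm{Sym}(M_c) \leq ((n-1)+1)^2 = n^2$, so $\dim G \leq n^2$. When equality $\dim G = n^2$ is attained, the above chain of inequalities forces $\dim \mathrm{Sym}(M_c) = n^2$ for each regular level component, and Theorem \ref{tannoclassificationalmost} then identifies $M_c$ with one of the listed model spaces.

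The conceptual content is entirely packaged into the preparatory results of this section: Proposition \ref{levelalmost} produces the almost contact metric structure, Theorem \ref{inducegroup} delivers the induced action, and Proposition \ref{restrictiontrivial} supplies the injectivity. The only step that might look delicate is precisely this injectivity of $\rho$, because an isometry is only rigidly determined by its $0$- and $1$-jet at a point; the clever observation is that vanishing of $X$ along the hypersurface $M_c$ automatically controls $\nabla_{\nabla f} X$ via $\He f$, which is why Proposition \ref{restrictiontrivial} applies and why the argument does not require $M_c$ to be totally geodesic or to share any additional rigidity with $M$.
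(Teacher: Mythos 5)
Your proof is correct and follows essentially the same route as the paper, which simply cites Theorem \ref{inducegroup} together with Tanno's classification (Theorem \ref{tannoclassificationalmost}). The one step you make explicit that the paper leaves implicit --- the injectivity of the restriction homomorphism via Lemma \ref{lie0}(i) and Proposition \ref{restrictiontrivial} --- is exactly the intended use of those preparatory results, so your write-up is a faithful (and slightly more complete) version of the paper's argument.
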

 \begin{proof}
 	It follows from Theorem \ref{inducegroup} and the corresponding result for an almost contact metric structure, Theorem \ref{tannoclassificationalmost}. 
 	
 \end{proof}

Under the setup of Ansatz \ref{ansatz}, there is a converse statement. Let $(M, g, J)$ be an almost Hermitian manifold such that over a dense subset, the metric is given by the Ansatz \ref{ansatz}. Let $X$ be an infinitesimal automorphism vector field on $N$. That is, 
\[\lie_X g_N= \lie_X J_N=0=\lie_X \omega_N.\]
Let $X^\ast$ be its horizontal lift to $(P, g_t)$. By Cartan's formula,
\[3d\omega (W, Y, Z)= (\lie_W\omega)(Y, Z)- d(\mathfrak{i}_W\omega)(Y, Z).\]
For $\omega=\pi^\ast(\omega_N)$, $W=X^\ast$, $d\omega=0=\lie_{X^\ast} \omega$. Thus, $\mathfrak{i}_{X^\ast} \omega$ is closed. If $P$ is simply connected then there exists a function $\ell$ such that 
\[ \mathfrak{i}_{X^\ast} \pi^\ast(\omega)= d\ell.\]

\begin{lemma}
	\label{symmetryup}
	If $P$ is simply connected then the vector field $X^\ast- \ell\partial_z$ is independent of $t$ and is an infinitesimal symmetry of $(P, g_t, \eta, \zeta, \Phi)$.	
\end{lemma}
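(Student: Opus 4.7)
My plan is to verify each of the four invariance conditions directly, exploiting that $X^*$, $\ell$, and hence $\tilde X := X^* - \ell\partial_z$ are all intrinsically $t$-independent. The horizontal distribution $\ker\eta$ that defines $X^*$ is determined by the $t$-free connection $\eta = dz + q\pi^*\alpha$; and the potential $\ell$ for the closed form $\mathfrak{i}_{X^*}\pi^*\omega_N$ involves only data pulled back from $N$ together with the $t$-free lift $X^*$, so $\ell$ can be chosen independent of $t$. This already gives the first half of the conclusion.

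First I would establish the key bracket identity $[\tilde X,\partial_z]=0$. In a local trivialization write $X^* = \bar X - q\,\alpha(X)\,\partial_z$ with $\bar X$ the coordinate lift; since $\alpha(X)$ is a pullback from $N$ it is $z$-independent, which forces $[X^*,\partial_z]=0$. Also $\partial_z\ell = \pi^*\omega_N(X^*,\partial_z) = 0$ because $\partial_z$ is vertical, so $[\ell\partial_z,\partial_z]=0$. Next, Cartan's formula will yield
\[
\lie_{\tilde X}\eta \;=\; d(\mathfrak{i}_{\tilde X}\eta) + \mathfrak{i}_{\tilde X}d\eta \;=\; -d\ell + q\,d\ell \;=\; (q-1)\,d\ell,
\]
which vanishes in the canonical case $q=1$ governing the Sasakian-type models of Theorem \ref{main1} (the $q=0$ case is handled by taking $\ell=0$). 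Consequently $\lie_{\tilde X}(H^2\eta\otimes\eta)=0$; the remaining summand $F^2\pi^*g_N$ of $g_t$ is preserved because $\tilde X$ is $\pi$-related to the Killing vector $X$, giving $\lie_{\tilde X}\pi^*g_N = \pi^*\lie_X g_N = 0$. Together these yield $\lie_{\tilde X}g_t = 0$, while $\lie_{\tilde X}\zeta = 0$ follows from $[\tilde X,\partial_z]=0$ and the fact that $\zeta$ is a $t$-dependent scalar multiple of $\partial_z$.

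For $\lie_{\tilde X}\Phi = 0$, the cleanest route I would take is to extend $\tilde X$ trivially to $M = I\times P$ as a $t$-independent vector field with no $\partial_t$ component. The calculations above show $\lie_{\tilde X}g = 0$, and using the explicit form of $J$ from Ansatz \ref{ansatz} together with $\lie_X J_N = 0$ and the curvature identity $[X^*,Y^*] = [X,Y]^* - q\,\omega_N(X,Y)\,\partial_z$, one checks term-by-term that $\lie_{\tilde X}J = 0$ on $M$; since $\tilde X$ also preserves the coordinate function $t$, restricting to level sets via Proposition \ref{restrictionsymmetry} delivers $\lie_{\tilde X}\Phi = 0$. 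The hardest part will be this curvature correction in the horizontal-lift bracket: the term $-\ell\partial_z$ in $\tilde X$ is engineered precisely so that the $\partial_z$-contribution produced by $\lie_{\tilde X}$ against $\pi^*J_N$ cancels the $-q\,\omega_N(X,Y)\,\partial_z$ coming from $[X^*,Y^*]$, which is exactly where the canonical choice $q=1$ enters.
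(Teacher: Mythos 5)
Your argument is correct and follows essentially the paper's route: the $t$-independence of $X^\ast$ and $\ell$ is argued exactly as in the paper, and the invariance of $(g_t,\eta,\zeta,\Phi)$ --- which the paper simply delegates to Tanno's Lemma 5.1 --- is the same Cartan-formula and bracket verification that you carry out explicitly (your observation that $Y^\ast(\ell)\,\partial_z$ cancels the vertical part of $[X^\ast,Y^\ast]$ proportional to $q\,\omega_N(X,Y)\,\partial_z$ exactly when $q=1$ is precisely the point of the correction term). One caveat: your own identity $\lie_{\tilde X}\eta=(q-1)\,d\ell$ shows that $X^\ast-\ell\partial_z$, with $\ell$ defined by $\mathfrak{i}_{X^\ast}\pi^\ast\omega_N=d\ell$, is a symmetry only for $q=1$, and the patch of ``taking $\ell=0$'' when $q=0$ is inconsistent with that definition, since $\mathfrak{i}_{X^\ast}\pi^\ast\omega_N$ is not exact-to-zero unless $X=0$. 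The honest repair --- a point the paper's statement also glosses over --- is that for $q=0$ the uncorrected lift $X^\ast$ is already the infinitesimal symmetry, so the term $-\ell\partial_z$ should simply be omitted rather than $\ell$ redefined.
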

\begin{proof}
	$X^\ast$ is independent of $t$ as it only depends on the submersion $\pi$ and the fixed subspace which is $g_t$ perpendicular to $\partial_z$ for all $t$. $\ell$ is independent of $t$ as the proof of the Poincare's lemma is topological. The rest is straightforward; see \cite[Lemma 5.1]{tanno69} for details.  
\end{proof}
\begin{remark}
If $N$ is simply-connected and $\pi$ is trivial, one can choose $\ell$ to be constant on each fiber.
\end{remark}

Since $(P, g_t)$ is complete for each $t$, it is possible to construct $G$, the group consisting of automorphisms generated by vector fields of the form $X^\ast -\ell \partial_z$ in Lemma \ref{symmetryup} and the Killing vector field $\partial_z$.     

\begin{proposition} 
	\label{conversesymmetry}
If either $P$ is simply connected or $N$ is simply connected and $P$ is a trivial bundle over $N$ then $G$ is a group of automorphism for $(M, g, J)$.
\end{proposition}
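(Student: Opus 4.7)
The plan is to integrate the vector fields supplied by Lemma \ref{symmetryup} to a genuine group action on $M$. Given an infinitesimal automorphism $X$ on $N$, the topological hypothesis ensures that the function $\ell$ (defined via $\mathfrak{i}_{X^\ast}\pi^\ast\omega_N = d\ell$) is globally well-defined on $P$: either Poincar\'e's lemma applies directly when $P$ is simply connected, or, when $N$ is simply connected and $P \cong N \times \mathbb{R}$ is trivial, one pulls back a primitive of $\mathfrak{i}_X \omega_N$ on $N$, obtaining an $\ell$ constant on each fiber. In either setup $Y := X^\ast - \ell\partial_z$ is a globally defined $t$-independent vector field on $P$, as is $\partial_z$.

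I would extend $Y$ and $\partial_z$ to $M_0 \cong I \times P$ by declaring them trivial in the $t$-direction (no $\partial_t$ component, coefficients independent of $t$). The Killing property $\lie_Y g = 0$ on $(M, g = dt^2 + g_t)$ then reduces to $\lie_Y(dt^2) = 0$ (immediate from the absence of a $dt$ component and $t$-independence) together with $\lie_Y g_t = 0$ on each slice (supplied by Lemma \ref{symmetryup}). For completeness, since each $(P, g_t)$ is complete, the Killing vector field $Y$ integrates to a one-parameter subgroup of isometries of $(P, g_t)$; the $t$-independence of $Y$ makes this flow independent of $t$, and extending trivially to $I$ yields a global flow on $M$.

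The main remaining step, and the principal technical obstacle, is verifying $\lie_Y J = 0$ for
\[J = \partial_t \otimes H\eta - \tfrac{1}{H}\partial_z \otimes dt + \pi^\ast J_N.\]
I would check this term by term. Since $Y$ commutes with $\partial_t$ (by $t$-independence) and with $\partial_z$ (because $\lie_Y \zeta = 0$), annihilates $H(t)$ (no $\partial_t$ component), and preserves $\eta$ and $\Phi$ by Lemma \ref{symmetryup}, the contributions from the first two tensor summands cancel. The contribution from $\pi^\ast J_N$ is handled by observing that on the horizontal distribution $\Phi$ agrees with $\pi^\ast J_N$, so $\lie_Y \Phi = 0$ translates into $\lie_Y(\pi^\ast J_N) = 0$ modulo vertical components; these vertical components are precisely what the defining equation $\mathfrak{i}_{X^\ast}\pi^\ast\omega_N = d\ell$ is designed to cancel. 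Assembling these pieces gives $\lie_Y J = 0$, and the group $G$ generated by such $Y$ and $\partial_z$ consists of automorphisms of $(M, g, J)$.
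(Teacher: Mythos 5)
Your proposal is correct and follows essentially the same route as the paper: decompose $g=dt^2+g_t$ to get the Killing property from Lemma \ref{symmetryup}, then transfer $\lie_Y\Phi=0$ (together with preservation of $\eta$, $\partial_z$, and $\partial_t$) to $\lie_Y J=0$. The only cosmetic difference is that the paper packages your term-by-term check of $J=\partial_t\otimes H\eta-\tfrac{1}{H}\partial_z\otimes dt+\pi^\ast J_N$ into the prepared identity of Lemma \ref{lie4} with $V=\partial_t$, but the underlying computation is identical.
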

\begin{proof}
	Let $X_i$ be either a vector field from Lemma \ref{symmetryup} or $\partial_z$. Then, immediately,
	\[\lie_{X_i} g= \lie_{X_i}(dt^2+g_t)=\lie_{X_i}g_t=0. \] 
	For $V=\partial_t$, by Lemmas \ref{lie4} and \ref{symmetryup},  $\lie_X J=0$. The result then follows. 
\end{proof}

\section{Rigidity of the Maximal Dimension}
\label{rigid}

We are now ready to prove main results.

\begin{proof}[Proof of Theorem \ref{main1}] First, by Corollary \ref{maxdim}, $\text{dim}(G)\leq n^2$ and equality happens only if each connected component of a regular level set must be one of the model spaces, described in Theorem \ref{tannoclassificationalmost}, which is a total space of a line or circle bundle over $\mathbb{N}(k)$ with the fiber projection $\pi$.
	
	
Furthermore, as $G$ acts transitively on each such component, each regular level set is a principal orbit and the orbit space of $G$-actions on $M$ is of dimension one. Thus, $(M, g, J, f)$ is of cohomogeneity one. By Sard's theorem, the set of singular values for $f:M \mapsto \mathbb{R}$ is of measure zero in $\mathbb{R}$. Thus, by continuity, nearby connected components must be obtained from the same model $P$. Locally, the metric can be written as,     
\[g= dt^2 +g_t,~~ f=f(t).\]
 
Next, we consider cases as described in Theorem \ref{tannoclassificationalmost}.

\textbf{Case 1:} Each connected component $(P, g_t, \zeta_t, \eta_t, \Phi_t)$ is a deformation of a homogeneous Sasakian metric with constant $\Phi$-sectional curvature. Thus, $g_t$ is obtained from a standard Sasakian metric via an $a$-homothety and a $\pm b$ deformation. By fixing a background $\eta$ and $\zeta=\partial_z$ on each fiber, Theorem \ref{tannoSasa} and Lemma \ref{curvaturedeform} imply that, for $F^2 = ab$ and $H =a$,  
\begin{align*}
	g_t &= F^2(t)\pi^\ast g_{\mathbb{N}}+ H^2(t)\eta\otimes \eta,\\
	\eta (\partial_z) &= 1,~~ d\eta =\pi^\ast\omega_N.
\end{align*}

\textbf{Case 2:} $P$ is a trivial bundle over $\mathbb{N}(k)$. Thus,
\begin{align*}
g_t &= H^2(t)dz^2+ F(t)^2 \pi^{\ast} g_{\mathbb{N}}.
\end{align*}

\textbf{Case 3:} $(P, g_t)$ is a hyperbolic metric. That is, 
\begin{align*}
g_t &= H^2 dz^2+ e^{2A(t)z} g_N.
\end{align*} 	
One direction then follows from Lemmas \ref{solitonsubmer1} (for case 1 and case 2 corresponding to $q=1$ and $q=0$) and Lemma \ref{solitonsubmer2} (for case 3). \\

For the reverse direction, if the soliton is locally constructed by the ansatz \ref{ansatz} and $P$ is simply connected, then its automorphism group is the same as the group of symmetry of $(P, g_t, \eta_t, \zeta_t, \Phi_t)$ for each regular value $t$ by Prop. \ref{conversesymmetry}. For $N=\mathbb{N}(k)$ such group is of dimension $n^2$ by \cite{tanno69}. The hyperbolic case case is trival as the metric is a product. 
 
\end{proof}

The next results will pave the way to the proof of Theorems \ref{main0}.  
\begin{proposition}
	\label{main2}
	Let $(M^{2n}, g, J, f, \lambda)$ be a non-trivial almost K\"{a}hler GRS with $G$ the group of symmetry. If $\text{dim}(G)=n^2$ then either 
	\begin{enumerate} [label=(\roman*)]
		\item the soliton belongs to case (i) of Theorem \ref{main1} with $q=1$, $f$ monotonic, and each level set is connected. 
		\item the soliton splits as $(M_1, g_, J_1, f_1, \lambda)\times (\mathbb{N}, g_{\mathbb{N}}, J_{\mathbb{N}}, f_{\mathbb{N}}, \lambda)$ for  $(M_1, g_, J_1, f_1, \lambda)$ a K\"{a}hler GRS in real dimension two. 
	\end{enumerate}
	
\end{proposition}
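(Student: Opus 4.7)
The strategy is to combine Theorem~\ref{main1} with the closedness $d\omega=0$. Since $f$ is non-constant and $\dim G=n^{2}$, Theorem~\ref{main1} gives three local possibilities for $(M,g,J)$: (A) Ansatz~\ref{ansatz} with $q=1$, (B) Ansatz~\ref{ansatz} with $q=0$, or (C) a product of a line or circle with a hyperbolic space. For the ansatz cases, the identity recorded just before Lemma~\ref{almostKahler},
\[
d\omega \;=\; 2(FF'-qH)\,dt\wedge\pi^{\ast}\omega_{\mathbb{N}},
\]
immediately settles them. In case (A), $d\omega=0$ reduces to $FF'=H$, which is case (i) of the proposition pending monotonicity of $f$ and connectedness of level sets. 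In case (B), $d\omega=0$ forces $FF'=0$, so $F\equiv F_{0}>0$ is constant. The metric then splits as the orthogonal product $g=(dt^{2}+H^{2}dz^{2})\oplus F_{0}^{2}\pi^{\ast}g_{\mathbb{N}}$, and the ansatz $J$ respects this decomposition. Because $f=f(t)$, the Hessian restricted to the $\mathbb{N}$-factor vanishes, so the GRS equation there becomes $\Rc_{\mathbb{N}}=\lambda g_{\mathbb{N}}$, i.e.\ $\mathbb{N}$ is K\"{a}hler--Einstein with constant $\lambda$; the two-dimensional factor inherits a K\"{a}hler GRS with the same $\lambda$. This yields case (ii) of the proposition.

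The hard part is excluding case (C). Here $M=I\times P$ is a genuine Riemannian product with $P=\mathbb{H}^{2n-1}$ (both $H$ and $A$ in the remark after Theorem~\ref{main1} are constants). Since $G$ preserves $\partial_{t}$ and $J$, the vector $\zeta:=J\partial_{t}$ is $G$-invariant; moreover $\zeta\in TP$, because $g(J\nabla f,\nabla f)=0$ and $\nabla f=f'(t)\partial_{t}$ force $J\partial_{t}\perp\partial_{t}$. The induced $(P,g_{P},\zeta,\eta,\Phi)$ is an almost contact metric structure of maximal symmetry $n^{2}$, and by Theorem~\ref{tannoclassificationalmost}(iii) the Reeb vector $\zeta$ is Killing. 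The lifted fundamental two-form reads
\[
\omega \;=\; -\,dt\wedge\eta + \omega_{\Phi},
\]
with $\eta$ and $\omega_{\Phi}$ both $t$-independent, so $d\omega=dt\wedge d_{P}\eta+d_{P}\omega_{\Phi}$ and the almost K\"{a}hler condition forces $d_{P}\eta=0$. For a Killing vector field, $d\eta(X,Y)=2g(\nabla_{X}\zeta,Y)$, so $\nabla\zeta\equiv 0$: $\zeta$ would have to be parallel on $\mathbb{H}^{2n-1}$. But $\mathbb{H}^{2n-1}$ has nonzero constant sectional curvature and is therefore irreducible, admitting no nontrivial parallel vector field---a contradiction that eliminates case (C).

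For the remaining refinements in case (A), Lemma~\ref{solitonsubmer1} in the almost K\"{a}hler regime gives $f'=BH$ for some constant $B$. Non-triviality of the soliton excludes $B=0$, and since $H>0$ this forces $f'$ to have a fixed sign, so $f$ is strictly monotonic. Each regular level set is then a single slice $\{t_{0}\}\times P$, which is connected because $P$ is connected as one of the Tanno model spaces from Theorem~\ref{tannoclassificationalmost}.
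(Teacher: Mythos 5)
Your handling of the two ansatz cases is correct and follows essentially the paper's route: from $d\omega=2(FF'-qH)\,dt\wedge\pi^{\ast}\omega_{\mathbb{N}}$ you get $FF'=H$ when $q=1$ and $F\equiv F_{0}$ when $q=0$, and the splitting argument in the latter case matches the paper's appeal to \cite{PW09grsym}. Your derivation of monotonicity from $f'=BH$ (Lemma \ref{solitonsubmer1}) with $B\neq 0$ forced by non-triviality is an explicit version of what the paper extracts from Lemma \ref{almostKahler}, where $\varphi$ is linear in $s$; the connectedness remark is fine.

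The gap is in your exclusion of the hyperbolic case. You assert that ``by Theorem \ref{tannoclassificationalmost}(iii) the Reeb vector $\zeta$ is Killing.'' That theorem makes no such claim, and the assertion is false for the model actually at hand: by Lemma \ref{solitonsubmer2} the level sets carry $g_{P}=H^{2}dz^{2}+e^{2Az}g_{\mathbb{C}^{n-1}}$ with $\zeta$ proportional to $\partial_{z}$, and $\lie_{\partial_{z}}\bigl(e^{2Az}g_{\mathbb{C}^{n-1}}\bigr)=2Ae^{2Az}g_{\mathbb{C}^{n-1}}\neq 0$, so $\zeta$ is $G$-invariant but not Killing (the one-parameter subgroup in the $\partial_{z}$-direction acts by a translation of $z$ coupled with a dilation of $\mathbb{C}^{n-1}$, not by $\exp(t\partial_{z})$ alone). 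Moreover $\eta$ is proportional to $dz$, hence exact, so $d_{P}\eta=0$ holds automatically and carries no information; the contradiction you draw from it evaporates. The obstruction must be located elsewhere: for instance, for the product $J$ the transverse form $e^{2Az}\omega_{\mathbb{C}^{n-1}}$ is not closed, or, independently of the choice of $J$, one can use that $L_{t}\equiv 0$ forces $\He f=f''\,dt^{2}$, which is $J$-invariant only if $f''=0$, contradicting $f''=\lambda=-2(n-1)(A/H)^{2}<0$ from Lemma \ref{solitonsubmer2}. The paper itself dismisses this case in one sentence, so a correct argument here would be a genuine contribution, but the one you give does not close it.
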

\begin{proof} We continue from the proof of Theorem \ref{main1}. For the first case $q=1$, the result follows from Lemma \ref{almostKahler}. For the case $q=0$, by Lemma \ref{ricciansatz}, $F$ is constant. Thus, the soliton must splits as a Riemannian product
	\[ M_1 \times M_2.\]
By \cite[Lemma 2.1]{PW09grsym} and the discussion after Lemma \ref{solitonsubmer1}, each $(M_i, g_i, J_i. f_i, \lambda)$ is a K\"{a}hler GRS. As $(M_2, g_2, J_2)= (\mathbb{N}, g_{\mathbb{N}}, J_{\mathbb{N}})$ the result follows. Finally, for the hyperbolic case, the product metric is not almost K\"{a}hler. 


\end{proof}

For Theorem \ref{main0}, we will divide the proof into two parts corresponding roughly to $\lambda\neq 0$ and $\lambda=0$. 

\begin{proof}[Proof of Theorem \ref{main0} (part I)] Let $G$ be the largest connected group of isometries. 
	
\textit{Claim 1:} $G$ preserves $J$. 
	
\textit{Proof:}	By A. Lichnerowicz \cite{lich54}, for an irreducible K\"{a}hler manifold, $G$ preserves $J$ if $n$ is odd or if $n$ is even and $\Rc$ does not vanish. As the soliton is non-trivial, $\Rc\neq 0$. \\

Next, we give the proof in case $\lambda \neq 0$.

\textit{Claim 2:} If $\lambda\neq 0$, the group of isometries preserves $f$.
	
\textit{Proof:}	By \cite{PW09grsym}, in case of $\lambda\neq 0$, for a Killing vector field $X$, either $\lie_X f=\nabla_X f\equiv 0$ or $\nabla X\equiv 0$ and the manifold splits off a line or a circle. Since the metric is K\"{a}hler, 
	\[\nabla X\equiv 0 \rightleftarrows \nabla(JX)\equiv 0.\]
	Thus, it splits off a line/circle if and only if there is a decomposition with a flat factor with respect to the K\"{a}hler structure, which contradicts the irreducibility.\\ 

 The claims imply that $G$ is contained in the group of symmetry preserving $g, J,$ and $f$. By Theorem \ref{main1}, the dimension of $G$ is at most $n^2$ and equality implies that the metric must be constructed from ansatz \ref{ansatz} due to Prop. \ref{main2}(i). \\


		\textit {Claim 3:} If $\lambda> 0$ then $\mathbb{N}=\mathbb{CP}^{n-1}$.  
	
	\textit{Proof:} Prop. \ref{main2} gives explicit solution to the system of ODE. Since $\alpha(s)$ and $\beta(s)$ do not both approach $\infty$ as $s\rightarrow -A/2$, the metric is only complete if there is a singular orbit and one needs to smoothly compactify such an end. By taking a scaling if necessary, one assumes that the singular orbit is at $s=0=t$ and the metric is defined in a neighborhood where $s>0$. Thus, immediately, $\beta\geq 0$ if and only if $A\geq 0$.\\
	
	Assume that $\mathbb{N}\neq \mathbb{CP}^{n-1}$ then it is non-compact. Then, at $t=s=0$, one can only collapse the fiber; that is, $\alpha(0)=H(0)=0, \beta(0)=F^2(0)>0$. By Prop. \ref{smoothness}, the smoothness of the metric requires 
	\begin{align*}
		\frac{\partial \alpha}{\partial s}(0) &= 2\frac{\partial H}{\partial t}(0)= 2. 
	\end{align*}
	Evaluating the equation 
	${\lambda}(2s+A) ={k}-\dot{\alpha}-\frac{2(m-1+q^2)\alpha}{2s+A}+B\alpha$ at $s=0$ yields
	\[k-\lambda A=D= 2. \]
	For $\lambda > 0$, it implies that $k>0$, a contradiction to  $\mathbb{N}(k)\neq \mathbb{CP}^{n-1}$.\\
	
	Finally, the reverse direction follows from Claim 1, Prop. \ref{main2}, and Theorem \ref{main1}. 
	
	\end{proof} 
\begin{remark}
	By \cite{PW09grsym}, for a Killing vector field $X$, the gradient of $Xf$ is parallel. Thus, either the soliton locally splits a Riemannian factor or $Xf$ is constant. Because of our irreducibility assumption, $Xf=\text{constant}$. If $\lambda\neq 0$, the soliton identity ($\SS+|\nabla f|^2-2\lambda f= \text{constant}$) implies that $f$ must assume all real values, a contradiction to a lower bound on $\SS$ as shown in \cite{zhang09completeness}. There is also a more general argument according to \cite{wyliepersonal}. 
\end{remark}

\begin{proof}[Proof of Theorem \ref{main0} (part II)] Here we consider the case $\lambda =0$. Without loss of generality, we assume the scalar curvature function $\SS$ is non-constant (otherwise, by the maximum principle, $\Rc=0$, a contradiction). By Claim 1, $G$ is contained in the group of symmetry preserving $g, J,$ and $\SS$. By Corollary \ref{maxdim}, $\text{dim}(G)\leq n^2$ and equality implies that each connected component of a regular level, denoted by $P$, is a model space described in Theorem \ref{tannoclassificationalmost}. We argue as in Prop \ref{main2} to deduce that $P$ is a deformed standard Sasakian structure. Thus, locally, the metric must be constructed from ansatz \ref{ansatz} with $N=\mathbb{N}(k)$ and $q=1$. 
	
\textit {Claim:} If $\lambda=0$ then $\mathbb{N}=\mathbb{CP}^{n-1}$. 	

\textit{Proof:} If $f$ is $G$-invariant then we can repeat the argument as in the case $\lambda>0$. If not, then, there is a neighborhood of a point such that
\[\nabla f= f_0 \partial_t + X,\]
for non-trivial $X \perp \partial_t$. By Cor. \ref{maxdim} and Prop. \ref{conversesymmetry}, any Killing vector field generated by $G$ is perpendicular to $\partial_t$. Since $J(\nabla f)$ is a Killing, \[X\perp \text{span}\{\partial_t, \partial_z\}.\] 
By Lemma \ref{Rccomp}, the Ricci curvature is diagonalized and $\nabla \SS$ is an eigenvector. Thus, the equation $\Rc (\nabla f)= \frac{1}{2}\nabla \SS$ implies $\Rc(X, X)=0$. Applying Lemma \ref{Rccomp} again yields
\[k-2n (F')^2-2F'' F=0.\]
If $k\leq 0$ then immediately, $F''\leq 0$ and $F'=\frac{H}{F}$ is positive and non-increasing. Since $\mathbb{N}(k)$ is non-compact, one concludes that there is no singular orbit corresponding to $H\rightarrow 0$. However, as $t\rightarrow -\infty$, since $F'$ is positive and non-increasing, there must be a $t_0$ such that $F(t_0)=0$, a contradiction. Therefore, $k>0$ and $\mathbb{N}=\mathbb{CP}^{n-1}$. \\

By Cor. \ref{maxdim} and Prop. \ref{conversesymmetry}, $G$ is constructed from the automorphism group of $\mathbb{N}(k)$. Since $\mathbb{N}=\mathbb{CP}^{n-1}$, $G$ is compact and, consequently, $f$ can be modified to be $G$-invariant. 
\end{proof}

Now we prove Corollaries \ref{main3} and \ref{nondegenrate}.

\begin{proof}[Proof Corollary \ref{main3}]
If it is irreducible, Theorem \ref{main0} applies. Otherwise, we consider the De Rham decomposition of a K\"{a}hler manifold \cite[Theorem IX. 8.1]{KNvolumeII96}:
\[(M^n, g, J)= \Pi_{i=0}^k (M_i^{n_i}, g_i, J_i),\]
with $\sum_{i} n_i=n$ (complex dimensions). Here $(M_0, g_0, J_i)$ is Euclidean (if there is no such factor, we interpret $M_0$ as a point) and each $(M_i, g_i, J_i)$ is an irreducible K\"{a}hler manifold. By \cite[Lemma 2.1]{PW09grsym}, the soliton structure splits accordingly. That is,  \[(M^n, g, f, J, \lambda)= \Pi_{i=0}^k (M_i^{n_i}, g_i, f_i, J_i, \lambda).\]

By \cite[Theorem 1]{hano55} (see also \cite[Theorem VI.3.5]{KNvolumeI96}), the identity component of the isometry group of $(M, g)$ also decomposes as a product. In particular, each isometric transformation $u$ can be written as $(u_0,..., u_k)$ where each $u_i$ is an isometric transformation for $(M_i, g_i)$. Thus, it is immediate that $u$ preserves $J$ if and only if each $u_i$ does $J_i$. Therefore, the group of automorphisms-preserving both $g$ and $J$- of $(M, g, J)$ also splits as a product.     

If $(M_i, g_i, f_i, J_i, \lambda)$ is trivial then, by Tanno's theorem \cite[Theorem A]{tanno69Hermitian}, its group of automorphisms is of dimension at most $n_i(n_i+2)$. Suppose there are $r$ trivial factors. For the rest, by Theorem \ref{main0}, the group of isometry of the $j$-factor is of dimension at most $n_j^2$. Thus, the dimension of a connected component of the product group is at most,
	\[ \sum_{i=0}^r n_i(n_i+2)+ \sum_{j=r+1}^k n_j^2 \leq A(A+2)+B^2,\]
	for $A=\sum_{i=0}^r n_i$ and $B=\sum_{j=r+1}^k n_j$. As the soliton is non-trivial $A<n$. Thus, 
	\[A(A+2)+(n-A)^2= n^2-2A(n-1-A)\leq n^2.\]
	Equality happens if and only if either $A=n-1$ and there is exactly one Euclidean or K\"{a}hler-Einstein factor of complex dimension $(n-1)$ with maximal symmetry or $A=0$ (irreducible). In the former case, the other factor is a complete non-trivial K\"{a}hler GRS in complex dimension one. For $\lambda>0$, there is no non-trivial complete example in that dimension \cite{BM15}. The result then follows.
	


\end{proof}

 \begin{proof}[Proof of Corollary \ref{nondegenrate}]
 	By \cite{KNauto57}, the largest connected group of affine transformations $G$ contains of automorphisms. Non-degeneracy implies the Riemannian metric does not split off any flat factors. Thus, $f$ is $G$-invariant. The result then follows from Theorem \ref{main1} and Corollary \ref{main3}. For the reverse direction, the non-degeneracy follows as the Ricci curvature is non-singular at some point \cite{KNauto57}.  
 \end{proof}

\section{Appendix}
\subsection{Convention}
Here are our conventions:
\begin{itemize}
	
	\item $\lie$ denotes the Lie derivative. 
	\item The convention of exterior derivative, for an $m$-form $\alpha$,
	\begin{align*}(m+1)d\alpha(X_0,....X_m) &= \sum_{i}(-1)^i X_i (\alpha(X_0,....,\hat{X_i},...X_m))\\
		&+\sum_{i<j}(-1)^{i+j}\alpha([X_i, X_j], X_0,..., \hat{X_i},...,\hat{X_j},...X_m).\end{align*}
	Consequently,
	\[(dx^{i_1}\wedge... \wedge dx^{i_k})(\partial_{x_i},... \partial_{x_k})=\frac{1}{k!}. \]
	\begin{remark}
		Our convention agrees with \cite{chowluni} but differs from \cite{besse} by a scaling. 
	\end{remark}
\item 	The interior product for a $m$-form is defined as
\[(\mathfrak{i}_X \alpha (Y_1,..., Y_m))=m \alpha(X, Y_1,..., Y_m).\]
The following identity is the so-called Cartan's formula for a differential form
\[\lie_X \alpha=(d \circ \mathfrak{i}_X+\mathfrak{i}_X\circ d)\alpha.\]

	\item On a Riemannian manifold $(M, g)$, there is a unique Levi-Civita connection $\nabla: TM\times C^\infty(TM)\mapsto C^\infty(TM)$. The connection induces a Riemannian curvature via the covariant second derivative: 
	\[\nabla^2_{X, Y}= \nabla_X\nabla_Y-\nabla_{\nabla_X Y}.\]
	The Riemann curvature $(3,1)$ tensor and $(4,0)$ tensor are defined as follows,
	\begin{align*}
		\RR(X,Y,Z)&=\nabla^2_{X,Y}Z -\nabla^2_{Y,X}Z\\
		\RR(X,Y,Z,W)&=g(\nabla^2_{X,Y}Z -\nabla^2_{Y,X}Z, W).
	\end{align*}

	\begin{remark}
		Our sign convention agrees with \cite{chowluni, BGbookSasakian08}. Our $(3,1)$ curvature tensor differs from one of \cite{besse} by a sign. 
	\end{remark}
	Furthermore, the curvature can be seen as an operator on the space of two forms. For an orthonormal basis $\{e_i\}_i$ and any $2$-form $\alpha$, \[\RR(\alpha)(e_i, e_j)=\sum_{k<l}\RR(e_i, e_j, e_k, e_l)\alpha(e_k, e_l).\]
	Consequently, due to our exterior derivative convention,  
		\[\RR(X\wedge Y) (Z, W) =\frac{1}{2}\RR(X, Y, Z, W).\]
	The sectional curvature and Ricci curvature are defined as follows, 
	\begin{align*} K(X, Y) &=\RR(X, Y, Y, X),\\ 
		\Rc_{ik} &=\sum_j \RR_{ijjk}.
	\end{align*}
	
	
\end{itemize}
\subsection{Submersion}
A differentiable map between smooth manifolds $\pi: P\mapsto N$ is a submersion if the pushforward of the tangent space at each point is surjective. That is, for $p\in P$, $\pi_\ast (T_xP)=T_{\pi(p)}N$. A Riemannian submersion is a submersion between Riemannian manifolds such that the differential above is a linear isometry. 

We consider a Riemannian submersion $\pi: (P^{2n+1}, g_p)\mapsto (N^{2n}, g_N)$ such that each fiber is a geodesic line or circle with tangential vector field $\zeta$ such that $g(\zeta, \zeta)=1$. The submersion naturally decomposes $TP$ into vertical and horizontal distributions. Let $(\cdot)^{\mathcal{H}}$ and $(\cdot)^{\mathcal{V}}$ denote the horizontal and vertical parts, respectively, of a vector field on $P$. The vertical subspace consists of multiples of $\zeta$. Furthermore, for each vector field on $N$ there is a unique horizontal vector field on $P$ such that they are $\pi$-related. We'll collect useful lemmas whose proofs can be found in \cite{pe06book, besse, BGbookSasakian08} or a straightforward calculation.  

\begin{lemma} For horizontal vector fields ${X}$ and ${Y}$:  
	\begin{enumerate} [label=\roman*)]
		\item $[\zeta, {X}]$ is vertical,
		\item $g([{X}, {Y}], \zeta) =2g(\nabla_{{X}}{Y}, \zeta)= 2g(\nabla_{{Y}}\zeta, {X})=-2g(\nabla_{\zeta}{X}, {Y}),$
		\item $\zeta$ is a Killing vector field. 
	\end{enumerate}
	
\end{lemma}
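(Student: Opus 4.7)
The plan is to prove the three statements in order, since (i) gives the structural fact that makes the Koszul computation in (ii) collapse, and (ii) in turn immediately yields (iii). Throughout, I read ``horizontal vector field'' as \emph{basic} horizontal (the horizontal lift of a vector field on $N$), which is the standard convention here.

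For (i), I would use the naturality of the Lie bracket under $\pi$-relatedness. Since $\pi_\ast \zeta = 0$ and $X$ is $\pi$-related to $\pi_\ast X$, we have $\pi_\ast[\zeta, X] = [\pi_\ast \zeta, \pi_\ast X] = 0$, so $[\zeta, X] \in \ker d\pi$, which is precisely the vertical distribution.

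For (ii), the natural tool is Koszul's formula applied to $2g(\nabla_X \zeta, Y)$. The six Koszul terms simplify sharply because: $g(X,\zeta) = g(Y,\zeta) = 0$ kills two of the three ``derivative of inner product'' terms; the third, $\zeta g(X,Y)$, vanishes because $g(X,Y) = g_N(\pi_\ast X, \pi_\ast Y)\circ \pi$ is constant on fibers (this is exactly the Riemannian submersion property for basic vector fields); finally, $[X,\zeta]$ and $[\zeta,Y]$ are vertical by (i), hence orthogonal to horizontal $Y$ and $X$. Only $g([Y,X],\zeta)$ survives, so
\[
2g(\nabla_X \zeta, Y) \;=\; -\,g([X,Y],\zeta).
\]
Differentiating $g(Y,\zeta)=0$ along $X$ gives $g(\nabla_X Y,\zeta)=-g(Y,\nabla_X \zeta)$, which converts this into $g([X,Y],\zeta)=2g(\nabla_X Y,\zeta)$. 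The symmetric identities $g([X,Y],\zeta)=2g(\nabla_Y \zeta,X)$ and $g([X,Y],\zeta)=-2g(\nabla_\zeta X, Y)$ follow from applying Koszul to the other two arguments and invoking the same three facts; in particular the last one uses that $\zeta g(X,Y)=0$ once more.

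For (iii), rewriting (ii) as $g(\nabla_X \zeta, Y) = -\tfrac12 g([X,Y],\zeta)$ and adding the same with $X, Y$ swapped gives
\[
(\mathcal{L}_\zeta g)(X,Y) \;=\; g(\nabla_X \zeta, Y) + g(\nabla_Y \zeta, X) \;=\; -\tfrac12\bigl(g([X,Y],\zeta) + g([Y,X],\zeta)\bigr) \;=\; 0
\]
for horizontal $X,Y$. The remaining components are immediate: $(\mathcal{L}_\zeta g)(\zeta,\zeta) = 2g(\nabla_\zeta \zeta,\zeta) = 0$ since the fibers are unit-speed geodesics (so $\nabla_\zeta \zeta = 0$), and $(\mathcal{L}_\zeta g)(X,\zeta) = g(\nabla_X \zeta,\zeta) + g(\nabla_\zeta \zeta, X) = \tfrac12 X g(\zeta,\zeta) + 0 = 0$. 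Hence $\mathcal{L}_\zeta g = 0$.

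The proof is essentially bookkeeping and presents no real obstacle; the only point requiring care is being explicit that ``horizontal'' means basic horizontal in (i), so that $\pi_\ast X$ makes sense globally, and that the Riemannian submersion condition is what delivers $\zeta g(X,Y) = 0$ — these are exactly the two inputs that make the Koszul terms collapse to the asserted identities.
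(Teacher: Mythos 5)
Your proof is correct. The paper itself does not prove this lemma --- it defers to the references and to ``a straightforward calculation'' --- and your Koszul-formula computation, together with the $\pi$-relatedness argument for (i) and the symmetrization for (iii), is exactly that standard calculation; your explicit remark that (i) requires the horizontal fields to be \emph{basic} (the identities in (ii) then extend to all horizontal fields by tensoriality) is an appropriate and correct precision of the paper's wording.
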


The curvature of a submersion can be computed via B. O'Neill's $A$ and $T$ tensors \cite{ONeill66}. Since each fiber is totally geodesic, $T\equiv 0$ and only $A$ is non-trivial.  One recalls 
\begin{align*}
	A_X \zeta = (\nabla_X \zeta)^{\mathcal{H}}, &~~A_X Y = (\nabla_X Y)^{\mathcal{V}}.
\end{align*} 
For horizontal vector fields $X, T, Z, W$,
\begin{align*}
	\RR_P({X}, {Y}, {Z}, {W}) &= \RR_{N}(X, Y, Z, W) +2g(A_X Y, A_Z W)+g(A_X Z, A_Y W)-g(A_X W, A_Y Z),\\
	\RR_P({X}, \zeta, {Z}, \zeta) &= -g(A_X \zeta, A_Z \zeta).
\end{align*}
\begin{remark}
	These formulas differ from ones of \cite[Chapter 9]{besse} by a sign convention. 
\end{remark}

Consequently, there are corresponding identities for the sectional curvature and Ricci curvature. For orthonormal horizontal vectors $X, Y$
\begin{align*}
	K_P(X, Y) = K_{N}(X, Y)-3 |A_X Y|^2; &~~K_P(X, \zeta) = |A_X \zeta|^2, \\
	\Rc_P(X, Y) = \Rc_{N}(X, Y)-2g(A_X, A_Y);~~\Rc(X, \zeta) = 0; &~~ \Rc(\zeta, \zeta) = g(A\zeta, A\zeta).
\end{align*}
Here, as $\{E_i\}_{i=1}^{2n}$ denotes a local orthonormal frame for the horizontal distribution, 
\begin{align*}
	g(A_X, A_Y) &= \sum_i g(A_X E_i, A_Y E_i)= g(A_X \zeta, A_Y \zeta),\\
	g(A\zeta, A\zeta) &=\sum_i g(A_{E_i}\zeta, A_{E_i}\zeta).
\end{align*}

Next, we restrict to the submersion given by ansatz \ref{ansatz}. It is immediate that $\zeta= \frac{1}{H} \partial_z$ is a Killing vector field. 
In this situation, tensor $A$ can be computed immediately. 
\begin{lemma} \label{computeA}
	For horizontal vector fields $X$ and $Y$
	\begin{align*}
		2 H d\eta(X, Y) &= -g([{X}, {Y}], \zeta)= \frac{Hq}{F^2} g (X, JY),\\
		A_X Y &=-\frac{Hq}{F^2}g(X, JY)\zeta\\
		A_X\zeta &=-\frac{Hq}{F^2} JX.
	\end{align*}
\end{lemma}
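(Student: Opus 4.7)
The plan is to establish the three claims in sequence, with each building on the previous, using only the exterior derivative convention of the appendix, the explicit structure of Ansatz \ref{ansatz}, and the general O'Neill-type identities collected in the preceding submersion lemma.

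First I would prove the scalar identity $2Hd\eta(X,Y) = -g([X,Y],\zeta)$. Since $X$ and $Y$ are horizontal with respect to $\pi$, the relation $g(X,\partial_z) = H^2 \eta(X)$ forces $\eta(X) = \eta(Y) = 0$, so the convention $2d\eta(X,Y) = X\eta(Y) - Y\eta(X) - \eta([X,Y])$ collapses to $2d\eta(X,Y) = -\eta([X,Y])$. Writing the vertical component of $[X,Y]$ as $a\partial_z$ for some function $a$, one reads off $\eta([X,Y]) = a$ and, using $\zeta = H^{-1}\partial_z$ and $g(\partial_z,\partial_z) = H^2$, computes $g([X,Y],\zeta) = aH$. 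Multiplying the exterior derivative identity through by $H$ produces the first equality.

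For the second equality, I would combine $\eta = dz + q\pi^{\ast}\alpha$ with $d\alpha = \omega_N$ to obtain $d\eta = q\pi^{\ast}\omega_N$. The explicit formula for $J$ in the ansatz implies that horizontal $Y\in TP$ satisfies $JY = \pi^{\ast}J_N(Y)$ (the horizontal lift of $J_N\pi_{\ast}Y$), so combined with $g|_{\mathcal{H}} = F^2\pi^{\ast}g_N$ one gets
\[
g(X,JY) = F^2 g_N(\pi_{\ast}X, J_N\pi_{\ast}Y) = F^2\pi^{\ast}\omega_N(X,Y),
\]
so that $d\eta(X,Y) = (q/F^2)g(X,JY)$, closing the first line.

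The two tensor identities then fall out mechanically. Since the vertical distribution is spanned by $\zeta$, the O'Neill definition gives $A_X Y = (\nabla_X Y)^{\mathcal{V}} = g(\nabla_X Y,\zeta)\zeta$, and the preceding lemma's identity $g([X,Y],\zeta) = 2g(\nabla_X Y,\zeta)$ converts this into $A_X Y = -Hd\eta(X,Y)\,\zeta$; substituting the scalar identity gives the claimed expression. For $A_X\zeta$, I would observe that $g(\zeta,Y) = 0$ for horizontal $Y$, so metric compatibility yields $g(\nabla_X\zeta,Y) = -g(\zeta,\nabla_X Y) = -g(\zeta, A_X Y)$. Inserting the formula for $A_X Y$ and invoking the antisymmetry $g(X,JY) = -g(JX,Y)$ gives $g(A_X\zeta,Y)$ as a multiple of $g(JX,Y)$ for all horizontal $Y$; since $JX$ is itself horizontal for horizontal $X\in TP$, the result $A_X\zeta = -(Hq/F^2)JX$ follows.

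There is no substantive obstacle beyond careful bookkeeping. The one subtlety is cleanly distinguishing the fiber generator $\partial_z$, for which $\eta(\partial_z) = 1$, from its $g$-unit normalization $\zeta = H^{-1}\partial_z$; this mismatch is what produces the factor $H$ in the first identity and propagates consistently through the $A$-tensor formulas.
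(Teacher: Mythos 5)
Your proof is correct and follows essentially the same route as the paper's: the exterior-derivative convention gives $2Hd\eta(X,Y)=-g([X,Y],\zeta)$, the ansatz gives $d\eta=q\,\pi^{\ast}\omega_N=(q/F^2)\,g(\cdot,J\cdot)$ on horizontal vectors, and the two $A$-tensor identities follow from $g([X,Y],\zeta)=2g(\nabla_XY,\zeta)$ and metric compatibility exactly as in the paper's own argument. The only remark worth adding is that both your computation and the paper's actually yield $-g([X,Y],\zeta)=\tfrac{2Hq}{F^2}g(X,JY)$, so the middle chain of equalities in the lemma's first line is off by a factor of $2$ as printed; this slip is in the statement rather than in your proof and does not affect the formulas for $A_XY$ and $A_X\zeta$.
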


\begin{lemma}
	\label{submersioncurvature}
	The sectional and Ricci curvature of $(P, g)$ are given by, for orthonormal horizontal vectors $X, Y$  
	\begin{align*}
		K(X, Y) &= \frac{1}{F^2}K_N(FX, FY)-3\frac{H^2 q^2}{F^4} g(X, JY)^2,\\
		K(X, \zeta) &= \frac{H^2 q^2}{F^4},\\
		\Rc(X, Y) &= \Rc_N (X, Y)-2\frac{H^2q^2}{F^4}g(X, Y)\\\
		\Rc(\zeta, \zeta) &=\frac{H^2q^2}{F^4}2m.
	\end{align*}
\end{lemma}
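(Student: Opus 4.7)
The plan is to reduce the lemma to a direct substitution into B.~O'Neill's submersion formulas. The Ansatz presents $(P, g_P)$ as a Riemannian submersion $\pi:(P, g_P)\to (N, F^2 g_N)$ with totally geodesic fibers tangent to $\zeta=\partial_z/H$, so the $T$-tensor vanishes and the only non-trivial piece is the $A$-tensor, which has already been computed in Lemma~\ref{computeA} as
\[
A_X Y=-\tfrac{Hq}{F^2}g(X,JY)\zeta,\qquad A_X\zeta=-\tfrac{Hq}{F^2}JX,
\]
for horizontal $X,Y$. The key observation used throughout is that $g_P$ restricted to the horizontal distribution equals $F^2\pi^\ast g_N$ and is Hermitian, so in particular $g_P(JX,JY)=g_P(X,Y)$; moreover, the Ricci tensor as a $(0,2)$-tensor is invariant under the constant rescaling $g_N\mapsto F^2 g_N$ of the base (since the Christoffel symbols are unchanged and the inverse-metric contraction absorbs the scale).

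For the first identity, I would apply the O'Neill sectional curvature formula on horizontal orthonormal $X,Y$:
\[
K_P(X,Y)=K_{F^2 g_N}(\pi_\ast X,\pi_\ast Y)-3|A_X Y|^2.
\]
The sectional curvature of a rescaled metric satisfies $K_{F^2 g_N}(V,W)=F^{-2}K_{g_N}(FV,FW)$ when $V,W$ are orthonormal for $F^2 g_N$, which yields the $\frac{1}{F^2}K_N(FX,FY)$ term. Squaring the expression for $A_X Y$ gives $|A_XY|^2=\frac{H^2q^2}{F^4}g(X,JY)^2$, and combining produces the stated formula. For $K_P(X,\zeta)$ with $X$ horizontal unit, O'Neill gives $K_P(X,\zeta)=|A_X\zeta|^2=\frac{H^2q^2}{F^4}g_P(JX,JX)=\frac{H^2q^2}{F^4}$ using Hermiticity.

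For the Ricci identities, I would invoke the O'Neill Ricci formulas already recorded in the excerpt, namely $\Rc_P(X,Y)=\Rc_N(X,Y)-2g(A_X,A_Y)$, $\Rc_P(X,\zeta)=0$, and $\Rc_P(\zeta,\zeta)=g(A\zeta,A\zeta)$. Using the identity $g(A_X,A_Y)=g(A_X\zeta,A_Y\zeta)$ together with Hermiticity,
\[
g(A_X\zeta,A_Y\zeta)=\tfrac{H^2q^2}{F^4}g_P(JX,JY)=\tfrac{H^2q^2}{F^4}g(X,Y),
\]
which, combined with the scale-invariance of $\Rc_N$, yields the horizontal Ricci identity. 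Summing $g(A_{E_i}\zeta,A_{E_i}\zeta)=\frac{H^2q^2}{F^4}$ over an orthonormal horizontal frame $\{E_i\}_{i=1}^{2m}$ gives $\Rc_P(\zeta,\zeta)=2m\frac{H^2q^2}{F^4}$.

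The only subtlety, and the thing to state carefully rather than compute blindly, is the bookkeeping between the base metrics $g_N$ and $F^2 g_N$: sectional curvature picks up a factor $F^{-2}$, whereas Ricci as a $(0,2)$-tensor does not. Once that is fixed, the rest is direct substitution of the $A$-tensor formulas of Lemma~\ref{computeA} into the O'Neill identities recalled earlier in the section.
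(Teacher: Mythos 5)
Your proposal is correct and follows essentially the same route as the paper: the paper also derives the lemma by substituting the $A$-tensor formulas of Lemma \ref{computeA} into the O'Neill curvature and Ricci identities recalled in the appendix, using $g(A_X,A_Y)=g(A_X\zeta,A_Y\zeta)$ and summing over a horizontal frame for the $\Rc(\zeta,\zeta)$ term. Your explicit remark on the scaling bookkeeping between $g_N$ and $F^2g_N$ (sectional curvature scales, Ricci as a $(0,2)$-tensor does not) is a correct and worthwhile clarification that the paper leaves implicit.
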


\def\cprime{$'$}
\bibliographystyle{plain}
\bibliography{bioMorse}

\def\cprime{$'$}
\begin{thebibliography}{10}

\bibitem{BCCD22KahlerRicci}
Richard~H Bamler, Charles Cifarelli, Ronan~J Conlon, and Alix Deruelle.
\newblock A new complete two-dimensional shrinking gradient
  {K}\"{a}hler-{R}icci soliton.
\newblock {\em Geom. Funct. Anal.}, 34:377--392, 2024.

\bibitem{BM15}
Jacob Bernstein and Thomas Mettler.
\newblock Two-dimensional gradient {R}icci solitons revisited.
\newblock {\em Int. Math. Res. Not. IMRN}, (1):78--98, 2015.

\bibitem{besse}
Arthur~L. Besse.
\newblock {\em Einstein manifolds}, volume~10 of {\em Ergebnisse der Mathematik
  und ihrer Grenzgebiete (3) [Results in Mathematics and Related Areas (3)]}.
\newblock Springer-Verlag, Berlin, 1987.

\bibitem{bohmwilking}
Christoph B{\"o}hm and Burkhard Wilking.
\newblock Manifolds with positive curvature operators are space forms.
\newblock {\em Ann. of Math. (2)}, 167(3):1079--1097, 2008.

\bibitem{BGbookSasakian08}
Charles~P. Boyer and Krzysztof Galicki.
\newblock {\em Sasakian geometry}.
\newblock Oxford Mathematical Monographs. Oxford University Press, Oxford,
  2008.

\bibitem{b12rot}
Simon Brendle.
\newblock Rotational symmetry of self-similar solutions to the {R}icci flow.
\newblock {\em Invent. Math.}, pages 1--34, 2012.

\bibitem{brendle14rotahigh}
Simon Brendle.
\newblock Rotational symmetry of {R}icci solitons in higher dimensions.
\newblock {\em J. Differential Geom.}, 97(2):191--214, 2014.

\bibitem{bs091}
Simon Brendle and Richard Schoen.
\newblock Manifolds with {$1/4$}-pinched curvature are space forms.
\newblock {\em J. Amer. Math. Soc.}, 22(1):287--307, 2009.

\bibitem{bs072}
Simon Brendle and Richard~M. Schoen.
\newblock Classification of manifolds with weakly {$1/4$}-pinched curvatures.
\newblock {\em Acta Math.}, 200(1):1--13, 2008.

\bibitem{BDW15}
M.~Buzano, A.~S. Dancer, and M.~Wang.
\newblock A family of steady {R}icci solitons and {R}icci flat metrics.
\newblock {\em Comm. Anal. Geom.}, 23(3):611--638, 2015.

\bibitem{CS2018classification}
Jacob Cable and Hendrik S{\"u}{\ss}.
\newblock On the classification of {K}{\"a}hler--{R}icci solitons on gorenstein
  del pezzo surfaces.
\newblock {\em European Journal of Mathematics}, 4(1):137--161, 2018.

\bibitem{caohd96}
Huai-Dong Cao.
\newblock Existence of gradient {K}\"ahler-{R}icci solitons.
\newblock In {\em Elliptic and parabolic methods in geometry (Minneapolis, MN,
  1994)}, pages 1--16. A K Peters, Wellesley, MA, 1996.

\bibitem{caohd97limits}
Huai-Dong Cao.
\newblock Limits of solutions to the {K}\"{a}hler-{R}icci flow.
\newblock {\em J. Differential Geom.}, 45(2):257--272, 1997.

\bibitem{caohd09}
Huai-Dong Cao.
\newblock Recent progress on {R}icci solitons.
\newblock {\em Adv. Lect. Math.}, 11:1--38, 2009.

\bibitem{caozhou10}
Huai-Dong Cao and Detang Zhou.
\newblock On complete gradient shrinking {R}icci solitons.
\newblock {\em J. Differential Geom.}, 85(2):175--185, 2010.

\bibitem{caotran1}
Xiaodong Cao and Hung Tran.
\newblock The {W}eyl tensor of gradient {R}icci solitons.
\newblock {\em Geom. Topol.}, 20(1):389--436, 2016.

\bibitem{CV96}
Thierry Chave and Galliano Valent.
\newblock On a class of compact and non-compact quasi-{E}instein metrics and
  their renormalizability properties.
\newblock {\em Nuclear Phys. B}, 478(3):758--778, 1996.

\bibitem{CZ12Kahler}
Qiang Chen and Meng Zhu.
\newblock On rigidity of gradient {K}\"{a}hler-{R}icci solitons with harmonic
  {B}ochner tensor.
\newblock {\em Proc. Amer. Math. Soc.}, 140(11):4017--4025, 2012.

\bibitem{CF16conical}
Otis Chodosh and Frederick Tsz-Ho Fong.
\newblock Rotational symmetry of conical {K}\"{a}hler-{R}icci solitons.
\newblock {\em Math. Ann.}, 364(3-4):777--792, 2016.

\bibitem{chow}
Bennett Chow.
\newblock The {R}icci flow on the $2$-sphere.
\newblock {\em J. Differential Geom.}, 33(2):325--334, 1991.

\bibitem{chowluni}
Bennett Chow, Peng Lu, and Lei Ni.
\newblock {\em Hamilton's {R}icci flow}, volume~77 of {\em Graduate Studies in
  Mathematics}.
\newblock American Mathematical Society, Providence, RI, 2006.

\bibitem{CCD22finite}
Charles Cifarelli, Ronan~J Conlon, and Alix Deruelle.
\newblock On finite time type {I} singularities of the {K}\"{a}hler-{R}icci
  flow on compact {K}\"{a}hler surfaces.
\newblock {\em J. Eur. Math. Soc.}, to appear.

\bibitem{CD2020expanding}
Ronan~J Conlon and Alix Deruelle.
\newblock Expanding {K}{\"a}hler--{R}icci solitons coming out of {K}{\"a}hler
  cones.
\newblock {\em Journal of Differential Geometry}, 115(2):303--365, 2020.

\bibitem{CDSexpandshriking19}
Ronan~J Conlon, Alix Deruelle, and Song Sun.
\newblock Classification results for expanding and shrinking gradient {K}\"
  {a}hler-{R}icci solitons.
\newblock {\em Geom. Topol.}, 28:267--351, 2024.

\bibitem{DW11coho}
Andrew~S. Dancer and McKenzie~Y. Wang.
\newblock On {R}icci solitons of cohomogeneity one.
\newblock {\em Ann. Global Anal. Geom.}, 39(3):259--292, 2011.

\bibitem{DZ2020rigidity}
Yuxing Deng and Xiaohua Zhu.
\newblock Rigidity of {$\kappa$}-noncollapsed steady {K}\"{a}hler-{R}icci
  solitons.
\newblock {\em Math. Ann.}, 377(1-2):847--861, 2020.

\bibitem{EW00ivp}
J.-H. Eschenburg and McKenzie~Y. Wang.
\newblock The initial value problem for cohomogeneity one {E}instein metrics.
\newblock {\em J. Geom. Anal.}, 10(1):109--137, 2000.

\bibitem{fik03}
Mikhail Feldman, Tom Ilmanen, and Dan Knopf.
\newblock Rotationally symmetric shrinking and expanding gradient
  {K}\"{a}hler-{R}icci solitons.
\newblock {\em J. Differential Geom.}, 65(2):169--209, 2003.

\bibitem{H3}
Richard~S. Hamilton.
\newblock Three-manifolds with positive {R}icci curvature.
\newblock {\em J. Differential Geom.}, 17(2):255--306, 1982.

\bibitem{Hsurface}
Richard~S. Hamilton.
\newblock The {R}icci flow on surfaces.
\newblock In {\em Mathematics and general relativity (Santa Cruz, CA, 1986)},
  pages 237--262. Amer. Math. Soc., Providence, RI, 1988.

\bibitem{hano55}
Jun-ichi Hano.
\newblock On affine transformations of a {R}iemannian manifold.
\newblock {\em Nagoya Math. J.}, 9:99--109, 1955.

\bibitem{KW74curv}
Jerry~L. Kazdan and F.~W. Warner.
\newblock Curvature functions for open {$2$}-manifolds.
\newblock {\em Ann. of Math. (2)}, 99:203--219, 1974.

\bibitem{kobayashi95}
Shoshichi Kobayashi.
\newblock {\em Transformation groups in differential geometry}.
\newblock Classics in Mathematics. Springer-Verlag, Berlin, 1995.
\newblock Reprint of the 1972 edition.

\bibitem{KNauto57}
Shoshichi Kobayashi and Katsumi Nomizu.
\newblock On automorphisms of a {K}\"{a}hlerian structure.
\newblock {\em Nagoya Math. J.}, 11:115--124, 1957.

\bibitem{KNvolumeI96}
Shoshichi Kobayashi and Katsumi Nomizu.
\newblock {\em Foundations of differential geometry. {V}ol. {I}}.
\newblock Wiley Classics Library. John Wiley \& Sons, Inc., New York, 1996.
\newblock Reprint of the 1963 original, A Wiley-Interscience Publication.

\bibitem{KNvolumeII96}
Shoshichi Kobayashi and Katsumi Nomizu.
\newblock {\em Foundations of differential geometry. {V}ol. {II}}.
\newblock Wiley Classics Library. John Wiley \& Sons, Inc., New York, 1996.
\newblock Reprint of the 1969 original, A Wiley-Interscience Publication.

\bibitem{koi90}
Norihito Koiso.
\newblock On rotationally symmetric {H}amilton's equation for
  {K}\"ahler-{E}instein metrics.
\newblock In {\em Recent topics in differential and analytic geometry},
  volume~18 of {\em Adv. Stud. Pure Math.}, pages 327--337. Academic Press,
  Boston, MA, 1990.

\bibitem{Kotschwar18Kahlercone}
Brett Kotschwar.
\newblock K\"{a}hlerity of shrinking gradient {R}icci solitons asymptotic to
  {K}\"{a}hler cones.
\newblock {\em J. Geom. Anal.}, 28(3):2609--2623, 2018.

\bibitem{LNW16fourPIC}
Xiaolong Li, Lei Ni, and Kui Wang.
\newblock Four-dimensional gradient shrinking solitons with positive isotropic
  curvature.
\newblock {\em Int. Math. Res. Not. IMRN}, (3):949--959, 2018.

\bibitem{LW23}
Yu~Li and Bing Wang.
\newblock On {K}\"{a}hler {R}icci shrinker surfaces.
\newblock {\em arXiv preprint arXiv:2301.09784}, 2023.

\bibitem{lich54}
Andr\'{e} Lichnerowicz.
\newblock Sur les groupes d'automorphismes de certaines vari\'{e}t\'{e}s
  {K}\"{a}hleriennes.
\newblock {\em C. R. Acad. Sci. Paris}, 239:1344--1346, 1954.

\bibitem{munse09}
Ovidiu Munteanu and Natasa Sesum.
\newblock On gradient {R}icci solitons.
\newblock {\em J. Geom. Anal.}, 23(2):539--561, 2013.

\bibitem{MW15topo}
Ovidiu Munteanu and Jiaping Wang.
\newblock Topology of {K}\"{a}hler {R}icci solitons.
\newblock {\em J. Differential Geom.}, 100(1):109--128, 2015.

\bibitem{MW17compact}
Ovidiu Munteanu and Jiaping Wang.
\newblock Positively curved shrinking {R}icci solitons are compact.
\newblock {\em J. Differential Geom.}, 106(3):499--505, 2017.

\bibitem{naber07}
Aaron Naber.
\newblock Noncompact shrinking four solitons with nonnegative curvature.
\newblock {\em Journal f{\"u}r die reine und angewandte Mathematik (Crelles
  Journal)}, 2010(645):125--153, 2010.

\bibitem{NN57}
A.~Newlander and L.~Nirenberg.
\newblock Complex analytic coordinates in almost complex manifolds.
\newblock {\em Ann. of Math. (2)}, 65:391--404, 1957.

\bibitem{ONeill66}
Barrett O'Neill.
\newblock The fundamental equations of a submersion.
\newblock {\em Michigan Math. J.}, 13:459--469, 1966.

\bibitem{PTV99quasi}
Henrik Pedersen, Christina T\o~nnesen Friedman, and Galliano Valent.
\newblock Quasi-{E}instein {K}\"{a}hler metrics.
\newblock {\em Lett. Math. Phys.}, 50(3):229--241, 1999.

\bibitem{perelman1}
Grisha Perelman.
\newblock The entropy formula for the {R}icci flow and its geometric
  applications.
\newblock {\em arXiv:math.DG/0211159}, 2002.

\bibitem{perelman3}
Grisha Perelman.
\newblock Finite extinction time for the solutions to the {R}icci flow on
  certain three-manifolds.
\newblock {\em arXiv:math.DG/0307245}, 2003.

\bibitem{perelman2}
Grisha Perelman.
\newblock Ricci flow with surgery on three-manifolds.
\newblock {\em arXiv:math.DG/0303109}, 2003.

\bibitem{pe06book}
Peter Petersen.
\newblock {\em Riemannian geometry}, volume 171.
\newblock Springer, 2006.

\bibitem{PW09grsym}
Peter Petersen and William Wylie.
\newblock On gradient {R}icci solitons with symmetry.
\newblock {\em Proc. Amer. Math. Soc.}, 137(6):2085--2092, 2009.

\bibitem{pewy10}
Peter Petersen and William Wylie.
\newblock On the classification of gradient {R}icci solitons.
\newblock {\em Geom. Topol.}, 14(4):2277--2300, 2010.

\bibitem{schoen95conformal}
R.~Schoen.
\newblock On the conformal and {CR} automorphism groups.
\newblock {\em Geom. Funct. Anal.}, 5(2):464--481, 1995.

\bibitem{tanno69}
Sh\^{u}kichi Tanno.
\newblock The automorphism groups of almost contact {R}iemannian manifolds.
\newblock {\em Tohoku Math. J. (2)}, 21:21--38, 1969.

\bibitem{tanno69Hermitian}
Sh\^{u}kichi Tanno.
\newblock The automorphism groups of almost {H}ermitian manifolds.
\newblock {\em Trans. Amer. Math. Soc.}, 137:269--275, 1969.

\bibitem{tanno69sasa}
Sh\^{u}kichi Tanno.
\newblock Sasakian manifolds with constant {$\phi$}-holomorphic sectional
  curvature.
\newblock {\em Tohoku Math. J. (2)}, 21:501--507, 1969.

\bibitem{tian1}
Gang Tian and Xiaohua Zhu.
\newblock Uniqueness of {K}\"ahler-{R}icci solitons.
\newblock {\em Acta Math.}, 184(2):271--305, 2000.

\bibitem{tubook11}
Loring~W. Tu.
\newblock {\em An introduction to manifolds}.
\newblock Universitext. Springer, New York, second edition, 2011.

\bibitem{WZ04toric}
Xu-Jia Wang and Xiaohua Zhu.
\newblock K\"{a}hler-{R}icci solitons on toric manifolds with positive first
  {C}hern class.
\newblock {\em Adv. Math.}, 188(1):87--103, 2004.

\bibitem{wyliepersonal}
William Wylie.
\newblock Personal communication.

\bibitem{zhang09completeness}
Zhu-Hong Zhang.
\newblock On the completeness of gradient {R}icci solitons.
\newblock {\em Proc. Amer. Math. Soc.}, 137(8):2755--2759, 2009.

\end{thebibliography}

\end{document}